\theoremstyle{plain}
\newtheorem{thm}{Theorem}[section]
\newtheorem{cor}[thm]{Corollary}
\newtheorem{lem}[thm]{Lemma}
\newtheorem{prop}[thm]{Proposition}
\newtheorem{defin}[thm]{Definition}
\theoremstyle{definition}
\newtheorem{rmk}[thm]{Remark}
\def\enne{\mathbb{N}}
\def\zeta{\mathbb{Z}}
\def\erre{\mathbb{R}}
\def\P{\mathbb{P}}
\def\E{\mathop{{}\mathbb{E}}}
\def\cL{\mathscr{L}}
\def\cF{\mathscr{F}}
\def\eps{\varepsilon}
\def\beq{\begin{equation}}
\def\eeq{\end{equation}}
\def\to{\rightarrow}
\def\wto{\rightharpoonup}
\def\wstarto{\stackrel{*}{\rightharpoonup}}
\def\embed{\hookrightarrow}
\def\norm #1{\left\|#1\right\|}
\def\sp #1#2{\left<#1,#2\right>}
\newcommand\ip\sp
\title{\huge\rm The stochastic viscous Cahn-Hilliard equation:
			well-posedness, regularity and vanishing viscosity limit
			\footnote{
			This paper was 
			funded by Vienna Science and Technology Fund (WWTF) 
			through Project MA14-009.}
		\\[.5cm]}
\author{{\large\sc Luca Scarpa}\\
		{\small Faculty of Mathematics, University of Vienna}\\
		{\small Oskar-Morgenstern-Platz 1, 1090 Vienna, Austria}\\
		{\small E-mail: \texttt{luca.scarpa@univie.ac.at}}
		}
\date{}
\begin{document}

\maketitle  

\begin{abstract}
  Well-posedness is proved for the stochastic viscous Cahn-Hilliard equation
  with homogeneous Neumann boundary conditions and Wiener multiplicative noise.
  The double-well potential 
  is allowed to have any growth at infinity (in particular, 
  also super-polynomial) provided that it is everywhere
  defined on the real line. A vanishing viscosity argument
  is carried out and the convergence of the solutions to the ones
  of the pure Cahn-Hilliard equation is shown.
  Some refined regularity results are also deduced for both the 
  viscous and the non-viscous case.
  \\[.5cm]
  {\bf AMS Subject Classification:} 35K25, 35R60, 60H15, 80A22.\\[.5cm]
  {\bf Key words and phrases:} stochastic viscous Cahn-Hilliard equation, singular potential,
  well-posedness, regularity, vanishing viscosity, variational approach.
\end{abstract}

\pagestyle{myheadings}
\newcommand\testopari{\sc Luca Scarpa}
\newcommand\testodispari{\sc The stochastic viscous Cahn-Hilliard equation}
\markboth{\testodispari}{\testopari}


\thispagestyle{empty}

\section{Introduction}
\setcounter{equation}{0}
\label{intro}
The deterministic Cahn-Hilliard equation was first proposed in \cite{cahn-hill} to describe the
spinodal decomposition phenomenon
occurring during the phase separation in a binary metallic alloy.
In order to model the dynamics of viscous phase-transitions, 
Novik-Cohen introduced in \cite{novick-cohen} the 
viscous regularization of the equation (see also \cite{ell-st, ell-zhen}).

The pure and viscous Cahn-Hilliard equations can be written in a unified form as
\[
  \partial_t u - \Delta \bar w =0\,, \qquad \bar w\in\eps\partial_t u-\Delta u + \beta(u) + \pi(u) - g \qquad\text{in } (0,T)\times D\,,
\]
where $D$ is a smooth bounded domain in $\erre^N$ ($N=2,3$),
$T>0$ is a fixed finite time, $\Delta$ is the Laplace operator, $g$ is a prescribed source term and 
$\eps$ is a nonnegative parameter:
the case $\eps=0$ corresponds to the pure case, while $\eps>0$
corresponds to the viscous case.
The unknowns of the equation are 
the order parameter $u$ and the chemical potential $\bar w$.
As usual, $\beta$ is a maximal monotone operator
and $\pi$ is a Lipschitz-continuous function on $\erre$,
so that the term
$\beta+\pi$ can be interpreted as the (sub)differential of a so-called
double-well potential. This can be seen as a sum $\widehat\beta+\widehat\pi$,
where $\widehat\beta$ is the convex part and $\widehat\pi$ the concave perturbation:
in this setting, we have $\beta=\partial \widehat\beta$ and $\pi=\widehat\pi'$.
We refer to \cite{col-gil-spr} for some 
concrete example of double-well potentials.
Usually, the equation is complemented with homogeneous Neumann boundary conditions
for both $u$ and $\bar w$, ensuring thus the conservation of the mean of $u$ on $D$
(as it follows directly integrating the first equation),
and a given initial datum:
\[
  \partial_{\bf n}u = \partial_{\bf n}\bar w = 0 \quad\text{on } (0,T)\times\partial D\,,
  \qquad u(0)=u_0 \quad\text{in } D\,,
\]
where ${\bf n}$ stands for the outward normal unit vector on $\partial D$.

In the last decades, the mathematical literature on deterministic Cahn-Hilliard equations
has been widely developed. Existence of solutions, continuous dependence
on the data and regularity have been studied for example in 
\cite{col-fuk-eqCH, col-gil-spr, cher-gat-mir, cher-mir-zel, cher-pet, colli-fuk-CHmass,
gil-mir-sch} also under refined frameworks such as
the dynamic boundary conditions for $u$ or $\bar w$.
Asymptotic behaviour of the solutions have also been 
analyzed in \cite{col-fuk-diffusion, col-scar, gil-mir-sch-longtime}.
More recently, some existence and uniqueness have been obtained
when the dependence of $\bar w$ on the viscosity term is of nonlinear type:
we mention in this direction the works
\cite{bcst1, mir-sch, scar-VCHDBC}.
Alongside the analysis of well-posedness for the equation, 
several results have also been achieved in the context of
optimal control problems: we point out for example the contributions
\cite{col-far-hass-gil-spr, col-gil-spr-contr, col-gil-spr-contr2, hinter-weg}.

It is well-known, however, that the deterministic model fails in taking into account 
the effects due to the random microscopic movements, 
which be of configurational, vibrational, electronic and magnetic type
(see for example \cite{cook}).
In order to capture the randomness of the phenomenon
in the model, the most natural way is to add a cylindrical Wiener
process in the first equation (see \cite{lee-CH}), 
and obtain a stochastic partial differential equation in $u$.
While the stochastic formulation of the problem 
is straightforward for the pure Cahn-Hilliard equation, 
due to the viscosity term we have to rearrange the system 
in a different way.
To this end, note that if we formally substitute the second 
equation in the first one, the system can be written as
\[
  \partial_t(u-\eps\Delta u) - \Delta  w = 0\,, \qquad
  w \in -\Delta u + \beta(u) + \pi(u) - g\,,
\]
with Neumann boundary conditions for $u$ and $w$. Consequently, 
the stochastic formulation of the system is given by
\[
  d(u-\eps\Delta u) - \Delta w\,dt = B(t,u)\,dW\,, \qquad w \in -\Delta u + \beta(u) + \pi(u) - g\,,
\]
where $W$ is a cylindrical Wiener process on a certain Hilbert space $U$
and $B$ is a suitable operator integrable with respect to $W$.

The main motivation behind the mathematical analysis of
stochastic Cahn-Hilliard equations is to provide a theoretical starting point
to study further models with stochastic perturbations
which are relevant in terms of applications.
Among all, in the last years there has been
a growing interest in the study of phase field models 
for tumor growth, for example, which aim at describing 
the evolution of a tumoral mass within a healthy tissue
according to several factors such as proliferation, 
apoptosis, nutrient consumption, etc.
Such models usually couple a Cahn-Hilliard equation
for the tumoral fraction with a reaction-diffusion equation for the nutrient:
see for example \cite{crist, garcke, hawk, hawk2, oden} for 
the model derivation and \cite{col-gil-hil, col-gil-roc-spr, col-gil-roc-spr2, 
frig-grass-roc, garcke-lam, garcke-lam2, garcke-lam-roc}
(and the references therein) for studies
on well-posedness and optimal control.
While in the deterministic setting such models
have received much attention, we are not aware
of any contribution dealing also with possible stochastic 
perturbations. In particular, any mathematical analysis 
of the stochastic counterpart would require first 
some solid well-posedness and regularity results
for the stochastic Cahn-Hilliard equation itself, which is
currently not very developed in literature.
In this direction, this paper provides a starting point 
in terms of well-posedness and regularity
for possible future studies of stochastic models 
involving Cahn-Hilliard equations. A joint work with 
C.~Orrieri and E.~Rocca on a stochastic version
of a phase-field model for tumor growth is in preparation.

From the mathematical perspective, the stochastic Cahn-Hilliard
has been studied so far mainly in the pure case $\eps=0$.
Existence, uniqueness and regularity have been investigated in
the works \cite{corn, daprato-deb, elez-mike}
for the stochastic pure Cahn-Hilliard equation with a polynomial 
double-well potential, and in \cite{scar-SCH, deb-goud, goud}
for the pure case with a possibly singular double-well potential.
The reader can also refer to \cite{ant-kar-mill} for a study of a stochastic Cahn-Hilliard 
equation with unbounded noise and \cite{deb-zamb, deb-goud, goud} dealing
with stochastic Cahn-Hilliard equations with reflections.
To the best of our knowledge, 
the only available results dealing with the stochastic viscous
Cahn-Hilliard equation seem to be \cite{hao-stochVCH, ju-stochVCH}:
here, the authors prove existence of mild solution and attractors
under the classical case of a polynomial double-well potential.
Let us also mention, for completeness,
the contributions \cite{bauz-bon-leb} dealing with a stochastic 
Allen-Cahn equation with constraints, and
\cite{feir-petc,feir-petc2} for a study of a diffuse interface model 
with termal fluctuations.

The aim and novelty of this paper is to carry out a unifying and self-contained 
mathematical analysis of the stochastic viscous Cahn-Hilliard system,
under no growth nor smoothness assumptions on the double-well potential.
This is motivated by the fact that, in applications to phase-transitions,
degenerate potentials (possibly defined on bounded domains) play an important role (see again \cite{col-gil-spr}):
consequently, from the mathematical point of view, it is worth
trying to give sense to the equation with as less constraints as possible
on the growth of $\beta$.
In this direction, our previous contribution \cite{scar-SCH}
analyzed the pure case with no growth restriction on the potential.
A corresponding analysis for the viscous case is not available yet,
and is performed here.
More specifically, we prove well-posedness for the 
viscous equation under no growth nor smoothness conditions on $\beta$.
The only requirement is that $\beta$ is everywhere defined
on the real line: while this hypothesis is not needed in the deterministic theory,
it seems to be essential in the stochastic case. Nevertheless, 
any order of growth for $\beta$ at infinity (even super-exponential for example)
is included in our treatment. The techniques that we use
are based on a generalized variational approach, which has been
also employed in order to analyse singular semilinear equations
(see \cite{mar-scar-diss,mar-scar-ref, mar-scar-erg}), 
divergence-form equations
(see \cite{mar-scar-div, mar-scar-div2, scar-div, mar-scar-note}),
Allen-Cahn equations with dynamic boundary conditions
(see \cite{orr-scar}) and
porous media equations (see \cite{barbu-daprato-rock}).
The second main novelty of this paper is that we prove 
the convergence of solutions to the viscous Cahn-Hilliard equation to the ones
of the pure equation, as the viscosity coefficient goes to $0$.
Such a convergence result is very relevant for many applications: indeed, 
the solutions to the viscous Cahn-Hilliard equation
are much more regular and easier to handle, hence the possibility 
of approximating the (less regular) solutions to the pure equation
can be used in practice for example in regularity problems.
As an direct application of the vanishing viscosity limit, 
we prove some refined regularity results for the both the viscous and 
the pure case.

We are thus interested in studying the stochastic viscous Cahn-Hilliard equation
with homogeneous Neumann boundary conditions in the following form:
\begin{align}
  \label{eq1}
  d(u-\eps\Delta u)(t) - \Delta w(t)\,dt = B(t, u(t))\,dW(t) \qquad&\text{in } (0,T)\times D\,,\\
  \label{eq2}
  w \in -\Delta u + \beta(u) + \pi(u) - g \qquad&\text{in } (0,T)\times D\,,\\
  \label{eq3}
  \partial_{\bf n} u=\partial_{\bf n}w=0 \qquad&\text{in } (0,T)\times\partial D\,,\\
  \label{eq4}
  u(0)=u_0 \qquad&\text{in } D\,.
\end{align}

Let us briefly summarize the contents of the work.
Section~\ref{main_results} contains the main hypotheses of the paper
and the statement of the main results:
the well-posedness of the system with both additive and multiplicative noise,
the vanishing viscosity limit and the regularity results.
Section~\ref{sec:WP} deals with the proof of well-posedness.
The main idea is to start considering the problem
with additive noise, where $\beta$ and $B$ are regularized
considering the Yosida approximation and an elliptic-type approximation 
in space, respectively. This is solved using the classical variational 
approach in a Hilbert triple, with a suitable dualization chosen {\em ad hoc}.
Then uniform estimates on the approximated solutions, both pathwise
and in expectation, together with compactness and monotonicity arguments,
provide existence of solutions to the original problem.
In particular, a crucial point is to prove a generalized It\^o's formula 
on a certain dual space, from which a very natural continuous 
dependence on the initial datum follows.
The generalization to the case of multiplicative noise is carried
out combining a Lipschitz-type assumption on $B$ and a fixed point argument, 
and using a classical 
patching argument in time.
In Section~\ref{asymp} we prove the asymptotic behaviour of the solutions
as the viscosity coefficient goes to $0$. Again, this is carried out 
proving uniform estimates independent of the parameter $\eps$, and
the passage to the limit is performed through monotonicity techniques.
Finally, Section~\ref{reg} contains the proof of the regularity results:
in the first one we show that
additional requirements 
on the momentum of the data yield, in the case of 
the classical double-well potential, additional space-time estimates
on the chemical potential and on the nonlinearity. The proof is based 
strongly on the Sobolev embeddings theorems and 
a generalized It\^o's formula for the problem.
Finally, in the second regularity result
the main idea is that if the data of the problem are more regular, then we 
are able to give appropriate sense to an It\^o-type formula (better said, inequality)
for the Ginzburg-Landau free-energy functional associated to the system.
Starting with the viscous case, we show further uniform estimates 
on the solutions yielding the desired regularity result. Furthermore, 
passing to the limit as $\eps\searrow0$ in the estimates which are
independent of the viscosity parameter yields
some refined regularity properties also for the pure case
thanks to the asymptotic result 
already proved. 
 As a main consequence, 
we are able to give sufficient conditions yielding 
$H^3$-regularity in space for the solution.


\section{General setting and main results}
\setcounter{equation}{0}
\label{main_results}

Let $D\subseteq\erre^N$ ($N=2,3$) be a smooth bounded domain, $T>0$
a fixed final time and set $Q:=(0,T)\times D$.
We shall work on an underlying filtered probability space
$(\Omega,\cF, (\cF_t)_{t\in[0,T]}, \P)$ satisfying the usual conditions, i.e.~the 
filtration is saturated and right-continuous. Moreover, 
$U$ is a separable Hilbert space and $W$ is a cylindrical Wiener process on $U$.

Throughout the work, the spaces of Bochner-integrable functions
shall be denoted by the classical symbols 
$L^p(0,T; E)$ and $L^p(\Omega; E)$, where $E$
can be an arbitrary Banach space. 
The spaces of bounded linear operators and 
Hilbert-Schmidt operators are indicated 
by $\cL(E_1, E_2)$ and $\cL^2(E_1, E_2)$, respectively, 
where $E_1$ and $E_2$ are Hilbert spaces.
Moreover, we shall denote duality pairings, scalar products
and norms in Banach and Hilbert spaces with the symbols
$\sp\cdot\cdot$, 
$(\cdot,\cdot)$ and $\norm{\cdot}$, respectively, 
specifying the space in consideration through a subscript.
We shall also use the notation $a\lesssim b$ for any 
$a,b\geq0$ to say that there exists $C>0$ such that $a\leq Cb$: when
$C$ depends explicitly on a specific quantity we 
shall indicate it explicitly through a subscript.

We shall use the following definitions:
\[
  H:=L^2(D)\,, \qquad V_s:=
  \begin{cases}
  H^s(D) \qquad&\text{if } s\in[1,2)\,,\\
  \left\{v \in H^s(D): \;\partial_{\bf n}v=0 \;\text{ on } \partial D\right\}\quad&\text{if } s\geq 2\,.
  \end{cases}
\]
In particular, $V_1\embed H$ densely, and we identify $H$ with $H^*$ in the usual way, so that 
$(V_1,H,V_1^*)$ is a Hilbert triplet.
For every element $v\in V_1^*$, the mean of $v$ on $D$ is denoted by $v_D:=\frac1{|D|}\ip{v}{1}_{V_1}$, and we set
\[
  V^*_{1,0}:=\{v\in V_1^*: v_D=0\}\,, \qquad H_0:=\{v\in H: v_D=0\}\,, \qquad V_{1,0}:=V_1\cap H_0
\]
for the subspaces of $V_1^*$, $H$ and $V_1$ formed of the null-mean elements.
Let us recall also that, thanks to 
the Poincar\'e-Wirtinger inequality and
the classical elliptic regularity results for the Laplace operator
(see \cite[Thm.~3.2]{brezzi-gilardi}), two equivalent norms on $V_1$ and $V_2$
are given by, respectively, 
\[
  \norm{v}_1:=\sqrt{|v_D|^2 + \norm{\nabla v}^2_H}\,, \quad v\in V_1\,, \qquad\qquad
  \norm{v}_2:=\sqrt{\norm{v}_H^2 + \norm{\Delta v}_H^2}\,, \quad v\in V_2\,.
\]
The Laplace operator with homogeneous Neumann boundary conditions is defined as
\[
  -\Delta : V_1 \to V_1^*\,, \qquad \ip{-\Delta v}{\varphi}_{V_1}
  :=\int_D\nabla v\cdot\nabla\varphi\,, \qquad v,\varphi\in V_1\,.
\]
We recall that, since $\norm{\cdot}_1$ is equivalent to the usual norm in $V_1$, 
the restriction of $-\Delta$ to $V_{1,0}$ is an isomorphism between $V_{1,0}$ and $V_{1,0}^*$.
In particular, it is well defined its inverse
\[
  \mathcal N:V^*_{1,0}\to V_{1,0}\,,
\]
where for every $v\in V^*_{1,0}$, the element $\mathcal Nv\in V_{1,0}$ 
is the unique solution with null mean to the generalized Neumann problem
\[
  \int_D\nabla\mathcal Nv \cdot\nabla\varphi=\ip{v}{\varphi}_{V_1} \quad\forall\,\varphi\in V_1\,, \qquad (\mathcal Nv)_D=0\,.
\]
It is clear that $\mathcal N$ is an 
isomorphism between $V_{1,0}^*$ and $V_{1,0}$ satisfying
\[
  \sp{v_1}{\mathcal Nv_2}_{V_1}=\sp{v_2}{\mathcal Nv_1}_{V_1}=\int_D\nabla\mathcal Nv_1\cdot\nabla\mathcal Nv_2 
  \qquad\forall\,v_1,v_2\in V_{1,0}^*\,.
\]
Moreover, we also recall that an equivalent norm on $V_1^*$ is given by 
\[
  \norm v_*:=\sqrt{\norm{\nabla\mathcal N(v-v_D)}_H^2 + |v_D|^2}\,, \quad v\in V_1^*\,,
\]
and that for every $\eta>0$ there exists $C_\eta>0$ such that
\[
  \norm v_H^2 \leq \eta\norm{\nabla v}_H^2 + C_\eta\norm v_*^2 \qquad\forall\,v\in V_1\,.
\]
Finally, for every $v\in H^1(0,T; V_{1,0}^*)$ we have
\[
  \sp{\partial_t v(t)}{\mathcal Nv(t)}_{V_1}
  =\frac12\frac{d}{dt}\norm{\nabla\mathcal Nv(t)}_H^2 \quad\text{for a.e.~}t\in(0,T)\,.
\]
For any further detail, the reader can refer to \cite[pp.~979-980]{col-gil-spr}.

We introduce the following space, for any $p,q\in[1,+\infty)$,
\[
  L^{p,q}(\Omega; L^2(0,T; V_1)):=\left\{v\in L^p(\Omega; L^2(0,T; H)): \quad
  \nabla v \in L^q(\Omega; L^2(0,T; H^N))\right\}\,,
\]
and define the operator
\[
  R_\eps:=\begin{cases}
  I-\eps\Delta:V_1\to V_1^* \quad&\text{for } \eps>0\,,\\
  I:H\to H \quad&\text{for } \eps=0\,.
  \end{cases}
\]

The following hypotheses will be in order throughout the paper:
\begin{itemize}
  \item[(H1)] $\beta:\erre\to2^\erre$ is a maximal monotone graph, with $0\in\beta(0)$ and $D(\beta)=\erre$. 
  	This implies in particular that $\beta$ is the subdifferential of a continuous convex function
	$\widehat\beta:\erre\to[0,+\infty)$ such that $\widehat\beta(0)=0$. We shall also assume
	a symmetry-like condition on $\widehat\beta$ of the form
	\[
	\limsup_{|r|\to+\infty}\frac{\widehat\beta(r)}{\widehat\beta(-r)}<+\infty\,,
	\]
	which is trivially satisfied if $\widehat\beta$ is an even function for example.
	If we denote by $\widehat{\beta^{-1}}$ the convex conjugate of $\widehat\beta$, i.e.
	\[
	\widehat{\beta^{-1}}:\erre\to[0,+\infty]\,, \qquad
	\widehat{\beta^{-1}}(r):=\sup_{s\in\erre}\{sr-\widehat\beta(s)\}\,, \quad r\in\erre\,,
	\]
	then it is well-known that $\partial\widehat{\beta^{-1}}=\beta^{-1}$,
	and the fact that $\beta$ is everywhere defined on $\erre$ implies that 
	$\widehat{\beta^{-1}}$ is superlinear at infinity, i.e.
	\[
	\lim_{|r|\to+\infty}\frac{\widehat{\beta^{-1}}(r)}{r}=+\infty\,.
	\]
  \item[(H2)] $\pi:\erre\to\erre$ is a Lipschitz-continuous function such that $\pi(0)=0$.
  	We shall denote the Lipschitz constant of $\pi$ by $C_{\pi}$
  	and we define $\widehat\pi:\erre\to\erre$ as $\widehat\pi(r):=\int_0^r\pi(s)\,ds$.
  \item[(H3)] $g:\Omega\times[0,T]\to H$ is progressively measurable and $g\in L^2(\Omega\times(0,T); H)$.
  \item[(H4)] $u_0 \in L^2(\Omega, \cF_0, \P; H)$, $\eps u_0 \in L^2(\Omega, \cF_0,\P; V_1)$
  	and $\widehat\beta(\alpha (u_0)_D)\in L^1(\Omega)$ for all $\alpha>0$.
  	Note that the last requirement on $u_0$ can be reformulated by saying that the mean of $u_0$ on $D$
	must belong to the small Orlicz space generated by $\widehat\beta$ on $\Omega$: the formulation
	with an arbitrary positive parameter $\alpha$ is crucial since the potential $\widehat\beta$
	is allowed to be super-homogeneous as well. Such assumption is not restrictive and is satisfied
	for example when $(u_0)_D$ is non-random. If $\widehat\beta$ is a polynomial, 
	then the last requirement amounts to saying that $(u_0)_D$
	has finite moment of a certain order.
\end{itemize}

We are now ready to give the definition of strong solution for the problem.
Fix $\eps>0$: recall that we want to study the problem
\[
  d(u-\eps\Delta u) - \Delta w\,dt = B(u)\,dW\,, \qquad w \in -\Delta u + \beta(u) + \pi(u) - g\,,
\]
with homogeneous Neumann conditions on $u$ and $w$.
In order to understand what a reasonable weak formulation of the problem can be, 
we argue formally in the first place. Assume that $(u,\xi)$ is a sufficiently regular
solution to our problem: this means that
\[
u-\eps\Delta u - \int_0^\cdot\Delta w(s)\,ds = u_0-\eps\Delta u_0 + B(u)\cdot W\,, \qquad
w=-\Delta u + \xi + \pi(u) - g\,, \quad \xi \in \beta(u)\,.
\]
A weak formulation of the problem can be obtained multiplying by a suitable test function $\varphi$
and integrating by parts on $D$. In particular, taking into account the boundary conditions of $u$ and $w$,
we see that if $\partial_{\bf n}\varphi=0$, then
\[
  \int_D u\varphi + \eps\int_D\nabla u\cdot\nabla\varphi
  -\int_0^\cdot\!\!\int_D w(s)\Delta\varphi\,ds=
  \int_Du_0\varphi + \eps\int_D\nabla u_0\cdot\nabla\varphi + \int_D(B(u)\cdot W)\varphi\,.
\]
Due to the singularity of $\beta$, we cannot expect $\xi$ (hence also $w$)
to be $H$-valued, but only $L^1(D)$-valued. Consequently, the choice
of the space of test functions should also guarantee at least that 
$\Delta\varphi \in L^\infty(D)$. For example, we can take $\varphi \in V_4$,
which ensures both that $\partial_{\bf n}\varphi=0$ and, thanks to the 
Sobolev embeddings, that $\Delta\varphi \in H^2(D)\embed L^\infty(D)$.
Let us now state the definition of strong solution for the problem in a rigorous way.
\begin{defin}[Strong solution]\label{def_sol}
  Let $\eps\geq 0$.
  A strong solution to \eqref{eq1}--\eqref{eq4} is a triplet $(u, w, \xi)$, where 
  $u$ is an $H$-valued predictable process, 
  $w$ is a $L^1(D)$-valued adapted process and 
  $\xi$ is a $L^1(D)$-valued predictable process, 
  such that
  \begin{gather*}
    u \in L^2(\Omega; C^0([0,T]; V_1^*))\cap L^2(\Omega; L^\infty(0,T; H))\cap L^2(\Omega; L^2(0,T; V_2))\,,\\
    \eps u\in L^2(\Omega; C^0([0,T]; H))\cap L^2(\Omega; L^\infty(0,T; V_1))\,, \qquad
    w, \xi \in L^1(\Omega\times(0,T)\times D)\,,\\
    \widehat\beta(u) + \widehat{\beta^{-1}}(\xi) \in L^1(\Omega\times(0,T)\times D)\,,\\
    \xi \in \beta(u) \quad\text{a.e.~in } \Omega\times (0,T)\times D\,,\\
    w = -\Delta u + \xi + \pi(u) -g
  \end{gather*}
  and, for every $t\in[0,T]$, $\P$-almost surely,
  \begin{align*}
    \int_Du(t)\varphi &+\eps\int_D\nabla u(t)\cdot \nabla\varphi 
    -\int_0^t\!\!\int_Dw(s)\Delta\varphi \,ds\\
    &=\int_Du_0\varphi + \eps\int_D\nabla u_0\cdot\nabla \varphi + 
    \ip{\int_0^tB(s,u(s))\,dW(s)}{\varphi}_{V_1} \qquad\forall\,\varphi\in V_4\,.
  \end{align*}
\end{defin}

We collect now the main results of the paper.
The first two results deal with the well-posedness of the problem
in the case of additive and multiplicative noise.
We prefer to separate the two cases since with additive noise 
the stochastic integrand is allowed to be more general, while 
with multiplicative noise it is forced to take values in the
space of mean-null elements (see also Remark~\ref{rmk:ip_B}).
The pure and viscous cases are analyzed simultaneously
by considering $\eps\geq0$.
\begin{thm}[Well-posedness, additive noise] 
\label{thm1}
Let $\eps\geq0$, $(u_0, g)$ satisfy (H1)--(H4) and
\begin{gather}
  \label{B1}
  B \in L^2(\Omega\times(0,T); \cL^2(U, H))\qquad
  \text{progressively measurable}\,,\\
  \label{B2}
  \widehat\beta(\alpha(B\cdot W)_D) \in L^1(\Omega\times(0,T))
  \qquad\forall\,\alpha>0\,.
\end{gather}
Then the problem \eqref{eq1}--\eqref{eq4} admits a strong solution. 
Furthermore, for every $p\in[1,2]$ 
there exists a constant $K>0$, independent of $\eps$,
such that if $(u_0^1, g_1, B_1)$ and $(u_0^2, g_2, B_2)$
satisfy (H1)--(H4), \eqref{B1}--\eqref{B2} and
\beq\label{comp_data}
  (u_0^1)_D + (B_1\cdot W)_D = (u_0^2)_D + (B_2\cdot W)_D\,,
\eeq 
then, for  
any respective strong solutions 
$(u_1, w_1, \xi_1)$ and $(u_2, w_2, \xi_2)$
to \eqref{eq1}--\eqref{eq4},
\begin{align*}
  &\norm{u_1-u_2}_{L^p(\Omega; C^0([0,T]; V_1^*))}^p + 
  \eps^{p/2}\norm{u_1-u_2}_{L^p(\Omega; C^0([0,T]; H))}^p
  +\norm{\nabla(u_1-u_2)}^p_{L^p(\Omega; L^2(0,T; H))}\\
  &\leq K\left(\norm{u_0^1-u_0^2}_{L^p(\Omega; V_1^*)}^p
  +\norm{g_1-g_2}^p_{L^p(\Omega; L^2(0,T; V_1^*))}
  +\norm{B_1-B_2}^p_{L^p(\Omega; L^2(0,T; \cL^2(U,V_1^*)))}\right)\,.
\end{align*}
In particular, \eqref{comp_data} is true if 
$B_1$ and $B_2$ take values in $\cL^2(U, V_{1,0}^*)$ and $(u_0^1)_D=(u_0^2)_D$.
\end{thm}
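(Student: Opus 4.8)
The plan is to prove existence first and then the continuous-dependence estimate, since the latter will double as the uniqueness statement. For existence, I would follow the scheme announced in the introduction: regularize $\beta$ by its Yosida approximation $\beta_\lambda$ (whose potential $\widehat\beta_\lambda$ is $C^1$ with globally Lipschitz derivative) and approximate $B$ by an elliptic-type truncation $B_\delta$ taking values in a space on which the classical variational machinery applies; then solve the approximate problem. The key structural observation, which is what makes the variational approach work here, is the change of variables suggested by the operator $\mathcal{N}$: splitting $u = (u)_D + \bar u$ into mean and fluctuation, the mean part $(u(t))_D = (u_0)_D + (B\cdot W(t))_D$ is completely explicit (integrate \eqref{eq1} against $1$), so only the null-mean component $\bar u$ is genuinely unknown, and it satisfies an equation that, after applying $\mathcal N$, fits a Gelfand triple of the form $(V_{1,0}, V_{1,0}^*, \ldots)$ with the dualization chosen so that $-\Delta u$ is the leading monotone operator. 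On this approximate level the classical theorem for variational SPDEs with monotone-coercive drift gives a unique solution $(u_\lambda^\delta, w_\lambda^\delta)$.

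Next I would derive the a priori estimates, both pathwise and in expectation, uniformly in $\lambda, \delta$ (and crucially uniformly in $\eps$, since the same constants are needed later for the vanishing-viscosity limit). The natural test functions are $\mathcal N(u-(u)_D)$ — which produces the $\norm{\cdot}_*$ norm and controls $\norm{u}_{L^\infty(0,T;H)}$ together with $\norm{\nabla u}_{L^2(0,T;H)}$ and $\eps\norm{\nabla u}_{L^\infty(0,T;H)}$ after using the interpolation inequality $\norm v_H^2 \le \eta\norm{\nabla v}_H^2 + C_\eta\norm v_*^2$ — and $w$ itself, to get an $L^2(0,T;V_1)$ bound on $u$ via elliptic regularity and an $L^1$ bound on $\xi$. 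Controlling the $\beta$-term is where hypotheses (H1), (H2), (H4), (B2) enter: the superlinearity of $\widehat{\beta^{-1}}$ and the symmetry condition $\limsup \widehat\beta(r)/\widehat\beta(-r)<\infty$ are used to absorb $\int_D \xi\cdot((u)_D - \text{something})$ type terms and to get $\widehat\beta(u) + \widehat{\beta^{-1}}(\xi) \in L^1$, exactly as in \cite{scar-SCH}. Then compactness (weak and weak-star limits, plus strong $L^2$ compactness in appropriate negative-order spaces via an Aubin–Lions/Simon-type argument adapted to the stochastic setting), together with the standard monotonicity/Minty trick to identify $\xi \in \beta(u)$ and the usual argument to pass to the limit in the stochastic integral, yield a strong solution in the sense of Definition~\ref{def_sol}.

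Finally, the continuous-dependence estimate. Writing $u := u_1 - u_2$, $w := w_1 - w_2$, and similarly for the data, the hypothesis \eqref{comp_data} is precisely what forces $(u(t))_D = (u_0^1 - u_0^2)_D$ to be \emph{deterministic} in $t$ — the stochastic contributions to the mean cancel — so $u(t) - (u_0^1-u_0^2)_D$ is null-mean and one may apply $\mathcal N$ to it. I would then write an It\^o formula for $\tfrac12\norm{\nabla\mathcal N(u(t)-(u_0^1-u_0^2)_D)}_H^2$ (this is the ``generalized It\^o formula on a dual space'' referred to in the introduction; on the approximate level it is classical, and one passes to the limit). The drift terms produce $-\int_D \nabla u\cdot\nabla u = -\norm{\nabla u}_H^2$ from the $-\Delta$ part, a nonnegative (discarded) term from $\beta$ by monotonicity, a term controlled by $C_\pi \norm u_H^2$ from $\pi$, and cross terms with $g_1-g_2$ estimated by Young's inequality against $\norm{\nabla u}_H^2$ using the $\norm\cdot_*$ norm; the interpolation inequality absorbs the leftover $\norm u_H^2$. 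The It\^o correction term contributes $\norm{B_1-B_2}_{\cL^2(U,V_{1,0}^*)}^2$. After taking $\sup_t$, using the Burkholder–Davis–Gundy inequality on the martingale part, and Gronwall, one gets the estimate for $p=2$; the case $p\in[1,2]$ follows by a further BDG/Jensen argument exactly as is standard. The constant $K$ is independent of $\eps$ because $\eps$ enters only through the extra nonnegative term $\eps\int_D|\nabla u(t)|^2$ on the left-hand side of the It\^o identity, which only helps. The last sentence of the theorem is immediate: if $B_1, B_2$ take values in $\cL^2(U,V_{1,0}^*)$ then $(B_i\cdot W)_D \equiv 0$, so \eqref{comp_data} reduces to $(u_0^1)_D = (u_0^2)_D$.

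\textbf{Main obstacle.} I expect the hardest point to be the rigorous justification of the generalized It\^o formula for $\norm{\nabla\mathcal N(\,\cdot\,)}_H^2$ applied to the difference of two (merely strong, not variational) solutions — the difference process has too little regularity for the classical It\^o formula in a Gelfand triple to apply directly, so one must either prove it on the Yosida/truncation level and pass to the limit carefully, or establish a bespoke It\^o formula via a double regularization (in time by mollification, in the $\beta$-nonlinearity). The bookkeeping of the $\beta$-terms — showing the relevant integrals are finite and that the monotonicity term may legitimately be dropped despite $\xi$ being only $L^1$ — is the delicate companion issue, and is where the everywhere-definedness of $\beta$ (hence superlinearity of $\widehat{\beta^{-1}}$) and the symmetry condition on $\widehat\beta$ are indispensable.
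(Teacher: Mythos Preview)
Your overall strategy is sound and matches the paper's, but there is one structural difference worth noting and one place where your description is too vague to be sure it works.

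\textbf{The Gelfand triple.} You propose splitting off the mean and working on null-mean spaces with $\mathcal N$ providing the dualization, writing the triple as ``$(V_{1,0}, V_{1,0}^*, \ldots)$''. The paper instead works on the triple $(V_2, V_1, V_2^*)$ where $V_1$ is identified with its dual via the $\eps$-dependent isomorphism $R_\eps = I - \eps\Delta$, i.e.\ via the scalar product $(v_1,v_2)_{1,\eps} = \int_D v_1 v_2 + \eps\int_D \nabla v_1\cdot\nabla v_2$. This is the point of the construction: the viscous term is absorbed into the pivot, so the approximated equation reads $du_\lambda + \mathcal A_\lambda(u_\lambda)\,dt = R_\eps^{-1}B\,dW$ in $V_2^*$ with $\mathcal A_\lambda$ satisfying the classical Krylov--Rozovski\u\i--Pardoux hypotheses directly. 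Your null-mean approach does not obviously accommodate the $\eps\Delta$ term in the time derivative, and the triple you wrote down is incomplete; you would need something at the $V_2$ level to see the biharmonic structure. Either approach can be made to work, but the paper's is cleaner here precisely because it is tailored to the viscous case.

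\textbf{Second estimate.} Your proposed test function ``$w$ itself'' is not what the paper uses for the $L^2(0,T;V_2)$ bound; instead it tests (pathwise) with $u_\lambda - R_\eps^{-1}B\cdot W$, and in expectation writes It\^o's formula for $\frac12\|u_\lambda\|_{1,\eps}^2$. Testing with $w_\lambda$ would require $w_\lambda \in V_2$, which is not available.

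\textbf{The main obstacle.} You correctly identified it. The paper's device is an additional elliptic regularization: apply $(I-\sigma\Delta)^{-m}$ (with $m$ large enough that it maps $L^1(D)\to V_4$) to the difference equation, write the classical It\^o formula for $\Phi_\eps(R_\eps(u_1^\sigma - u_2^\sigma))$ where $\Phi_\eps(v) = \frac12\|\nabla\mathcal N R_\eps^{-1}(v-v_D)\|_H^2 + \frac\eps2\|R_\eps^{-1}(v-v_D)\|_H^2$, and then let $\sigma\to 0$. The passage to the limit in the cross term $\int (\xi_1^\sigma - \xi_2^\sigma)(u_1^\sigma - u_2^\sigma)$ is by Vitali, using the symmetry hypothesis on $\widehat\beta$ and Jensen's inequality for the positive operator $(I-\sigma\Delta)^{-1}$ to dominate the integrand uniformly in $\sigma$ by something in $L^1$. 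This is the precise mechanism you were anticipating.
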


\begin{rmk}\label{rmk:ip_B}
  Note that the assumptions \eqref{B1}--\eqref{B2}
  allow the operator $B$ to take values in the larger space $\cL^2(U,H)$.
  This is a generalization with respect to the classical results dealing with the stochastic 
  Cahn-Hilliard equation, which usually require that $B$ takes values in $\cL^2(U,H_0)$ instead:
  see for example \cite{daprato-deb, scar-SCH}. This is usually done in order to 
  ensure the conservation of the mean of $u$ in the system: in this work, we show 
  however that 
  in case of additive noise this is not necessary, provided that a suitable control on the $\widehat\beta$-moment
  of $(B\cdot W)_D$ holds. Of course, if $B$ is $\cL^2(U, H_0)$-valued, 
  hypothesis \eqref{B2} is clearly satisfied.
  Furthermore, note that if $\widehat\beta$ is controlled by a polynomial of order $p$,
  then by homogeneity it is not restrictive to consider only the case $\alpha=1$, and
  it is readily seen that \eqref{B2} is true if
  \[
  \norm{B\cdot W}_{C^0([0,T]; V_{1}^*)} \in L^p(\Omega)\,,
  \]
  which can be easily checked in turn through the
 Jensen and Burkholder-Davis-Gundy inequalities for example.
\end{rmk}

\begin{thm}[Well-posedness, multiplicative noise] 
\label{thm2}
Let $\eps\geq0$ and $(u_0, g)$ satisfy (H1)--(H4). Let also
$B:\Omega\times[0,T]\times V_1\to \cL^2(U, H_0)$
be progressively measurable, and assume that
there exists a constant $N_B>0$ and an
adapted process $f\in L^1(\Omega\times(0,T))$ such that, for every $(\omega,t)\in\Omega\times[0,T]$,
\begin{gather}
  \label{B3}
  \norm{B(\omega,t,v_1)-B(\omega,t,v_2)}_{\cL^2(U,V_1^*)}^2\leq N_B\norm{u-v}_{V_1^*}^2
  \qquad\forall\,v_1,v_2\in V_2\,,\\
  \label{B4}
  \norm{B(\omega,t,v)}_{\cL^2(U,H)}^2
  \leq  f(\omega,t) + N_B\norm{v}_{V_1}^2 
  \qquad\forall\,v\in V_1\,.
\end{gather}
Then the problem \eqref{eq1}--\eqref{eq4} admits a strong solution. 
Furthermore, for every $p\in[1,2]$
there exists a constant $K>0$, independent of $\eps$,
such that if $(u_0^1, g_1)$ and $(u_0^2, g_2)$
satisfy (H1)--(H4) and
\beq\label{comp_data'}
  (u_0^1)_D  = (u_0^2)_D\,,
\eeq 
then, for  
any respective strong solutions 
$(u_1, w_1, \xi_1)$ and $(u_2, w_2, \xi_2)$
to \eqref{eq1}--\eqref{eq4},
\begin{align*}
  &\norm{u_1-u_2}_{L^p(\Omega; C^0([0,T]; V_1^*))}^p + 
  \eps^{p/2}\norm{u_1-u_2}_{L^p(\Omega; C^0([0,T]; H))}^p
  +\norm{\nabla(u_1-u_2)}^p_{L^p(\Omega; L^2(0,T; H))}\\
  &\leq K\left(\norm{u_0^1-u_0^2}_{L^p(\Omega; V_1^*)}^p
  +\norm{g_1-g_2}^p_{L^p(\Omega; L^2(0,T; V_1^*))}\right)\,.
\end{align*}
\end{thm}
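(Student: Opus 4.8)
The plan is to reduce the multiplicative‑noise problem to the additive‑noise result of Theorem~\ref{thm1} by a contraction argument on a short time interval, followed by a concatenation in time; the only structural features used are the Lipschitz bound \eqref{B3} and the fact that here $B$ takes values in $\cL^2(U,H_0)$. Fix $\tau\in(0,T]$, to be chosen later. Given a predictable process $v$ in $L^2(\Omega;L^2(0,\tau;V_1))\cap L^2(\Omega;C^0([0,\tau];V_1^*))$, set $B_v(\omega,t):=B(\omega,t,v(\omega,t))$; this is progressively measurable (composition of the progressively measurable $B$ with $v$) and, by \eqref{B4} and $H\embed V_1^*$, it belongs to $L^2(\Omega\times(0,\tau);\cL^2(U,H))$, so it satisfies \eqref{B1} on $[0,\tau]$. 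Since $B_v$ is $\cL^2(U,H_0)$‑valued we have $(B_v\cdot W)_D\equiv0$, hence \eqref{B2} holds (because $\widehat\beta(0)=0$) and the compatibility condition \eqref{comp_data} is automatic. Theorem~\ref{thm1} then yields, on $[0,\tau]$, a strong solution of \eqref{eq1}--\eqref{eq4} with the constant integrand $B_v$, together with the chemical potential and the nonlinearity it carries; by the stability estimate of Theorem~\ref{thm1} (applied with $u_0^1=u_0^2$, $g_1=g_2$, $B_1=B_2=B_v$) this solution is unique, and I denote it $\Gamma_\tau(v)$. As strong solutions lie in $L^2(\Omega;L^2(0,\tau;V_2))\cap L^2(\Omega;C^0([0,\tau];V_1^*))$, $\Gamma_\tau$ maps the above space into itself, and a fixed point of $\Gamma_\tau$, with the associated $(w,\xi)$, is a strong solution on $[0,\tau]$ of the multiplicative problem.

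For the contraction, Theorem~\ref{thm1} with $p=2$ applied to $\Gamma_\tau(v_1)$ and $\Gamma_\tau(v_2)$ gives
\begin{align*}
  &\norm{\Gamma_\tau(v_1)-\Gamma_\tau(v_2)}_{L^2(\Omega;C^0([0,\tau];V_1^*))}^2
  +\norm{\nabla(\Gamma_\tau(v_1)-\Gamma_\tau(v_2))}_{L^2(\Omega;L^2(0,\tau;H))}^2\\
  &\qquad\leq K\,\norm{B_{v_1}-B_{v_2}}_{L^2(\Omega;L^2(0,\tau;\cL^2(U,V_1^*)))}^2
  \leq KN_B\tau\,\norm{v_1-v_2}_{L^2(\Omega;C^0([0,\tau];V_1^*))}^2\,,
\end{align*}
where the second inequality uses \eqref{B3} (legitimate because $v_i(t)\in V_2$ for a.e.~$t$) together with the trivial bound $\int_0^\tau\norm{\varphi(s)}_{V_1^*}^2\,ds\le\tau\,\norm{\varphi}_{C^0([0,\tau];V_1^*)}^2$. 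Hence, for $\tau$ small depending only on $K$ and $N_B$, $\Gamma_\tau$ is a strict contraction for the metric of $L^2(\Omega;C^0([0,\tau];V_1^*))$. The delicate point — and, I expect, the main obstacle of the whole proof — is that this metric is strictly weaker than the topology in which $\Gamma_\tau$ is naturally defined, since evaluating $B$ requires $V_1$‑regularity in space. I would therefore apply the Banach fixed point theorem on a closed ball $\mathcal B_R$ of $L^2(\Omega;L^2(0,\tau;V_1))\cap L^2(\Omega;C^0([0,\tau];V_1^*))$: such a ball is complete for the $L^2(\Omega;C^0([0,\tau];V_1^*))$‑metric because a bounded sequence in $L^2(\Omega;L^2(0,\tau;V_1))$ has a weakly convergent subsequence, and both the bound and predictability pass to the limit. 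Invariance $\Gamma_\tau(\mathcal B_R)\subseteq\mathcal B_R$ follows from the same estimate applied between $\Gamma_\tau(v)$ and the fixed reference $\Gamma_\tau(0)$: since $(\Gamma_\tau(v))_D=(\Gamma_\tau(0))_D=(u_0)_D$ the difference has null mean, so its $L^2(0,\tau;V_1)$‑norm is controlled by $\norm{\nabla(\Gamma_\tau(v)-\Gamma_\tau(0))}_{L^2(0,\tau;H)}$, and choosing $R$ of the order of $\norm{\Gamma_\tau(0)}$ and then $\tau$ small makes $\mathcal B_R$ invariant.

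Next I would concatenate in time. Let $u$ be the fixed point on $[0,\tau]$; because $B$ is $\cL^2(U,H_0)$‑valued the mean is conserved, so $(u(\tau))_D=(u_0)_D$ and therefore $\widehat\beta(\alpha(u(\tau))_D)=\widehat\beta(\alpha(u_0)_D)\in L^1(\Omega)$ for every $\alpha>0$; together with $u(\tau)\in L^2(\Omega,\cF_\tau,\P;H)$ and $\eps\,u(\tau)\in L^2(\Omega,\cF_\tau,\P;V_1)$ from the solution class, (H1)--(H4) hold with $u(\tau)$ as new initial datum, while (H3) and \eqref{B3}--\eqref{B4} restrict to $[\tau,2\tau]$. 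Since the contraction length $\tau$ depends only on $K$ and $N_B$ it is the same at each step, so finitely many iterations cover $[0,T]$, and gluing the solutions produces a strong solution on $[0,T]$; its uniqueness is contained in the stability estimate below.

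Finally, the stability estimate. Let $(u_i,w_i,\xi_i)$ solve \eqref{eq1}--\eqref{eq4} with data $(u_0^i,g_i)$ and $(u_0^1)_D=(u_0^2)_D$. On each subinterval $[(j-1)\tau,j\tau]$ both $u_i$ solve the additive‑noise problem with integrand $B(\cdot,u_i(\cdot))$, whose stochastic‑integral mean vanishes, so \eqref{comp_data} holds and Theorem~\ref{thm1} applies for every $p\in[1,2]$; arguing as in the contraction step, \eqref{B3} and the factor $\tau$ from the time integration absorb $\norm{B(\cdot,u_1)-B(\cdot,u_2)}_{L^p(\Omega;L^2(0,\tau;\cL^2(U,V_1^*)))}^p$ into the left‑hand side once $\tau$ is small, uniformly in $p\in[1,2]$ and in $\eps\geq0$. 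This bounds $\norm{u_1-u_2}_{L^p(\Omega;C^0([(j-1)\tau,j\tau];V_1^*))}^p$ (and the $\eps$‑ and gradient‑terms) by the data differences on that subinterval, and this bound feeds, through the initial‑datum term $\norm{u_1(j\tau)-u_2(j\tau)}_{L^p(\Omega;V_1^*)}^p$, into the next subinterval — note that $(u_1(j\tau))_D=(u_2(j\tau))_D$, so \eqref{comp_data} survives the iteration. Summing over the finitely many subintervals yields the claimed inequality with a constant $K$ depending on $T$, $N_B$ and the constant of Theorem~\ref{thm1}, but not on $\eps$. Apart from the functional‑analytic set‑up of the fixed point noted above, the only point requiring real care is the verification that the frozen integrand $B_v$ genuinely inherits all hypotheses of Theorem~\ref{thm1}; the estimates themselves are then routine.
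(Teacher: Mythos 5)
Your proposal is correct and follows essentially the same strategy as the paper: freeze the noise coefficient to reduce to the additive case of Theorem~\ref{thm1}, run a Banach fixed point in the metric of $L^2(\Omega;C^0([0,T_0];V_1^*))$ on a short interval whose length depends only on $N_B$ and the stability constant, and then patch in time, with the stability estimate obtained by the same absorption of the Lipschitz term. The only (minor) divergence is how the mismatch between the contraction metric and the domain of the solution map is handled: you work on a closed ball of $L^2(\Omega;L^2(0,\tau;V_1))\cap L^2(\Omega;C^0([0,\tau];V_1^*))$ that is complete for the weaker metric, whereas the paper extends the solution map by continuity to all of $L^2(\Omega;C^0([0,T_0];V_1^*))$ and then recovers the regularity of the fixed point from the range of the extension --- both devices are valid and resolve the same issue.
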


\begin{rmk}
  Let us mention that the well-posedness result implies that the solution map
  associating the data $(u_0, g)$ to the (unique) solution component $u$
  can be extended to the spaces
  \[
  L^2(\Omega; V_1^*)\times L^2(\Omega\times(0,T); V_1^*) \to 
  L^2(\Omega; C^0([0,T]; V_1^*))\cap L^2(\Omega\times(0,T); V_1)\,,
  \]
  so that one could possibly define an even weaker concept of solution
  by performing classical density arguments. This basically corresponds
  to considering the evolution system exclusively on the dual space $V_1^*$,
  with no further regularity a priori.
\end{rmk}

The next two results concern the vanishing viscosity limit of the problem as $\eps\searrow0$, 
in the case of both additive and multiplicative noise. In particular, we prove that 
any strong solution to the viscous problem converges in suitable topologies to 
a strong solution of the pure equation as the viscosity coefficient $\eps$ goes to $0$.

\begin{thm}[Asymptotics as $\eps\searrow0$, additive noise]
  \label{thm3}
  Assume (H1)--(H3), and
  let $u_0\in L^2(\Omega, \cF_0, \P; H)$ and
  $B\in L^2(\Omega\times(0,T); \cL^2(U, H))$ be progressively measurable.
  Suppose also that 
  \begin{gather*}
  (u_{0\eps})_{\eps>0}\subset L^2(\Omega, \cF_0, \P; V_1)\,, \qquad
  (g_\eps)_\eps\subset L^2(\Omega\times(0,T); H)\,, \\
  (B_\eps)_{\eps>0}\subset L^2(\Omega\times(0,T); \cL^2(U, H)) \quad
  \text{progressively measurable}
  \end{gather*}
  are such that
  \begin{gather}
    \label{data_app1}
    u_{0\eps}\to u_0 \quad\text{in } L^2(\Omega; H)\,,\qquad
    (\eps^{1/2}u_{0\eps})_{\eps>0} \quad\text{is bounded in } L^2(\Omega; V_1)\,, \\
    \label{data_app1'}
    (\eps^{1/2}u_{0\eps}(\omega))_\eps \quad\text{is bounded in } V_1 \text{ for $\P$-a.e.~}\omega\in\Omega\,,\\
    \label{data_app2}
    g_\eps\to g \quad\text{in } L^2(\Omega\times(0,T); H)\,,\\
    \label{data_app3}
    B_\eps\to B \quad\text{in } L^2(\Omega\times(0,T); \cL^2(U, H))\,,\\
    \label{data_app4}
    (u_{0\eps})_D+(B_\eps\cdot W)_D=(u_0)_D+(B\cdot W)_D \qquad\forall\,\eps>0\,.
  \end{gather}
  For any $\eps>0$, let $(u_\eps,w_\eps,\xi_\eps)$ be any 
  strong solutions to problem \eqref{eq1}--\eqref{eq4} 
  with data $(u_{0\eps}, g_\eps, B_\eps)$.
  Then there exists a strong solution $(u,w,\xi)$ to
  \eqref{eq1}--\eqref{eq4} with data $(u_0,g,B)$ 
  in the case $\eps=0$ 
  such that, as $\eps\searrow0$:
  \begin{gather*}
    u_\eps \to u \quad\text{in } L^p(\Omega; L^2(0,T; V_1))\quad \forall\,p\in[1,2)\,,\\
    u_\eps\wto u \quad\text{in } L^2(\Omega; L^2(0,T; V_2))\,,\qquad
    u_\eps\wstarto u \quad\text{in } L^\infty(0,T; L^2(\Omega; H))\,,\\
    \eps u_\eps\to 0 \quad\text{in } L^2(\Omega; L^\infty(0,T; V_1))\,,\\
    w_\eps\wto w \quad\text{in } L^1(\Omega\times(0,T)\times D)\,,\qquad
    \xi_\eps\wto\xi \quad\text{in } L^1(\Omega\times(0,T)\times D)\,.
  \end{gather*}
\end{thm}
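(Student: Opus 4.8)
The plan is to derive uniform-in-$\eps$ a priori estimates for the family $(u_\eps,w_\eps,\xi_\eps)$, extract weakly/strongly convergent subsequences, and identify the limit as a strong solution of the pure problem by a monotonicity (Minty-type) argument. First I would test the equation in its weak (dual) formulation with $\mathcal{N}(u_\eps-(u_\eps)_D)$, which, using the identity $\sp{\partial_t v}{\mathcal{N}v}_{V_1}=\tfrac12\tfrac{d}{dt}\norm{\nabla\mathcal{N}v}_H^2$ recalled in the excerpt, yields the natural energy estimate at the $V_1^*$-level: one obtains control of $\norm{u_\eps}_{L^\infty(0,T;V_1^*)}$, $\norm{\nabla u_\eps}_{L^2(0,T;H)}$, $\eps^{1/2}\norm{u_\eps}_{L^\infty(0,T;V_1)}$, together with $L^1(\Omega\times Q)$-bounds on $w_\eps$, $\xi_\eps$, $\widehat\beta(u_\eps)$ and $\widehat{\beta^{-1}}(\xi_\eps)$, all independent of $\eps$ — here the key points are that the term $\eps\Delta\partial_t u_\eps$ produces the nonnegative boundary term $\eps\tfrac12\tfrac{d}{dt}\norm{u_\eps}_H^2$ with the right sign, that the stochastic integral has zero expectation (using that $B_\eps$ is $H_0$- or at least $H$-valued with the compatibility \eqref{data_app4} to control the mean), and that the superlinearity of $\widehat\beta$ and its conjugate in (H1) plus the Orlicz-type assumption (H4) handle the potential terms. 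A Burkholder–Davis–Gundy estimate upgrades these to $L^p(\Omega;\cdot)$ bounds for $p\in[1,2)$; one also needs a fractional-time estimate on $R_\eps u_\eps$ in some $W^{s,r}(0,T;(V_4)^*)$ coming directly from the weak formulation, to enable compactness.

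Next I would pass to the limit along a subsequence. The uniform bounds give $u_\eps\wstarto u$ in $L^\infty(0,T;L^2(\Omega;H))$, $u_\eps\wto u$ in $L^2(\Omega;L^2(0,T;V_2))$ (using also the elliptic estimate $\norm{v}_2\simeq\norm{v}_H+\norm{\Delta v}_H$), $\eps u_\eps\to 0$ in $L^2(\Omega;L^\infty(0,T;V_1))$ since $\eps\norm{u_\eps}_{L^\infty(0,T;V_1)}=\eps^{1/2}\cdot\eps^{1/2}\norm{u_\eps}_{L^\infty(0,T;V_1)}\to0$, and — by a stochastic compactness argument of Aubin–Lions–Simon type in the dual space, combined with the time-regularity of $R_\eps u_\eps$ and the fact that $\eps\Delta u_\eps\to0$ — strong convergence $u_\eps\to u$ in $L^p(\Omega;L^2(0,T;V_1))$ for $p\in[1,2)$. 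The $L^1$-bounds give, up to extracting a further (Dunford–Pettis, after checking uniform integrability via the $\widehat\beta$/$\widehat{\beta^{-1}}$ bounds) subsequence, $w_\eps\wto w$ and $\xi_\eps\wto\xi$ in $L^1(\Omega\times Q)$. One then passes to the limit in the weak formulation: the term $\eps\int_D\nabla u_\eps\cdot\nabla\varphi$ vanishes, $\int_0^t\!\int_D w_\eps\Delta\varphi$ converges by weak $L^1$ convergence against the fixed $L^\infty$ function $\Delta\varphi$ (for $\varphi\in V_4$), and $\int_0^t B_\eps(s,u_\eps)\,dW$ converges using \eqref{data_app3} together with the strong convergence of $u_\eps$ (in the multiplicative case, via the Lipschitz bound \eqref{B3}); similarly $w=-\Delta u+\xi+\pi(u)-g$ passes to the limit, $\pi(u_\eps)\to\pi(u)$ by Lipschitz continuity.

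The main obstacle is the identification $\xi\in\beta(u)$ a.e.\ in $\Omega\times Q$, since $\xi_\eps$ converges only weakly in $L^1$ and $\beta$ is merely maximal monotone and possibly singular. Here I would use the strong convergence $u_\eps\to u$ in $L^2(\Omega\times Q)$ (a consequence of the strong $L^p(\Omega;L^2(0,T;V_1))$ convergence above) together with a lower-semicontinuity / Fatou argument on the convex functional: by weak $L^1$ convergence and convexity, $\int\widehat\beta(u)+\int\widehat{\beta^{-1}}(\xi)\leq\liminf\big(\int\widehat\beta(u_\eps)+\int\widehat{\beta^{-1}}(\xi_\eps)\big)$, and one shows the reverse inequality for the pairing $\int\xi_\eps u_\eps$ by testing the equations appropriately (a limiting Itô-type computation in the dual space, as used in the well-posedness proof) so that $\int\xi u\geq\limsup\int\xi_\eps u_\eps$; combining with the Young-type identity $\xi_\eps u_\eps=\widehat\beta(u_\eps)+\widehat{\beta^{-1}}(\xi_\eps)$ a.e.\ forces $\xi u=\widehat\beta(u)+\widehat{\beta^{-1}}(\xi)$ a.e., which is exactly $\xi\in\beta(u)$. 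Finally, the regularity $u\in L^2(\Omega;C^0([0,T];V_1^*))$ and the measurability (predictability of $u$ and $\xi$, adaptedness of $w$) are recovered as in Definition~\ref{def_sol} from the limiting weak formulation and the a priori bounds, so $(u,w,\xi)$ is a strong solution of \eqref{eq1}--\eqref{eq4} with $\eps=0$.
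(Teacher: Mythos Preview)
Your overall strategy---uniform estimates, weak compactness, identification---is correct, and your first paragraph on the a priori estimates is essentially what the paper does. However, there is a genuine gap in your argument for the strong convergence $u_\eps\to u$ in $L^p(\Omega;L^2(0,T;V_1))$.

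You invoke a ``stochastic compactness argument of Aubin--Lions--Simon type,'' but Aubin--Lions gives compactness in time and space, not in $\omega$: there is no compact embedding in the probability variable, so one cannot directly extract a strongly convergent subsequence in $L^p(\Omega;\cdot)$ from the uniform bounds you list. The paper circumvents this in two steps. First, it imposes the extra regularity $B\in L^2(\Omega\times(0,T);\cL^2(U,V_4))$ (and the same uniform bound for $(B_\eps)_\eps$), which makes $B\cdot W(\omega)\in C^0([0,T];V_4)$ pathwise; this allows one to derive \emph{pathwise} estimates (for $\P$-a.e.\ fixed $\omega$, as in Section~\ref{path_est}) and then to apply the deterministic Aubin--Lions--Simon lemma on $(0,T)\times D$, giving $u_\eps(\omega)\to u(\omega)$ strongly in $L^2(0,T;V_1)$ for each such $\omega$. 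Combined with the uniform $L^2(\Omega;L^2(0,T;V_1))$ bound, a Severini--Egorov/Vitali argument then yields convergence in $L^p(\Omega;L^2(0,T;V_1))$ for every $p<2$---this, not BDG, is the reason for the restriction $p<2$. Second, the extra regularity on $B$ is removed by a triangle-inequality argument: one introduces $B_\delta:=(I-\delta\Delta)^{-2}B$ (and $B_{\eps\delta}$), applies the already-proved convergence for the smoothed data, and uses the continuous dependence estimate of Theorem~\ref{thm1}---which is uniform in $\eps$---to control $\norm{u_\eps-u_{\eps\delta}}$ and $\norm{u_\delta-u}$ by $\norm{B-B_\delta}_{L^2(\Omega\times(0,T);\cL^2(U,V_1^*))}$. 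Letting $\delta\to0$ after $\eps\to0$ closes the argument. This two-layer approximation is the mechanism missing from your sketch.

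A secondary remark: for the identification $\xi\in\beta(u)$, once you have the strong convergence of $u_\eps$ (pathwise, in $L^2(0,T;V_1)$) together with weak-$L^1$ convergence of $\xi_\eps$, the paper simply invokes Br\'ezis's result on the strong--weak closure of maximal monotone graphs. Your Minty-type argument via the Fenchel identity would also work in principle, but it is more elaborate than needed here, and the limsup inequality you need for $\int\xi_\eps u_\eps$ is itself delicate to justify without the pathwise strong convergence that you have not yet secured.
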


\begin{thm}[Asymptotics as $\eps\searrow0$, multiplicative noise]
  \label{thm4}
  Assume (H1)--(H3) and let $u_0\in L^2(\Omega, \cF_0, \P; H)$
  and $B:\Omega\times[0,T]\times V_1\to \cL^2(U, H_0)$ progressively measurable
  satisfying \eqref{B3}--\eqref{B4} (with the choice $\eps=0$).
  Let also $(u_{0\eps})_{\eps>0}$ and $(g_\eps)_{\eps>0}$ 
  satisfy conditions \eqref{data_app1}--\eqref{data_app2} and
  \beq\label{data_app5}
    (u_{0\eps})_D=(u_0)_D \qquad\forall\,\eps>0\,.
  \eeq
  For any $\eps>0$, let $(u_\eps,w_\eps,\xi_\eps)$ be any 
  strong solutions to problem \eqref{eq1}--\eqref{eq4}
  with multiplicative noise $B$ and
  data $(u_{0\eps}, g_\eps)$.
  Then there exists a strong solution $(u,w,\xi)$ to
  \eqref{eq1}--\eqref{eq4} with multiplicative noise $B$ and
  data $(u_0,g)$  in the case $\eps=0$
  such that, as $\eps\searrow0$, the convergences of 
  Theorem~\ref{thm3} hold.
\end{thm}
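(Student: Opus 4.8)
The plan is to reproduce, step by step, the argument of Theorem~\ref{thm3}, the only genuine difference being that the noise in the $\eps$-equation is now the state-dependent operator $B(\cdot,u_\eps)$ rather than a prescribed family $(B_\eps)_\eps$. First I would record that, since $B$ takes values in $\cL^2(U,H_0)$, testing the weak formulation of Definition~\ref{def_sol} with $\varphi\equiv1$ gives $(u_\eps(t))_D=(u_{0\eps})_D=(u_0)_D$ for every $t\in[0,T]$ and every $\eps>0$, by \eqref{data_app5}; hence all the $u_\eps$ (and, eventually, the limit $u$) share the same fixed mean. Next come the uniform-in-$\eps$ estimates: applying the generalized It\^o formula as in the proof of Theorem~\ref{thm3} (and of the well-posedness results) to the functionals used there --- essentially $\frac12\norm{\nabla\mathcal N(u_\eps-(u_0)_D)}_H^2$ and the one accounting for $\widehat\beta(u_\eps)$ and $\widehat{\beta^{-1}}(\xi_\eps)$, with the $\mathcal N$-dualization --- the only new contribution is the quadratic-variation term of the noise, which by \eqref{B4} is bounded by $f+N_B\norm{u_\eps}_{V_1}^2$; the resulting $N_B\norm{u_\eps}_{V_1}^2$ is absorbed into the coercive terms of the energy balance up to a remainder in the $V_1^*$-norm, handled by the Gronwall lemma after taking expectations ($N_B|(u_0)_D|^2$ being a mere constant). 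Since $f\in L^1(\Omega\times(0,T))$ and the data are bounded by \eqref{data_app1}--\eqref{data_app2}, this yields exactly the uniform bounds used for Theorem~\ref{thm3}: $u_\eps$ bounded in $L^2(\Omega;L^\infty(0,T;H))\cap L^2(\Omega;L^2(0,T;V_2))$, $\eps^{1/2}u_\eps$ bounded in $L^2(\Omega;L^\infty(0,T;V_1))$, $w_\eps$ bounded in $L^2(\Omega;L^2(0,T;V_1^*))$, and $\widehat\beta(u_\eps),\widehat{\beta^{-1}}(\xi_\eps),\xi_\eps,w_\eps$ bounded in $L^1(\Omega\times(0,T)\times D)$; moreover \eqref{B4} now gives $B(\cdot,u_\eps)$ bounded in $L^2(\Omega\times(0,T);\cL^2(U,H))$ as a consequence rather than an assumption.

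With these estimates I would carry out the compactness step of Theorem~\ref{thm3} essentially unchanged: from $d(R_\eps u_\eps)=\Delta w_\eps\,dt+B(\cdot,u_\eps)\,dW$, testing against $\varphi\in V_4$ produces a uniform time-regularity bound on $R_\eps u_\eps$ in a suitable negative-order space (H\"older in time with values in $V_4^*$); combined with the $L^2(\Omega;L^2(0,T;V_2))$ bound and with $\eps u_\eps\to0$ and $R_\eps u_\eps-u_\eps\to0$ (immediate from the previous bounds), this gives --- via the same compactness / convergence-in-probability argument used for Theorem~\ref{thm3} (e.g.\ through a Gy\"ongy--Krylov type characterization) --- a subsequence along which $u_\eps\to u$ in $L^p(\Omega;L^2(0,T;V_1))$ for every $p\in[1,2)$, together with the remaining weak and weak-$*$ convergences of $u_\eps$, $w_\eps$, $\xi_\eps$ in the statement of Theorem~\ref{thm3}. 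Passing to the limit preserves $(u(t))_D=(u_0)_D$, and $\eps u_\eps\to0$ in $L^2(\Omega;L^\infty(0,T;V_1))$ follows from $\eps^{1/2}u_\eps$ being bounded there.

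It remains to identify $(u,w,\xi)$ as a strong solution of \eqref{eq1}--\eqref{eq4} with $\eps=0$, data $(u_0,g)$ and noise $B$. This is where \eqref{B3} is used: the strong convergence of $u_\eps$ in $L^p(\Omega;L^2(0,T;V_1))$ gives $u_\eps\to u$ in probability in $L^2(0,T;V_1^*)$, whence by \eqref{B3} $B(\cdot,u_\eps)\to B(\cdot,u)$ in probability in $L^2(0,T;\cL^2(U,V_1^*))$; together with the uniform $L^2(\Omega\times(0,T);\cL^2(U,H))$ bound this lets the stochastic integral pass to the limit (in $L^1(\Omega;C^0([0,T];V_1^*))$, after a stopping-time localization if needed), so the noise term converges to $\int_0^\cdot B(s,u)\,dW$. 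Passing to the limit in the weak formulation of Definition~\ref{def_sol} --- with $\eps\int_D\nabla u_\eps\cdot\nabla\varphi\to0$ --- gives the identity with $\eps=0$. Finally, $\xi\in\beta(u)$ a.e.\ and $\widehat\beta(u)+\widehat{\beta^{-1}}(\xi)\in L^1(\Omega\times(0,T)\times D)$ follow by the usual monotonicity/Minty argument: $\xi_\eps\in\beta(u_\eps)$, $u_\eps\to u$ a.e.\ along a subsequence, $\xi_\eps\wto\xi$ in $L^1$, the weak $L^1$-compactness of $(\widehat\beta(u_\eps))_\eps$ and $(\widehat{\beta^{-1}}(\xi_\eps))_\eps$ (de la Vall\'ee--Poussin), Fatou, and the weak-strong convergence of $\int\xi_\eps u_\eps$; the same reasoning also identifies $w=-\Delta u+\xi+\pi(u)-g$. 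All the convergences listed in Theorem~\ref{thm3} are then either already proved or immediate from the estimates above and the identification of the limit.

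The step I expect to be the main obstacle is the circular coupling between the noise and the compactness: to pass to the limit in $\int_0^\cdot B(s,u_\eps)\,dW$ one already needs the strong convergence of $u_\eps$ in a topology in which \eqref{B3} transfers (convergence in probability in $L^2(0,T;V_1^*)$ is enough), while the a priori estimates alone only deliver weak convergence. Closing this loop requires the time-regularity estimate for $R_\eps u_\eps$ and the ensuing compactness argument to be pushed through \emph{uniformly} in $\eps$, and to verify that the degeneration $R_\eps\to I$ as $\eps\searrow0$ does not destroy it --- which is precisely what the controls $\eps u_\eps\to0$ and $R_\eps u_\eps-u_\eps\to0$ ensure. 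By contrast, once strong convergence of $u_\eps$ is available, transferring it through \eqref{B3} and running the monotonicity argument for $\beta$ is routine and entirely parallel to Theorem~\ref{thm3}.
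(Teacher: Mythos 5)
Your first step (the uniform estimates in expectation, obtained by absorbing the It\^o correction term via \eqref{B4} and Gronwall) matches the paper. From there, however, you take a genuinely different and considerably heavier route, and the one step you flag as ``the main obstacle'' is precisely the step the paper is engineered to avoid. You propose to extract the strong convergence $u_\eps\to u$ in $L^p(\Omega;L^2(0,T;V_1))$ by a stochastic compactness argument (fractional time-regularity of $R_\eps u_\eps$, tightness, a Gy\"ongy--Krylov identification), and you describe this as ``the same compactness argument used for Theorem~\ref{thm3}''. It is not: the compactness in the additive case is a \emph{pathwise} Aubin--Lions--Simon argument applied to $R_\eps(u_\eps-B_\eps\cdot W)$ after subtracting the stochastic convolution, and this subtraction is unavailable when the noise depends on $u_\eps$. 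Your Gy\"ongy--Krylov alternative is a legitimate method in principle (pathwise uniqueness of the $\eps=0$ limit problem is available from Theorem~\ref{thm2}, so the diagonal-support argument can be closed), but it requires an apparatus --- $W^{\alpha,p}$-in-time bounds on $\int_0^\cdot B(s,u_\eps)\,dW$, tightness in $L^2(0,T;V_1)$, identification of the limit on a new probability space --- that is nowhere developed in the paper and that you only gesture at; as written, this is the unproved core of your argument rather than a routine adaptation.

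The paper sidesteps all of this. The candidate limit $u$ is not constructed by compactness: it is the (unique in its first component) strong solution of the $\eps=0$ problem with multiplicative noise, which already exists by Theorem~\ref{thm2}. One then introduces auxiliary solutions $\tilde u_\eps$ of the \emph{viscous} problem with \emph{additive} noise $B(u)$ (admissible by the regularity of $u$ and \eqref{B4}) and data $(u_{0\eps},g)$; Theorem~\ref{thm3} gives $\tilde u_\eps\to u$ in $L^p(\Omega;L^2(0,T;V_1))$, and the continuous dependence estimate of Theorem~\ref{thm1} combined with the Lipschitz condition \eqref{B3} yields $\norm{u_\eps-\tilde u_\eps}\lesssim\norm{u_\eps-\tilde u_\eps}_{L^p(\Omega;L^2(0,t;V_1^*))}+\norm{\tilde u_\eps-u}_{L^p(\Omega;L^2(0,t;V_1^*))}$ on every subinterval, so a Gronwall argument in $t$ gives $u_\eps-\tilde u_\eps\to0$. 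This reduction to the additive case buys a complete proof with no new compactness machinery; your route, if fully carried out, would buy an existence proof for the limit problem independent of Theorem~\ref{thm2}, but at the cost of a substantial amount of stochastic compactness theory that you have not supplied.
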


\begin{rmk}
  Let us comment on the compatibility 
  assumptions \eqref{data_app4} and \eqref{data_app5},
  which require the approximating families $(u_{0\eps})_{\eps}$
  and $(B_\eps)_\eps$ to preserve the mean.
  Note that this is trivially satisfied for example in the classical 
  case of elliptic-type approximations of the data: for instance, the choices
  $u_{0\eps}:=(I-\eps\Delta)^{-1}u_0$ and $B_\eps:=(I-\eps\Delta)^{-1}B$
  easily fulfil the assumptions above.
\end{rmk}

The last results of this paper concern the regularity of the system.
In the first regularity result that we present,
we show how additional requirements on the moments of the data $(u_0,g,B)$
improve the corresponding regularity of the solutions in the classical case
of a polynomial double-well potential of degree $4$. 
This yields, in the viscous case,
some $L^2$-estimates (in space and time) on $w$ and on $\xi$.

\begin{thm}[Regularity I]
  \label{thm6}
  Let $\eps\geq0$, $p\geq 2$. Assume (H1)--(H4), \eqref{B1}--\eqref{B2} and
  \begin{gather}
  \label{q_mom1}
  u_0 \in L^p(\Omega; H)\,, \quad \eps u_0\in L^p(\Omega; V_1)\,, \\
  \label{q_mom2}
  g\in L^p(\Omega; L^2(0,T; H))\,, \qquad
  B \in L^p(\Omega; L^2(0,T; \cL^2(U, H)))\,,
  \end{gather}
  Then the unique strong solution $(u,w,\xi)$ to \eqref{eq1}--\eqref{eq4} also satisfies
  \begin{gather*}
  u \in L^p(\Omega; L^\infty(0,T; H))\cap L^p(\Omega; L^2(0,T; V_2))\,, \qquad
  \eps u \in L^p(\Omega; L^\infty(0,T; V_1))\,.
  \end{gather*}
  Furthermore, in the case $\eps>0$ and $p\geq3$, if also
  \[
  \beta:\erre\to\erre \quad\text{is single-valued}\,, \qquad
  \exists\,R>0:\quad |\beta(x)|\leq R\left(1+ |x|^3\right)\quad\forall\,x\in\erre\,,
  \]
  then
  \[
  \eps w \in L^{p/3}(\Omega; L^2(0,T; H))\,, \qquad \eps\xi\in L^{p/3}(\Omega; L^2(0,T; H))\,;
  \]
  if also
  \[
    \beta \in W^{1,\infty}_{loc}(\erre)\,, \qquad
    \exists\,R>0:\quad \beta'(x)\leq R\left(1+ |x|^2\right)\quad\text{for a.e.~}x\in\erre\,,
  \]
  then 
  \[
  \eps \xi\in L^{p/3}(\Omega; L^2(0,T; V_1))\,.
  \]
\end{thm}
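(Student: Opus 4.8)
The plan is to first upgrade the basic a priori estimate behind Theorems~\ref{thm1}--\ref{thm2} from $L^2(\Omega)$ to $L^p(\Omega)$, and then to read off the statements on $\eps w$ and $\eps\xi$ from that estimate by inserting the growth bounds on $\beta$ and invoking the Sobolev embeddings $V_1\embed L^6(D)$, $V_2\embed W^{1,6}(D)$, which hold since $N\leq3$. For the first assertion I would re-run the estimate of Section~\ref{sec:WP} at the level of the regularised problems (with $\beta$ replaced by its Yosida approximation $\beta_\lambda$), applying the generalised It\^o formula to $v\mapsto\tfrac12\ip{R_\eps^{-1}v}{v}_H$ evaluated at $v:=R_\eps u$: this quantity equals $\tfrac12\norm{u}_H^2+\tfrac\eps2\norm{\nabla u}_H^2$, and its It\^o drift is $\int_D\Delta u\,w$ (by Green's identity and the boundary conditions on $u$ and $w$) plus $\tfrac12\norm{R_\eps^{-1/2}B}_{\cL^2(U,H)}^2\leq\tfrac12\norm{B}_{\cL^2(U,H)}^2$. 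Since $w=-\Delta u+\beta_\lambda(u)+\pi(u)-g$ and $\beta_\lambda'\geq0$, one has $\int_D\Delta u\,w=-\norm{\Delta u}_H^2-\int_D\beta_\lambda'(u)\abs{\nabla u}^2+\int_D\Delta u\,\bigl(\pi(u)-g\bigr)\leq-\tfrac12\norm{\Delta u}_H^2+C\bigl(\norm{u}_H^2+\norm{g}_H^2\bigr)$, whence
\[
  d\Bigl(\tfrac12\norm{u}_H^2+\tfrac\eps2\norm{\nabla u}_H^2\Bigr)+\tfrac12\norm{\Delta u}_H^2\,dt
  \leq C\bigl(\norm{u}_H^2+\norm{g}_H^2\bigr)\,dt+\ip{u}{B\,dW}_H+\tfrac12\norm{B}_{\cL^2(U,H)}^2\,dt\,.
\]
Raising to the power $p/2$ (here $p\geq2$), taking the supremum over $[0,t]$, estimating the scalar martingale $\int_0^\cdot\ip{u}{B\,dW}_H$ (whose quadratic variation is bounded by $\int_0^\cdot\norm{B}_{\cL^2(U,H)}^2\norm{u}_H^2$) by the Burkholder--Davis--Gundy and Young inequalities, absorbing the resulting $\sup$-term, and then applying Gronwall's lemma with \eqref{q_mom1}--\eqref{q_mom2}, one gets a bound uniform in $\lambda$ for $\norm{u}_{L^p(\Omega;L^\infty(0,T;H))}$, $\eps^{1/2}\norm{u}_{L^p(\Omega;L^\infty(0,T;V_1))}$ and $\norm{\Delta u}_{L^p(\Omega;L^2(0,T;H))}$; passing to the limit $\lambda\searrow0$ by lower semicontinuity of the norms yields the first assertion. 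I expect this step to be the main obstacle, since it requires carrying out the dualisation of Section~\ref{sec:WP} carefully at the $L^p(\Omega)$ level and controlling the stochastic integral before closing the estimate; the remaining assertions are then elementary.

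For the second assertion, with $\beta$ single-valued (hence continuous) and $|\beta(r)|\leq R(1+|r|^3)$, one has $\xi=\beta(u)$ a.e., and by $V_1\embed L^6(D)$,
\[
  \norm{\xi(t)}_H^2\lesssim 1+\norm{u(t)}_{L^6(D)}^6\lesssim 1+\norm{u(t)}_{V_1}^6\leq 1+\norm{u}_{L^\infty(0,T;V_1)}^4\,\norm{u(t)}_{V_1}^2\,,
\]
so that, integrating in time and multiplying by $\eps^2$,
\[
  \eps^2\norm{\xi}_{L^2(0,T;H)}^2\lesssim_\eps 1+\norm{\eps u}_{L^\infty(0,T;V_1)}^4\,\norm{u}_{L^2(0,T;V_1)}^2\,.
\]
Taking the $(p/6)$-th power, splitting the product by Young's inequality with exponents $\tfrac32$ and $3$ (admissible since $p\geq3$), and taking expectations, the first assertion gives $\eps\xi\in L^{p/3}(\Omega;L^2(0,T;H))$. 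Since, by the first assertion, (H2) and \eqref{q_mom2}, the terms $\eps\Delta u$, $\eps\pi(u)$ and $\eps g$ all belong to $L^p(\Omega;L^2(0,T;H))\embed L^{p/3}(\Omega;L^2(0,T;H))$ (as $\P$ is finite), the identity $\eps w=-\eps\Delta u+\eps\xi+\eps\pi(u)-\eps g$ yields $\eps w\in L^{p/3}(\Omega;L^2(0,T;H))$.

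For the last assertion, with $\beta\in W^{1,\infty}_{loc}(\erre)$ and $\beta'(r)\leq R(1+|r|^2)$, the gradient $\nabla\xi=\beta'(u)\nabla u$ is well defined since $u\in V_2$ a.e., and by H\"older's inequality with exponents $\tfrac32$ and $3$ together with $V_1\embed L^6(D)$ and $V_2\embed W^{1,6}(D)$,
\[
  \norm{\nabla\xi(t)}_H^2\lesssim\norm{\nabla u(t)}_H^2+\norm{u(t)}_{L^6(D)}^4\,\norm{\nabla u(t)}_{L^6(D)}^2\lesssim\bigl(1+\norm{u}_{L^\infty(0,T;V_1)}^4\bigr)\norm{u(t)}_{V_2}^2\,.
\]
Integrating in time, multiplying by $\eps^2$, taking the $(p/6)$-th power and using Young's inequality and the first assertion exactly as above, one obtains $\eps\nabla\xi\in L^{p/3}(\Omega;L^2(0,T;H))$, which together with $\eps\xi\in L^{p/3}(\Omega;L^2(0,T;H))$ from the second assertion gives $\eps\xi\in L^{p/3}(\Omega;L^2(0,T;V_1))$.
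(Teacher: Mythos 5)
Your proposal is correct and follows essentially the same route as the paper: the $L^p(\Omega)$ bound is obtained by taking the $p/2$-power of It\^o's formula for $\tfrac12\norm{u_\lambda}_H^2+\tfrac\eps2\norm{\nabla u_\lambda}_H^2$ at the Yosida-approximated level (dropping the nonnegative term $\int\beta_\lambda'(u_\lambda)|\nabla u_\lambda|^2$), closing with Burkholder--Davis--Gundy and Gronwall, and passing to the limit by lower semicontinuity; the statements on $\eps\xi$ and $\eps w$ then follow from the cubic/quadratic growth of $\beta$, $\beta'$ together with $V_1\embed L^6(D)$, $V_2\embed W^{1,6}(D)$ and a H\"older/Young splitting in $\Omega$ with exponents summing to $3/p$. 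The only cosmetic difference is that you perform the growth estimates directly on the limit solution $(u,\xi)$ rather than on $(u_\lambda,\beta_\lambda(u_\lambda))$ as the paper does, which is legitimate here since $u(t)\in V_2\embed C^0(\overline D)$ and $\beta\in W^{1,\infty}_{loc}(\erre)$ justify the chain rule at the limit level.
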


We are now ready to present the second regularity result.
Note that in the Definition~\ref{def_sol} of strong solution 
the component $w$ of the chemical potential is only $L^1$ globally.
However, this forces to have a variational formulation of the problem where 
all the spatial derivatives are shifted on the test function, whereas the most natural 
and classical formulation in the context of Cahn-Hilliard equations
involves the gradient of the chemical potential $w$. Such a formulation is 
much more natural and effective both in terms of applications and from
a mathematical point of view: indeed, it provides better estimates on the 
solutions, which are used in turn in a wide variety of concrete situations, such as,
among all, optimal control problems.
The possibility of writing a more natural variational formulation of the system,
hence to give sense in a suitable way to $\nabla w$, is strictly connected to 
the possibility of writing a It\^o-type formula, or, better said, inequality, 
to the so-called free-energy functional of the system, defined as
\[
  \mathcal E(u):=\frac12\int_D|\nabla u|^2 + \int_D\widehat\beta(u) + \int_D\widehat\pi(u)\,.
\]
In order to achieve so, further assumptions on the data are needed. We collect such a
regularity result here.

\begin{thm}[Regularity II]
  \label{thm5}
  Let $\eps\geq0$, $q\geq2$. Assume (H1)--(H4), \eqref{B1}--\eqref{B2} and
  \begin{gather}
  \label{reg1}
  \beta:\erre\to\erre \quad\text{is single-valued}\,, \qquad \beta\in W^{1,\infty}_{loc}(\erre)\,,\\
  \label{reg2}
  \exists\,R>0:\quad\beta(x)\leq R\left(1+\widehat\beta(x)\right)\quad\forall\,x\in\erre\,,\\
  \label{reg3}
  g \in L^q(\Omega; L^2(0,T; V_1))\,,\\
  \label{reg4}
  u_0 \in L^q(\Omega,\cF_0, \P; V_1)\,, \qquad \widehat\beta(u_0) \in L^q(\Omega,\cF_0,\P; L^1(D))\,,\\
  \label{reg5}
  R_\eps^{-1}B \in L^q(\Omega; L^2(0,T; \cL^2(U,V_1)))\,, \quad
  B\in L^q(\Omega; L^\infty(0,T; \cL^2(U, V_1^*)))\,.
  \end{gather}
  Assume also one condition between \eqref{ip_reg3}--\eqref{ip_reg4}
  and one between \eqref{ip_reg1}--\eqref{ip_reg2}, where
  \begin{gather}
  \label{ip_reg3}
  B \in L^\infty(\Omega\times(0,T); \cL^2(U,V_1^*))\,,\\
  \label{ip_reg4}
  B \in L^q(\Omega; L^\infty(0,T; \cL^2(U, V_{1,0}^*)))\,,
  \end{gather}
  and
  \begin{gather}
    \label{ip_reg1}
    \begin{cases}
    \exists\,R>0:\quad \beta'(x)\leq R\left(1+ |x|^2\right)\quad\text{for a.e.~}x\in\erre\,,\\
    R_\eps^{-1}B \in L^\infty(\Omega\times(0,T); \cL^2(U,H)) + L^q(0,T; L^\infty(\Omega; \cL^2(U,V_{N/6})))\,,
    \end{cases}\\
    \label{ip_reg2}
    \begin{cases}
    \exists\,R>0:\quad \beta'(x)\leq R\left(1+\widehat\beta(x)\right) \quad\text{for a.e.~}x\in\erre\,,\\
    \exists\,s>\frac{N}{2}: \quad R_\eps^{-1}B \in L^q(0,T; L^\infty(\Omega; \cL_2(U,V_{s})))\,.
    \end{cases}
  \end{gather}
  Then the unique strong solution $(u,w,\xi)$ to \eqref{eq1}--\eqref{eq4} satisfies
  \begin{gather*}
  u\in L^q(\Omega; C^0([0,T]; H))\cap L^q(\Omega; L^\infty(0,T;V_1))\cap L^q(\Omega; L^2(0,T; V_2))\,,\\
  R_\eps^{-1}w \in L^{q/2,q}(\Omega; L^2(0,T; V_1))\,, \qquad 
  \eps \Delta R_\eps^{-1}w\in L^q(\Omega; L^2(0,T; H))\,,\\
  \xi\in L^{q/2}(\Omega; L^2(0,T; H))\cap L^{q/2}(\Omega; L^\infty(0,T; L^1(D)))\,,\\
  \widehat\beta(u)\in L^{q/2}(\Omega; L^\infty(0,T; L^1(D)))\,, \qquad
  \widehat{\beta^{-1}}(\xi) \in L^1(\Omega\times(0,T)\times D)\,,
  \end{gather*}
  and the following variational formulation of \eqref{eq1}--\eqref{eq4} holds:
  \begin{gather*}
    \begin{split}
    \int_D u(t)\varphi &+ \eps\int_D\nabla u(t)\cdot\nabla\varphi 
    + \int_{Q_t}\nabla R_\eps^{-1}w(t)\cdot\nabla R_\eps\varphi\\
    &=\int_Du_0\varphi+\eps\int_D\nabla u_0\cdot\nabla\varphi
    +\int_D\left(\int_0^tB(s)\,dW(s)\right)\varphi \qquad\forall\,\varphi\in V_3\,,
    \end{split}\\
    \int_Dw(t)\varphi = \int_D\nabla u(t)\cdot\nabla\varphi + \int_D\xi(t)\varphi + \int_D\pi(u(t))\varphi - \int_Dg(t)\varphi
    \qquad\forall\,\varphi\in V_1\,,
  \end{gather*}
  for every $t\in[0,T]$ and almost every $t\in(0,T)$, respectively, $\P$-almost surely.
  Furthermore if \eqref{ip_reg1} is in order and $q\geq3$, it also holds that 
  \[
  \xi \in L^{q/2,q/3}(\Omega; L^2(0,T; V_1))\,.
  \]
  If also $\eps=0$ we have
  \[u \in L^{q/3}(\Omega; L^2(0,T; V_3))\,.
  \]
\end{thm}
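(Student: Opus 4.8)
The plan is to run the whole argument on the regularised problem of Section~\ref{sec:WP} --- $\beta$ replaced by its Yosida approximation $\beta_\lambda$ (further smoothed so that $\widehat\beta_\lambda\in C^2$), $B$ by an elliptic regularisation $B_\lambda$, the data regularised accordingly --- whose solutions $(u_\lambda,w_\lambda,\xi_\lambda)$, with $\xi_\lambda=\beta_\lambda(u_\lambda)$, are smooth enough to legitimate the computations below; I then extract $\lambda$- and $\eps$-uniform bounds and pass to the limit $\lambda\searrow0$ exactly as in the proofs of Theorems~\ref{thm1}--\ref{thm2}, the uniqueness proved there guaranteeing that the limit is the solution. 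The new device is an It\^o formula for the regularised free energy $\mathcal E_\lambda(v):=\tfrac12\norm{\nabla v}_H^2+\int_D\widehat\beta_\lambda(v)+\int_D\widehat\pi(v)$. For $\eps>0$ the It\^o differential is carried not by $u_\lambda$ but by $R_\eps u_\lambda$, with $d(R_\eps u_\lambda)=\Delta w_\lambda\,dt+B_\lambda\,dW$, so I would apply It\^o's formula (in the generalised dual-space form of Section~\ref{sec:WP}, if needed) to the map $v\mapsto\mathcal E_\lambda(R_\eps^{-1}v)$ evaluated along $R_\eps u_\lambda$ (for $\eps=0$ this is simply $\mathcal E_\lambda(u_\lambda)$): by self-adjointness of $R_\eps^{-1}$ the first derivative of this map is $R_\eps^{-1}\mathcal E_\lambda'(u_\lambda)=R_\eps^{-1}(w_\lambda+g)$ and the second is $R_\eps^{-1}\mathcal E_\lambda''(u_\lambda)R_\eps^{-1}$ with $\mathcal E_\lambda''(u_\lambda)=-\Delta+\beta_\lambda'(u_\lambda)+\pi'(u_\lambda)$.

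The drift coming from the first-order term is $\ip{R_\eps^{-1}(w_\lambda+g)}{\Delta w_\lambda}_{V_1}$; writing $w_\lambda=R_\eps(R_\eps^{-1}w_\lambda)$, commuting $R_\eps$ with $\Delta$ and integrating by parts (using $\partial_{\bf n}w_\lambda=0$), this equals $-\norm{\nabla R_\eps^{-1}w_\lambda}_H^2-\eps\norm{\Delta R_\eps^{-1}w_\lambda}_H^2-\int_D\nabla g\cdot\nabla R_\eps^{-1}w_\lambda$, so it yields precisely the dissipation claimed, the $g$-term being absorbed by Young thanks to \eqref{reg3}. In the It\^o correction $\tfrac12\sum_k\bigl(\norm{\nabla R_\eps^{-1}B_\lambda e_k}_H^2+\int_D(\beta_\lambda'(u_\lambda)+\pi'(u_\lambda))\,|R_\eps^{-1}B_\lambda e_k|^2\bigr)$ the first summand and the $\pi'$-contribution are bounded by $C\,\norm{R_\eps^{-1}B_\lambda}_{\cL^2(U,V_1)}^2$ using $|\pi'|\le C_\pi$ and \eqref{reg5}. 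The hard part is the surviving term $\tfrac12\sum_k\int_D\beta_\lambda'(u_\lambda)\,|R_\eps^{-1}B_\lambda e_k|^2$: it is nonnegative, on the wrong side of the identity, and $\beta_\lambda'$ carries no a priori bound, so it can only be handled through the structural hypotheses. Under \eqref{ip_reg2}, $\beta_\lambda'\le R(1+\widehat\beta_\lambda)$ and $R_\eps^{-1}B_\lambda e_k\in V_s\embed L^\infty(D)$ ($s>N/2$) give the bound $C\,\norm{R_\eps^{-1}B_\lambda}_{\cL^2(U,V_s)}^2\bigl(1+\mathcal E_\lambda(u_\lambda)\bigr)$, with the prefactor in $L^1(0,T)$ uniformly by \eqref{ip_reg2}; this is compatible with a Gronwall argument. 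Under \eqref{ip_reg1}, $\beta_\lambda'\le R(1+|u_\lambda|^2)$, and splitting $R_\eps^{-1}B_\lambda$ along the decomposition in \eqref{ip_reg1} and using $V_1\embed L^6(D)$, $V_{N/6}\embed L^3(D)$ with H\"older, the term $\int_D|u_\lambda|^2|R_\eps^{-1}B_\lambda e_k|^2$ is absorbed into a small multiple of $\norm{\nabla u_\lambda}_H^2$ plus integrable-coefficient terms.

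It then remains to close the estimate and recover the elliptic bounds. For the martingale $\int_0^{\cdot}\ip{R_\eps^{-1}(w_\lambda+g)}{B_\lambda\,dW}_{V_1}$ the quadratic variation is $\le\int_0^{\cdot}\norm{R_\eps^{-1}(w_\lambda+g)}_{V_1}^2\norm{B_\lambda}_{\cL^2(U,V_1^*)}^2$, so after Burkholder-Davis-Gundy and Young a small multiple of $\norm{\nabla R_\eps^{-1}w_\lambda}_{L^2(0,T;H)}^2$ is reabsorbed into the dissipation and the rest is controlled through \eqref{reg3}, \eqref{reg5} and one of \eqref{ip_reg3}--\eqref{ip_reg4} (which also ensure that $(u_\lambda)_D$ has bounded moments). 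A stochastic Gronwall lemma then yields, uniformly in $\lambda$ and $\eps$,
\[
  \E\norm{\mathcal E_\lambda(u_\lambda)}_{C^0([0,T])}^{q/2}
  +\E\norm{\nabla R_\eps^{-1}w_\lambda}_{L^2(0,T;H)}^{q}
  +\eps^{q/2}\E\norm{\Delta R_\eps^{-1}w_\lambda}_{L^2(0,T;H)}^{q}\le C\,,
\]
hence $u_\lambda$ is bounded in $L^q(\Omega;L^\infty(0,T;V_1))$ --- via the inequality $\norm{v}_H^2\le\eta\norm{\nabla v}_H^2+C_\eta\norm{v}_*^2$ and the a priori $V_1^*$-estimate of Section~\ref{sec:WP} --- and $\widehat\beta_\lambda(u_\lambda)$ in $L^{q/2}(\Omega;L^\infty(0,T;L^1(D)))$.

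To get the remaining bounds I use $w_\lambda=-\Delta u_\lambda+\xi_\lambda+\pi(u_\lambda)-g$. Since $\Delta u_\lambda$ has zero mean it depends only on the mean-null part of $w_\lambda-\pi(u_\lambda)+g$, whose $H$-norm is $\lesssim\norm{\nabla R_\eps^{-1}w_\lambda}_H+\eps^{1/2}\bigl(\eps^{1/2}\norm{\Delta R_\eps^{-1}w_\lambda}_H\bigr)+\norm{u_\lambda}_H+\norm{g}_H$; testing the relation by $-\Delta u_\lambda$ and by $\xi_\lambda$ and using $\int_D(-\Delta u_\lambda)\beta_\lambda(u_\lambda)=\int_D\beta_\lambda'(u_\lambda)|\nabla u_\lambda|^2\ge0$ gives $u\in L^q(\Omega;L^2(0,T;V_2))$, and --- once the mean $(w_\lambda)_D$ is controlled by bounding $\norm{\beta_\lambda(u_\lambda)}_{L^1(D)}$ through the superlinearity of $\widehat{\beta^{-1}}$ from (H1), the energy bound and the identity obtained by testing the relation by $u_\lambda-(u_\lambda)_D$ (licit since $D(\beta)=\erre$) --- also $\xi\in L^{q/2}(\Omega;L^2(0,T;H)\cap L^\infty(0,T;L^1(D)))$, $R_\eps^{-1}w\in L^{q/2,q}(\Omega;L^2(0,T;V_1))$ and $\eps\Delta R_\eps^{-1}w\in L^q(\Omega;L^2(0,T;H))$; $u\in L^q(\Omega;C^0([0,T];H))$ follows by interpolating the $L^\infty(0,T;V_1)$-bound with the $V_1^*$-continuity given by the equation. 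Passing to the limit $\lambda\searrow0$ by weak and weak-$*$ compactness, strong convergence of $u_\lambda$, monotonicity (to identify $\xi\in\beta(u)$) and lower semicontinuity (for $\widehat\beta$ and $\widehat{\beta^{-1}}$), as in Section~\ref{sec:WP}, gives all the announced regularity; the natural variational formulation is obtained by writing $-\Delta w=-R_\eps\Delta R_\eps^{-1}w$, integrating by parts against $\varphi\in V_3$ (so that $R_\eps\varphi\in V_1$) and using a density argument. Since every bound above is uniform in $\eps$, the pure case $\eps=0$ follows either directly or, for the estimates that require it, from Theorems~\ref{thm3}--\ref{thm4} together with uniqueness for the pure equation. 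Finally, if \eqref{ip_reg1} holds and $q\ge3$, then from $\nabla\xi_\lambda=\beta_\lambda'(u_\lambda)\nabla u_\lambda$, $\beta_\lambda'(u_\lambda)\le R(1+|u_\lambda|^2)$, $u_\lambda\in L^\infty(0,T;V_1)\embed L^\infty(0,T;L^6(D))$ and $\nabla u_\lambda\in L^2(0,T;L^6(D;\erre^N))$ (from $V_2\embed W^{1,6}(D)$), H\"older in $\Omega$ with the exponents already obtained gives $\nabla\xi_\lambda\in L^{q/3}(\Omega;L^2(0,T;H^N))$, i.e.\ $\xi\in L^{q/2,q/3}(\Omega;L^2(0,T;V_1))$; and when $\eps=0$ one reads off $-\Delta u=w-\xi-\pi(u)+g\in L^2(0,T;V_1)$ with worst $\Omega$-integrability $L^{q/3}$ coming from $\xi$, hence $u\in L^{q/3}(\Omega;L^2(0,T;V_3))$, the claimed $H^3$-regularity in space. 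Throughout, the genuine difficulty is the $\beta_\lambda'$ It\^o-correction term, which is what forces the precise growth/Orlicz conditions \eqref{ip_reg1}--\eqref{ip_reg2} and the sharp Sobolev embeddings, all needed uniformly in $\lambda$ and $\eps$.
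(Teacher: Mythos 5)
Your proposal is correct and follows essentially the same route as the paper: Itô's formula for the regularised free energy composed with $R_\eps^{-1}$ evaluated along $R_\eps u_\lambda$, yielding the dissipation $\norm{\nabla R_\eps^{-1}w_\lambda}_{L^2}^2+\eps\norm{\Delta R_\eps^{-1}w_\lambda}_{L^2}^2$, with the $\beta_\lambda'$ Itô-correction handled under \eqref{ip_reg1}/\eqref{ip_reg2} via exactly the Sobolev embeddings you cite, the mean $(w_\lambda)_D$ controlled through the $L^1$-bound on $\beta_\lambda(u_\lambda)$, and the final $V_1$- and $V_3$-claims obtained by the same comparison and elliptic-regularity arguments. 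The only point you gloss over is that the paper inserts an additional mollification by $(I-\sigma\Delta)^{-2}$ (removed afterwards) to make the Itô formula rigorous, since $D\mathcal E_\lambda(R_\eps u_\lambda)$ need not lie in $V_2$; you acknowledge the need for such a justification, so this is a technical omission rather than a gap.
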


\begin{rmk}
  Let us stress that the regularity result obtained here
  allows to give a natural variational formulation to the problem which 
  is more usable in practice in the context of Cahn-Hilliard-related problem
  (for example, optimal control problems). The variety of assumptions
  provided on $\beta$ and $B$ allow also to obtain such regularity properties 
  also in the degenerate cases where $B$ has not null-mean 
  and $\beta$ is not necessarily a polynomial function.
  We point out that the hypotheses on the operator $B$ are satisfied, 
  for example, when $B$ is independent of $\omega$ and $t$, and takes
  values in the smoother spaces of the form $\cL^2(U,V_s)$ for a suitable $s$.
  More specifically, the choice between \eqref{ip_reg3}--\eqref{ip_reg4}
  is motivated by the wish of giving an appropriate regularity result 
  also in the more difficult case where $B$ is not necessarily of null mean.
  On the other side, in \eqref{ip_reg1}--\eqref{ip_reg2} we are 
  distinguishing between the possibly different growth of $\beta$:
  the first case is the classical one corresponding to a polynomial 
  double-well potential of degree 4, while in the second we only require 
  $\beta'$ to be controlled by $\widehat\beta$, allowing thus 
  also first-order exponential growth for example.
\end{rmk}

Finally, the combination of Theorems~\ref{thm6}--\ref{thm5} yields
as a direct consequence some refined regularity results for the solutions.

\begin{cor}[Further regularity]
  Let $\eps=0$ and $p,q\geq3$. 
  Assume the hypotheses (H1)--(H4), \eqref{B1}--\eqref{B2}, \eqref{reg1}--\eqref{reg5}, \eqref{ip_reg1}, 
  \eqref{q_mom1}--\eqref{q_mom2} and one condition
  between \eqref{ip_reg3}--\eqref{ip_reg4}. Then the unique strong solution $(u,w,\xi)$ to
  \eqref{eq1}--\eqref{eq4} satisfies
  \begin{gather*}
  u\in L^{p\vee q}\left(\Omega; C^0([0,T]; H)\cap L^2(0,T; V_2)\right)
  \cap L^q(\Omega;L^\infty(0,T; V_1))
  \cap L^{q/3}(\Omega; L^2(0,T; V_3))\,,\\
  w \in L^{q/2,q}(\Omega; L^2(0,T; V_1))\,,\\
  \xi \in L^{q/2, q/3}(\Omega; L^2(0,T; V_1))\cap L^{q/2}(\Omega; L^\infty(0,T; L^1(D)))\,,\\
  \widehat\beta(u) \in L^{q/2}(\Omega; L^\infty(0,T; L^1(D)))\,,\qquad
  \widehat{\beta^{-1}}(\xi) \in L^1(\Omega\times(0,T)\times D)\,.
  \end{gather*}
\end{cor}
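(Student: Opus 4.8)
Here is my plan.

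\medskip

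The corollary is purely a matter of assembling Theorems~\ref{thm6} and~\ref{thm5}, so the plan is to check that every hypothesis they invoke is among those assumed here and then to intersect the two lists of conclusions. Since $\eps=0$ and $(u_0,g,B)$ satisfy (H1)--(H4) and \eqref{B1}--\eqref{B2}, Theorem~\ref{thm1} is in force; in particular there is a \emph{unique} strong solution $(u,w,\xi)$, so the statement is meaningful and the solution whose regularity is claimed is the one to which Theorems~\ref{thm6} and~\ref{thm5} are applied. Applying Theorem~\ref{thm5} with the given exponent $q\geq3$ --- all its hypotheses hold: (H1)--(H4), \eqref{B1}--\eqref{B2}, \eqref{reg1}--\eqref{reg5}, one condition among \eqref{ip_reg3}--\eqref{ip_reg4}, and \eqref{ip_reg1} (one of the two admissible conditions \eqref{ip_reg1}--\eqref{ip_reg2}) --- yields
\[
u\in L^q(\Omega; C^0([0,T]; H))\cap L^q(\Omega; L^\infty(0,T;V_1))\cap L^q(\Omega; L^2(0,T; V_2))\,,
\]
together with $w=R_\eps^{-1}w\in L^{q/2,q}(\Omega; L^2(0,T; V_1))$, $\xi\in L^{q/2}(\Omega; L^2(0,T; H))\cap L^{q/2}(\Omega; L^\infty(0,T; L^1(D)))$, $\widehat\beta(u)\in L^{q/2}(\Omega; L^\infty(0,T; L^1(D)))$, $\widehat{\beta^{-1}}(\xi)\in L^1(\Omega\times(0,T)\times D)$, and the two variational identities of Theorem~\ref{thm5} (the term $\eps\Delta R_\eps^{-1}w$ being identically $0$). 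Since \eqref{ip_reg1} is assumed and $q\geq3$, Theorem~\ref{thm5} also gives $\xi\in L^{q/2,q/3}(\Omega; L^2(0,T; V_1))$, and since $\eps=0$ it gives $u\in L^{q/3}(\Omega; L^2(0,T; V_3))$.

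Next I would apply Theorem~\ref{thm6} with the given exponent $p\geq3\geq2$: the hypotheses (H1)--(H4), \eqref{B1}--\eqref{B2}, \eqref{q_mom1}--\eqref{q_mom2} are all assumed, while the further structural requirements on $\beta$ appearing in Theorem~\ref{thm6} are used only in the viscous case $\eps>0$ and hence are not needed here. This produces $u\in L^p(\Omega; L^\infty(0,T; H))\cap L^p(\Omega; L^2(0,T; V_2))$.

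Finally I would merge the two families of bounds on $u$. By Theorem~\ref{thm5} the process $u$ has $\P$-a.s.\ continuous paths in $H$, so $\norm{u}_{L^\infty(0,T;H)}=\norm{u}_{C^0([0,T];H)}$ for $\P$-a.e.\ $\omega$; therefore the bound $u\in L^p(\Omega; L^\infty(0,T;H))$ upgrades to $u\in L^p(\Omega; C^0([0,T];H))$. Combining this with $u\in L^q(\Omega; C^0([0,T];H))$ and with the two bounds $u\in L^p(\Omega;L^2(0,T;V_2))$ and $u\in L^q(\Omega;L^2(0,T;V_2))$ gives $u\in L^{p\vee q}\bigl(\Omega; C^0([0,T];H)\cap L^2(0,T;V_2)\bigr)$, which together with the conclusions listed above is exactly the assertion. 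There is no genuine difficulty: the only step that is not a mere relabelling is the $L^{p\vee q}$ upgrade, and it rests solely on the $H$-path continuity furnished by Theorem~\ref{thm5}; the mutual consistency of the hypothesis sets of the two theorems is built in, since both impose only moment and regularity conditions on the common data $(u_0,g,B,\beta)$.
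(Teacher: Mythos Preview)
Your proposal is correct and matches the paper's intended approach: the corollary is stated explicitly as a direct combination of Theorems~\ref{thm6} and~\ref{thm5}, and the paper gives no separate proof. Your only addition beyond straightforward bookkeeping---using the $H$-path continuity from Theorem~\ref{thm5} to upgrade the $L^p(\Omega;L^\infty(0,T;H))$ bound from Theorem~\ref{thm6} to $L^p(\Omega;C^0([0,T];H))$ before taking the maximum exponent---is a clean way to justify the $L^{p\vee q}$ claim that the paper leaves implicit.
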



\section{Well-posedness}
\setcounter{equation}{0}
\label{sec:WP}

This section is devoted to the proof of well-posedness
in the viscous case $\eps>0$ (the pure case $\eps=0$ has been studied in \cite{scar-SCH}).

We shall consider first the additive noise case, i.e.~when
$B$ is a $\cL^2(U,H)$-valued
progressively measurable process such that 
\[
  B \in L^2(\Omega\times(0,T); \cL^2(U, H))\,.
\]
The idea to prove existence of solutions
is to use a double approximation on the data of the problem:
the first one consists is smoothing the coefficient $B$ is a suitable way, 
and the second is the natural Yosida approximation on the nonlinearity.
In order to avoid heavy notations, we proceed in the following way:
we assume a further regularity on
$B$ is a first step, namely that 
\beq\label{hyp_B}
  B \in L^2(\Omega\times(0,T); \cL^2(U, V_4))
\eeq
and we prove well-posedness 
regularizing the nonlinearity $\beta$ through its Yosida approximation.
Finally, we remove the additional assumption \eqref{hyp_B}
using a second approximation of elliptic type on the noise.

The generalization to multiplicative noise is carried out at the end of the section.

Let us work now under the additional assumption \eqref{hyp_B} and $\eps>0$ fixed.

\subsection{The approximation}
\label{approximation}
For any positive $\lambda$, we denote by
$\beta_\lambda:\erre\to\erre$ and $\widehat\beta_\lambda:\erre\to[0,+\infty)$ 
the Yosida approximation of $\beta$ and
the Moreau-Yosida regularization of $\widehat\beta$, respectively. 
The approximated problem is the following:
\begin{align}
  \label{eq1_app}
  d(u_\lambda-\eps\Delta u_\lambda)(t) - \Delta w_\lambda(t)\,dt = B(t)\,dW(t) \qquad&\text{in } (0,T)\times D\,,\\
  \label{eq2_app}
  w_\lambda = -\Delta u_\lambda + \beta_\lambda(u_\lambda) + \pi(u_\lambda) - g \qquad&\text{in } (0,T)\times D\,,\\
  \label{eq3_app}
  \partial_{\bf n} u_\lambda=0\,, \quad \partial_{\bf n}w_\lambda=0 \qquad&\text{in } (0,T)\times\partial D\,,\\
  \label{eq4_app}
  u_\lambda(0)=u_0 \qquad&\text{in } D\,.
\end{align}
Bearing in mind Definition~\ref{def_sol}, a strong solution to the 
approximated problem is a couple $(u_\lambda, w_\lambda)$, where
$u_\lambda$ is a $V_1$-valued adapted process, $w_\lambda$ is 
an $H$-valued adapted process, such that 
\begin{gather*}
  u_\lambda \in L^2(\Omega; C^0([0,T]; V_1)) \cap L^2(\Omega; L^2(0,T; V_2))\,, \\
  w_\lambda=-\Delta u_\lambda + \beta_\lambda(u_\lambda) + \pi(u_\lambda) - g \in L^2(\Omega; L^2(0,T; H))
\end{gather*}
and satisfying, for every $t\in[0,T]$, $\P$-almost surely, 
\begin{align*}
    \int_Du_\lambda(t)\varphi &+\eps\int_D\nabla u_\lambda(t)\cdot \nabla\varphi 
    -\int_0^t\!\!\int_D w_\lambda(s)\Delta\varphi \,ds\\
    &=\int_Du_0\varphi + \eps\int_D\nabla u_0\cdot\nabla \varphi + 
    \int_D\left(\int_0^tB(s)\,dW(s)\right)\varphi \qquad\forall\,\varphi\in V_2\,.
  \end{align*}
It is natural introduce the $V_1$-bilinear form
\[
  (v_1, v_2)_{1,\eps}:={}_{V_1^*}\!\ip{v_1-\eps\Delta v_1}{v_2}_{V_1}=
  \int_Dv_1v_2 + \eps\int_D\nabla v_1\cdot\nabla v_2\,, \qquad v_1, v_2\in V_1\,,
\]
and to define the operator $\mathcal A_\lambda:\Omega\times[0,T]\times V_2\to V_2^*$ as
\begin{align*}
  {}_{V_2^*}\!\ip{\mathcal A_\lambda (\omega,t,v)}{\varphi}_{V_2}&:=
  \int_D\Delta v\Delta \varphi - \int_D\beta_\lambda(v)\Delta\varphi - 
  \int_D\pi(v)\Delta\varphi + \int_D g(\omega,t)\Delta\varphi\,,\\
  &\quad(\omega,t)\in\Omega\times[0,T]\,, \quad v,\varphi\in V_2\,.
\end{align*}
Setting also $B_\eps:=(I-\eps\Delta)^{-1}B$, it is readily seen that 
the variational formulation of the approximated problem 
can be rewritten as
\[
  (u_\lambda(t), \varphi)_{1,\eps} 
  + \int_0^t{}_{V_2^*}\!\ip{\mathcal A_\lambda (s,u_\lambda(s))}{\varphi}_{V_2}\,ds
  =(u_0, \varphi)_{1,\eps} + \left(\int_0^tB_\eps(s)\,dW(s), \varphi\right)_{1,\eps}
\]
for every $\varphi\in V_2$.

We shall need the some properties of $\mathcal{A}_\lambda$, 
collected in the following lemma.
\begin{lem}
  \label{lm:A_lam}
  For every $\lambda>0$, the operator $\mathcal A_\lambda:\Omega\times[0,T]\times V_2\to V_2^*$
  is progressively measurable and satisfies the following conditions:
  \begin{itemize}
    \item hemicontinuity: the map
    $r\mapsto {}_{V_2^*}\!\ip{\mathcal A_\lambda(\omega,t,v_1+r v_2)}{\varphi}_{V_2}$, $r\in\erre$,
    is continuous for every 
    $(\omega,t)\in\Omega\times[0,T]$ and $v_1,v_2,\varphi\in V_2$;
    \item weak monotonicity: there exists $c>0$ such that,
    for every 
    $(\omega,t)\in\Omega\times[0,T]$,
    \[
    {}_{V_2^*}\!\ip{\mathcal A_\lambda (\omega,t,v_1)-\mathcal A_\lambda (\omega,t,v_2)}{v_1-v_2}_{V_2}
    \geq -c\norm{v_1-v_2}_{H}^2 \qquad\forall\,v_1,v_2\in V_2\,;
    \]
    \item weak coercivity: there exist $c_1,c_1'>0$ and an adapted process $f_1 \in L^1(\Omega\times(0,T))$ 
    such that, for every
    $(\omega,t)\in\Omega\times[0,T]$,
    \[
    {}_{V_2^*}\!\ip{\mathcal A_\lambda (\omega,t,v)}{v}_{V_2}\geq 
    c_1\norm{v}^2_{V_2} - c_1'\norm{v}^2_{H} - f_1(\omega,t)
    \qquad\forall\,v\in V_2\,;
    \]
    \item weak boundedness: there exists $c_2>0$ and an 
    adapted process $f_2 \in L^1(\Omega\times(0,T))$ such that,
    for every $(\omega,t)\in\Omega\times[0,T]$,
    \[
    \norm{\mathcal A_\lambda (\omega,t,v)}^2_{V_2^*}\leq 
    c_2\norm{v}^2_{V_2} + f_2(\omega,t) \qquad\forall\, v\in V_2\,.
    \]
  \end{itemize}
\end{lem}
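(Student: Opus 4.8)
The plan is to read off the four analytical properties, together with progressive measurability, directly from the definition of $\mathcal A_\lambda$, exploiting that $\beta_\lambda$ is monotone and globally Lipschitz with constant $1/\lambda$ and satisfies $\beta_\lambda(0)=0$, that $\pi$ is Lipschitz with $\pi(0)=0$ by (H2), that $g\in L^2(\Omega\times(0,T);H)$ by (H3), and that $\norm{\cdot}_2$ is an equivalent norm on $V_2$. The observation used everywhere is that $\varphi\in V_2$ implies $\Delta\varphi\in H$ with $\norm{\Delta\varphi}_H\le\norm{\varphi}_2$, so that each of the four integrals defining ${}_{V_2^*}\!\ip{\mathcal A_\lambda(\omega,t,v)}{\varphi}_{V_2}$ is a bounded linear functional of $\varphi$; in particular $\mathcal A_\lambda(\omega,t,v)\in V_2^*$. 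Since only the $g$-term depends on $(\omega,t)$ — linearly, and through the progressively measurable process $g$ — the map $(\omega,t)\mapsto\mathcal A_\lambda(\omega,t,v)$ is progressively measurable for each fixed $v\in V_2$; combined with the Lipschitz continuity of $v\mapsto\mathcal A_\lambda(\omega,t,v)$ from $V_2$ to $V_2^*$ (an immediate consequence of the bounds in the next paragraph), a standard Carath\'eodory argument yields joint progressive measurability.

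Hemicontinuity is immediate: the $\Delta v$-term is affine in $r$, the $g$-term is constant in $r$, and for the two remaining terms $v_1+rv_2\to v_1+r_0v_2$ in $H$ as $r\to r_0$, so $\beta_\lambda(v_1+rv_2)\to\beta_\lambda(v_1+r_0v_2)$ and $\pi(v_1+rv_2)\to\pi(v_1+r_0v_2)$ in $H$ by Lipschitz continuity, whence the corresponding integrals against $\Delta\varphi\in H$ converge. For weak boundedness I estimate each of the four terms of ${}_{V_2^*}\!\ip{\mathcal A_\lambda(\omega,t,v)}{\varphi}_{V_2}$ by Cauchy--Schwarz, using $\norm{\beta_\lambda(v)}_H\le\lambda^{-1}\norm{v}_H$ and $\norm{\pi(v)}_H\le C_\pi\norm{v}_H$, to get $\norm{\mathcal A_\lambda(\omega,t,v)}_{V_2^*}\lesssim_\lambda\norm{v}_2+\norm{g(\omega,t)}_H$; squaring gives the claimed inequality with $f_2:=C\norm{g(\cdot)}_H^2$, which belongs to $L^1(\Omega\times(0,T))$ by (H3). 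The very same computation applied to a difference $\mathcal A_\lambda(v)-\mathcal A_\lambda(v')$ yields the Lipschitz continuity in $v$ invoked above.

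The two substantive estimates are weak coercivity and weak monotonicity, and both rely on integrating by parts the $\beta_\lambda$-term: this is legitimate because functions in $V_2$ have vanishing normal derivative on $\partial D$ and $\beta_\lambda(v)\in H^1(D)$ with $\nabla\beta_\lambda(v)=\beta_\lambda'(v)\nabla v$ a.e. (Stampacchia's chain rule for the Lipschitz function $\beta_\lambda$ and $v\in H^1$), so that no boundary term appears. For coercivity, testing with $v$, Green's formula together with $\beta_\lambda'\ge0$ gives $-\int_D\beta_\lambda(v)\Delta v=\int_D\beta_\lambda'(v)\abs{\nabla v}^2\ge0$, while Young's inequality absorbs the $\pi$- and $g$-terms into $\tfrac12\norm{\Delta v}_H^2$; one is left with ${}_{V_2^*}\!\ip{\mathcal A_\lambda(\omega,t,v)}{v}_{V_2}\ge\tfrac12\norm{\Delta v}_H^2-C_\pi^2\norm{v}_H^2-\norm{g(\omega,t)}_H^2$, and substituting $\norm{\Delta v}_H^2=\norm{v}_2^2-\norm{v}_H^2$ produces weak coercivity with $c_1=\tfrac12$, $c_1'=\tfrac12+C_\pi^2$ and $f_1=\norm{g(\cdot)}_H^2\in L^1(\Omega\times(0,T))$. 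For weak monotonicity the sign of the $\beta_\lambda$-term is not even needed: writing ${}_{V_2^*}\!\ip{\mathcal A_\lambda(\omega,t,v_1)-\mathcal A_\lambda(\omega,t,v_2)}{v_1-v_2}_{V_2}=\norm{\Delta(v_1-v_2)}_H^2-\int_D(\beta_\lambda(v_1)-\beta_\lambda(v_2))\Delta(v_1-v_2)-\int_D(\pi(v_1)-\pi(v_2))\Delta(v_1-v_2)$ and estimating the last two integrals by Cauchy--Schwarz and Young with the Lipschitz constants $\lambda^{-1}$ and $C_\pi$, the term $\norm{\Delta(v_1-v_2)}_H^2$ dominates up to a multiple of $\norm{v_1-v_2}_H^2$, which gives the stated bound with, e.g., $c=\lambda^{-2}+C_\pi^2$. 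The only point requiring a little care is this appeal to the chain rule and to boundary-term-free integration by parts for $\beta_\lambda(v)$, which is classical; everything else is routine, and all constants are permitted to depend on the fixed parameter $\lambda$.
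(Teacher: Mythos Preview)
Your proof is correct and follows essentially the approach the paper has in mind: the paper's own proof is a one-line reference to \cite[Lemma~3.1]{scar-SCH}, noting only that the argument rests on the embedding $V_1\hookrightarrow H$ and the Lipschitz continuity of $\beta_\lambda$ and $\pi$, which is precisely what you exploit. Your treatment of coercivity via integration by parts and $\beta_\lambda'\geq0$, and of monotonicity via the Lipschitz bound on $\beta_\lambda$ (yielding a $\lambda$-dependent constant $c$, which is permitted here), is the standard and expected route.
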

\begin{proof}
  We refer to \cite[Lemma~3.1]{scar-SCH}: the proof is based on the fact that $V_1\embed H$
  and the Lipschitz continuity of $\beta_\lambda$ and $\pi$.
\end{proof}

We can prove now existence and uniqueness of an
approximate solution $(u_\lambda,w_\lambda)$.
\begin{prop}\label{prop:app}
  In the current setting, there exists a unique pair $(u_\lambda,w_\lambda)$ with 
  \begin{gather*}
  u_\lambda \in L^2(\Omega; C^0([0,T]; V_1)) \cap L^2(\Omega; L^2(0,T; V_2))\,,\\
  w_\lambda:=-\Delta u_\lambda + \beta_\lambda(u_\lambda) + \pi(u_\lambda) - g \in L^2(\Omega; L^2(0,T; H))\,,
  \end{gather*}
such that, for every $t\in[0,T]$, $\P$-almost surely,
\[
  (u_\lambda(t), \varphi)_{1,\eps} 
  - \int_0^t\!\!\int_Dw_\lambda(s)\Delta\varphi\,ds
  =(u_0, \varphi)_{1,\eps} + \ip{\int_0^tB(s)\,dW(s)}{\varphi}_{V_1} \qquad\forall\,\varphi\in V_2\,.
\]
\end{prop}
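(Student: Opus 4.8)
The plan is to recast the approximated problem as a classical variational stochastic evolution equation in a suitable Hilbert triplet and then invoke the standard well-posedness theory for monotone, coercive operators. Since $\eps>0$ is fixed, the bilinear form $(\cdot,\cdot)_{1,\eps}$ is a scalar product on $V_1$ whose induced norm is equivalent to $\norm\cdot_1$; let $H_\eps$ denote $V_1$ endowed with $(\cdot,\cdot)_{1,\eps}$. For $v\in V_1$ and $\varphi\in V_2$ one has ${}_{V_2^*}\!\ip{R_\eps v}{\varphi}_{V_2}={}_{V_1^*}\!\ip{R_\eps v}{\varphi}_{V_1}=(v,\varphi)_{1,\eps}$, so that identifying $H_\eps$ with its own dual through $(\cdot,\cdot)_{1,\eps}$ is compatible with the dense continuous embeddings $V_2\embed H_\eps\embed V_2^*$, the latter being realized by $R_\eps$. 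Hence $(V_2,H_\eps,V_2^*)$ is a Hilbert triplet, and setting $B_\eps:=R_\eps^{-1}B$ (which commutes with the stochastic integral), the variational formulation written just before Lemma~\ref{lm:A_lam} is exactly the abstract equation
\[
  du_\lambda(t)+\mathcal A_\lambda(t,u_\lambda(t))\,dt=B_\eps(t)\,dW(t)\,,\qquad u_\lambda(0)=u_0\,,
\]
understood as an identity in $V_2^*$.

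Next I would check that the data of this equation satisfy the assumptions of the classical variational existence-and-uniqueness theorem for stochastic evolution equations in the $p=2$ setting. Lemma~\ref{lm:A_lam} supplies precisely progressive measurability, hemicontinuity, weak monotonicity, weak coercivity with exponent $2$ in $\norm\cdot_{V_2}$, and the quadratic growth bound for $\mathcal A_\lambda$; moreover the lower-order terms in the monotonicity and coercivity inequalities are controlled by $\norm\cdot_H^2\le C_\eps\norm\cdot_{H_\eps}^2$, hence they are of the form required relative to the pivot space $H_\eps$. The initial datum satisfies $u_0\in L^2(\Omega,\cF_0,\P;V_1)=L^2(\Omega,\cF_0,\P;H_\eps)$ by (H4) (recall $\eps$ is fixed and positive), and $B_\eps$ is progressively measurable with $B_\eps\in L^2(\Omega\times(0,T);\cL^2(U,V_2))\subset L^2(\Omega\times(0,T);\cL^2(U,H_\eps))$, thanks to the temporary assumption \eqref{hyp_B} and the continuity of $R_\eps^{-1}$ from $H$ into $V_2$. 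The cited theorem then produces a unique
\[
  u_\lambda\in L^2(\Omega;C^0([0,T];H_\eps))\cap L^2(\Omega;L^2(0,T;V_2))
\]
solving the displayed equation for every $t\in[0,T]$, $\P$-almost surely; since $H_\eps$ agrees with $V_1$ up to an equivalent norm, this is the regularity claimed in the statement, and uniqueness follows from the weak monotonicity via the Gronwall argument contained in the theorem.

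It then remains to unwind the notation. Defining $w_\lambda:=-\Delta u_\lambda+\beta_\lambda(u_\lambda)+\pi(u_\lambda)-g$, the bound $u_\lambda\in L^2(\Omega;L^2(0,T;V_2))$ together with the global Lipschitz continuity of $\beta_\lambda$ and $\pi$ (with $\beta_\lambda(0)=\pi(0)=0$) and hypothesis (H3) yield $w_\lambda\in L^2(\Omega;L^2(0,T;H))$. Recalling the definition of $\mathcal A_\lambda$, two integrations by parts using $\partial_{\bf n}\varphi=0$ for $\varphi\in V_2$ give ${}_{V_2^*}\!\ip{\mathcal A_\lambda(s,u_\lambda(s))}{\varphi}_{V_2}=-\int_Dw_\lambda(s)\Delta\varphi$, while $(B_\eps\cdot W(t),\varphi)_{1,\eps}={}_{V_1^*}\!\ip{R_\eps R_\eps^{-1}\int_0^tB(s)\,dW(s)}{\varphi}_{V_1}={}_{V_1^*}\!\ip{\int_0^tB(s)\,dW(s)}{\varphi}_{V_1}$; substituting these into the abstract identity recovers verbatim the variational formulation in the statement. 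I expect the only genuinely delicate point to be the bookkeeping around the $\eps$-dependent pivot space $H_\eps$: one must make sure that the identification of $H_\eps$ with its dual, the realization $H_\eps\embed V_2^*$ through $R_\eps$, and the rewriting of the noise as $B_\eps\,dW$ are all mutually consistent. Once this framework is fixed, the result follows by a direct application of the classical variational theory, with no additional a priori estimate required.
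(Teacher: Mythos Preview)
Your proposal is correct and follows essentially the same approach as the paper: you set up the Hilbert triplet $(V_2,H_\eps,V_2^*)$ with $H_\eps=V_1$ identified with its dual through $R_\eps=I-\eps\Delta$, verify via Lemma~\ref{lm:A_lam} that $\mathcal A_\lambda$ satisfies the standard hemicontinuity, weak monotonicity, coercivity and boundedness hypotheses on this triplet, and invoke the classical variational theory of \cite{KR-spde, Pard, prevot-rock}. Your write-up is in fact more explicit than the paper's own proof in checking that $u_0\in L^2(\Omega;H_\eps)$ and $B_\eps\in L^2(\Omega\times(0,T);\cL^2(U,H_\eps))$, and in unwinding the abstract identity back into the stated variational formulation.
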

\begin{proof}
Since $(\cdot,\cdot)_{1,\eps}$ defines an equivalent scalar product on $V_1$, we can identify
the Hilbert space $V_1$ with its dual $V_1^*$ through the isomorphism $I-\eps\Delta$.
Secondly, since $V_2\embed V_1$ continuously and densely, 
then $V_1^*$ is canonically embedded in $V_2^*$ through the dualization given by $I-\eps\Delta$, namely
\[
  V_2 \embed V_1 \xrightarrow[I-\eps\Delta]{\sim}V_1^* \embed V_2^*\,,
\]
where all inclusions are continuous and dense. This means that every $v\in V_1$ belongs
also to $V_2^*$ and the duality is given by
\[
  {}_{V_2^*}\!\ip{v}{\varphi}_{V_2}=(v,\varphi)_{1,\eps} \qquad\forall\,\varphi\in V_2\,.
\]
Working on the Hilbert triplet $(V_2, V_1, V_2^*)$ with this given dualization of $V_1$,
thanks to Lemma~\ref{lm:A_lam}, the facts that the norm $\norm{\cdot}_{1,\eps}$
is equivalent to $\norm{\cdot}_{V_1}$
and $V_1\embed H$ continuously, 
the operator $\mathcal A_\lambda$ continues to satisfy
the usual hypotheses of hemicontinuity, 
monotonicity, coercivity and boundedness
also on the Hilbert triple $(V_2,V_1,V_2)$ with 
the dualization given by $I-\eps\Delta$.
Hence, 
the thesis follow by the classical 
variational theory (see \cite{KR-spde, Pard, prevot-rock}).
\end{proof}

\begin{rmk}
  Since we have stated in the introduction that $H$
  is identified to its dual in the canonical way, 
  we want to spend a few words on the dualization 
  introduced in the proof of Proposition~\ref{prop:app}, 
  as this may cause some confusion.
  The dualization of $V_1$ given by $I-\eps\Delta$ is
  confined {\em only} to the proof of the Proposition~\ref{prop:app} 
  as a tool in order to obtain directly the required regularity on the approximated solutions
  avoiding further technicalities as finite-dimensional approximations.
  Throughout the rest of the paper, we shall use 
  the dualization on $H$ introduced in the introduction.\\
  Let us stress that the definition of the operator 
  $\mathcal{A_\lambda}:\Omega\times[0,T]\times V_2\to V_2^*$ given above
  is independent on the specific dualization chosen on $H$
  rather that on $V_1$.
  What actually depends on the particular ``pivot'' space in the following fact:
  if we identify $H$ to its dual in the usual way, then
  $\mathcal A_\lambda$ is the weak
  realization of the (random and time-dependent) 
  unbounded operator $A_{\lambda}^H$ on $H$
  given by 
 \begin{align*}
  A_{\lambda}^H(\omega,t, v)&:= -\Delta(-\Delta v + \beta_\lambda(v)+\pi(v)-g(\omega,t))\,,
  \qquad(\omega,t)\in\Omega\times[0,T]\,,\\
  &\quad v\in D(A^H_{\lambda}(\omega,t,\cdot)):=
  \{v\in V_2: -\Delta v + \beta_\lambda(v)+\pi(v)-g(\omega,t)\in V_2\}\,,
  \end{align*}
  whereas if we identify $V_1$ with its dual through $I-\eps\Delta$, then
  $\mathcal A_\lambda$ is the weak
  formulation of the unbounded operator $A_{\lambda\eps}^{V_1}$ on $V_1$
  defined as
  \begin{align*}
  A_{\lambda\eps}^{V_1}(\omega,t, v)&:= 
  (I-\eps\Delta)^{-1}(-\Delta(-\Delta v + \beta_\lambda(v)+\pi(v)-g(\omega,t)))\,,
  \qquad(\omega,t)\in\Omega\times[0,T]\,,\\
  &\quad v\in D(A^{V_1}_{\lambda\eps}(\omega,t,\cdot))
  :=\{v\in V_2: -\Delta v + \beta_\lambda(v)+\pi(v)-g(\omega,t)\in V_1\}\,.
  \end{align*}
  Indeed, in the former case this follows immediately by integration by parts.
  In the latter case, for every 
  $(\omega,t)\in \Omega\times[0,T]$, $v\in D(A^{V_1}_{\lambda\eps}(\omega,t,\cdot))$ 
  and $\varphi\in V_2$, 
  we have $A^{V_1}_{\lambda\eps}(\omega,t,v)\in V_1$: hence, 
  recalling that $V_1\embed V_2^*$ through the dualization given by $I-\eps\Delta$,
  \begin{align*}
  {}_{V_2^*}\!\ip{A^{V_1}_{\lambda\eps}(\omega,t,v)}{\varphi}_{V_2}&=
  \left(A^{V_1}_{\lambda\eps}(\omega,t,v), \varphi\right)_{1,\eps}=
  {}_{V_1^*}\!\ip{(I-\eps\Delta)A^{V_1}_{\lambda\eps}(\omega,t,v)}{\varphi}_{V_1}\\
  &={}_{V_1^*}\!\ip{-\Delta(-\Delta v+\beta_\lambda(v)+\pi(v)-g(\omega,t))}{\varphi}_{V_1}\\
  &=-\int_D\left(-\Delta v+\beta_\lambda(v)+\pi(v)-g(\omega,t)\right)\Delta\varphi=
  {}_{V_2^*}\!\ip{\mathcal A_\lambda (\omega,t,v)}{\varphi}_{V_2}\,,
  \end{align*}
  so that $A^{V_1}_{\lambda\eps}$ extends continuously to the weak operator $\mathcal A_\lambda$.\\
  In a formal way, but perhaps more explicative, 
  when we choose the dualization on $V_1$
  we are applying the operator $(I-\eps\Delta)^{-1}$ to the approximated equation \eqref{eq1_app}, in order 
  to shift the evolution from $V_1^*$ to $V_1$: this explains why 
  the stochastic integrand on the right-hand side is $(I-\eps\Delta)^{-1}B$.
  In other words, 
  if we use the dualization on $H$ then the approximated equation formally reads
  \[
  d(u_\lambda-\eps\Delta u_\lambda) + A_\lambda^H(u_\lambda)\,dt = B\,dW\,,
  \]
  while if we use the dualization on $V_1$(with scalar product $(\cdot,\cdot)_{1,\eps}$) it formally reads
  \[
  du_\lambda + A_{\lambda\eps}^{V_1}(u_\lambda)\,dt = B_\eps\,dW\,.
  \]
  As we have already pointed out, the 
  dualization on $V_1$ through $I-\eps\Delta$ is confined 
  only the proof of Proposition~\ref{prop:app}, and we shall
  keep the dualization on $H$ from now on.
\end{rmk}

\subsection{Pathwise estimates}\label{path_est}
In this section we prove pathwise estimates on the approximated solutions,
independently of the parameter $\lambda$. The term ``pathwise''
refers here to the fact that $\omega$ is fixed in a suitable set
of probability $1$ in $\Omega$.
First of all, 
we can rewrite the approximated equation as
\beq\label{eq_diff}
  \partial_t R_\eps(u_\lambda - B_\eps\cdot W) 
  + \mathcal A_\lambda(\cdot, u_\lambda) = 0 \qquad
  \text{in } V_2^*\,, \qquad\text{a.e.~in } (0,T)\,, \quad\P\text{-a.s.}
\eeq
First of all, 
testing \eqref{eq_diff} by $\varphi=\frac1{|D|}$, 
since the operator $R_\eps^{-1}=(I-\eps\Delta)^{-1}$ preserves the mean we infer that 
\[
  u_\lambda(t)_D = m(t):= (u_0)_D + (B\cdot W(t))_D \qquad\forall\,t\in[0,T]\,,
\]
where $m\in L^2(\Omega; C^0([0,T]))$ thanks to the properties of the stochastic integral.
Similarly,
\[
  \left(u_\lambda-B_\eps\cdot W\right)_D=(u_\lambda- B\cdot W)_D=(u_0)_D\,,
\]
so that we can define $m_\eps:=(u_0)_D+B_\eps\cdot W
\in L^2(\Omega; C^0([0,T]; V_{1}))$.
Taking these remarks into account, 
thanks to the assumptions (H4) and \eqref{hyp_B} on $u_0$ and $B$, respectively,
there is $\Omega'\in\cF$ (independent of both $\lambda$ and $\eps$,
and depending only on the initial data) 
with $\P(\Omega')=1$ such that, for every $\omega\in\Omega'$,
\begin{gather*}
  u_0(\omega) \in V_1\,, \qquad \widehat\beta(\alpha u_0(\omega))<+\infty\quad\forall\,\alpha>0\,,\qquad
  B\cdot W(\omega) \in C^0([0,T]; V_4)\embed L^\infty(Q)\,,\\
  m(\omega) \in C^0([0,T])\,, \qquad m_\eps(\omega) \in C^0([0,T]; V_1)\,.
\end{gather*}
Let us fix now $\omega\in\Omega'$: in the sequel, we do not write the dependence on $\omega$ explicitely.

We shall need the following lemma.
\begin{lem}
  \label{lm:phi}
  The operator
  \[
  \phi_\eps:V^*_1\to V_1\,, \qquad \phi_\eps(v):=\mathcal NR_\eps^{-1}(v-v_D)\,, \quad v\in V_1^*\,,
  \]
  is linear, symmetric, monotone and continuous.
  Moreover, 
  we have $\phi_\eps=D\Phi_\eps$ in the sense of Fr\'echet, where
  $\Phi_\eps:V^*_1\to[0,+\infty)$ is defined as
  \[
  \Phi_\eps(v)
  :=\frac12\int_D|\nabla\mathcal NR_\eps^{-1}(v-v_D)|^2 + 
  \frac\eps2\int_D|R_\eps^{-1}(v-v_D)|^2\,, \quad v\in V^*_1\,.
  \]
\end{lem}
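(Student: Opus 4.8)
The plan is to attach to $\phi_\eps$ its associated bilinear form on $V_1^*$, to compute that form explicitly, and then to read off all four properties at once, including the identification $\phi_\eps=D\Phi_\eps$. Throughout, $\eps>0$ is fixed and, for $v\in V_1^*$, I abbreviate $\bar v:=v-v_D\in V^*_{1,0}$. First I would note that $\phi_\eps$ is well defined with values in $V_{1,0}\subseteq V_1$: the map $v\mapsto\bar v$ is linear and continuous from $V_1^*$ into $V^*_{1,0}$ (as $v\mapsto v_D$ is continuous), the operator $R_\eps^{-1}\colon V_1^*\to V_1$ is linear, continuous and mean-preserving (hence it sends $V^*_{1,0}$ into $V_{1,0}$), and $\mathcal N\colon V^*_{1,0}\to V_{1,0}$ is a linear isomorphism. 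Thus $\phi_\eps$ is a composition of continuous linear maps, which already yields linearity and continuity.

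Next, for $v_1,v_2\in V_1^*$ I would set $a_\eps(v_1,v_2):={}_{V_1^*}\!\ip{v_1}{\phi_\eps(v_2)}_{V_1}$. Since $\phi_\eps(v_2)\in V_{1,0}$ has zero mean, $a_\eps(v_1,v_2)={}_{V_1^*}\!\ip{\bar v_1}{\mathcal N R_\eps^{-1}\bar v_2}_{V_1}$, and by the very definition of $\mathcal N$ this equals $\int_D\nabla\mathcal N\bar v_1\cdot\nabla\mathcal N R_\eps^{-1}\bar v_2$. The crux is the identity
\[
  \mathcal N\bar v=\mathcal N R_\eps^{-1}\bar v+\eps R_\eps^{-1}\bar v\qquad\forall\,v\in V_1^*,
\]
which I would check by observing that both sides lie in $V_{1,0}$ and that, for every $\varphi\in V_1$, the right-hand side satisfies $\int_D\nabla(\cdot)\cdot\nabla\varphi=(R_\eps^{-1}\bar v,\varphi)_H+\eps\int_D\nabla R_\eps^{-1}\bar v\cdot\nabla\varphi={}_{V_1^*}\!\ip{R_\eps R_\eps^{-1}\bar v}{\varphi}_{V_1}={}_{V_1^*}\!\ip{\bar v}{\varphi}_{V_1}$, i.e.\ the defining relation of $\mathcal N\bar v$. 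Substituting this into the expression for $a_\eps$ and using $\int_D\nabla\mathcal N R_\eps^{-1}\bar v_2\cdot\nabla\varphi=(R_\eps^{-1}\bar v_2,\varphi)_H$ with $\varphi=R_\eps^{-1}\bar v_1$ gives
\[
  a_\eps(v_1,v_2)=\int_D\nabla\mathcal N R_\eps^{-1}\bar v_1\cdot\nabla\mathcal N R_\eps^{-1}\bar v_2+\eps\int_D R_\eps^{-1}\bar v_1\,R_\eps^{-1}\bar v_2 .
\]
This formula is symmetric in $(v_1,v_2)$ and nonnegative on the diagonal, so $\phi_\eps$ is symmetric and monotone; moreover it shows $a_\eps(v,v)=2\Phi_\eps(v)$.

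For the last claim I would expand, using bilinearity and symmetry, $\Phi_\eps(v+h)-\Phi_\eps(v)=\tfrac12\bigl(a_\eps(v+h,v+h)-a_\eps(v,v)\bigr)=a_\eps(v,h)+\tfrac12 a_\eps(h,h)={}_{V_1^*}\!\ip{h}{\phi_\eps(v)}_{V_1}+\tfrac12 a_\eps(h,h)$, and then estimate $\abs{a_\eps(h,h)}\lesssim\norm{h}_{V_1^*}^2$ from the continuity of $a_\eps$ (equivalently of $\phi_\eps$), so that the remainder is $o(\norm{h}_{V_1^*})$. Hence $\Phi_\eps$ is Fréchet differentiable on $V_1^*$ with $D\Phi_\eps(v)=\phi_\eps(v)$ under the canonical identification $(V_1^*)^*\cong V_1$. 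I expect the only delicate point to be the bookkeeping with the chain of embeddings $V_{1,0}\embed H_0\embed V^*_{1,0}$ and with which pairing is in use at each step — in particular, passing from the $V_1^*$–$V_1$ duality to the $H$ inner product whenever an argument actually belongs to $H$ (such as $R_\eps^{-1}\bar v_j$) — but once the explicit form above is in hand nothing substantial remains. (Alternatively, symmetry and monotonicity follow at once from the fact that $\mathcal N$ and $R_\eps^{-1}$ are commuting nonnegative self-adjoint operators on $V^*_{1,0}$, but the explicit quadratic form is needed anyway for the Fréchet identification.)
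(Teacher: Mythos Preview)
Your proof is correct and follows essentially the same route as the paper: both hinge on computing the duality pairing $\ip{v}{\phi_\eps(v)}_{V_1}$ via the splitting $R_\eps=I-\eps\Delta$ (your identity $\mathcal N\bar v=\mathcal NR_\eps^{-1}\bar v+\eps R_\eps^{-1}\bar v$ is just $\mathcal N$ applied to the paper's decomposition $\bar v=R_\eps^{-1}\bar v-\eps\Delta R_\eps^{-1}\bar v$), arriving at $2\Phi_\eps(v)$ and hence monotonicity. The only cosmetic difference is that you carry the full bilinear form $a_\eps(v_1,v_2)$ and expand $\Phi_\eps(v+h)-\Phi_\eps(v)$ explicitly, whereas the paper works on the diagonal and then invokes that a continuous linear operator which is a subdifferential is automatically the Fr\'echet derivative.
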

\begin{proof}
  The map $\phi_\eps$ is well defined by definition of $\mathcal N$, and is trivially continuous and linear:
  hence, we only have to check monotonicity. For every $v\in V_1^*$,
  by definition of $\mathcal N$ and $R_\eps^{-1}$ we have 
  \begin{align*}
  &\ip{v}{\phi_\eps(v)}_{V_1}=\ip{v-v_D}{\mathcal NR_\eps^{-1}(v-v_D)}_{V_1}\\
  &=\ip{R_\eps^{-1}(v-v_D)}{\mathcal NR_\eps^{-1}(v-v_D)}_{V_1}+
  \ip{(v-v_D)-R_\eps^{-1}(v-v_D)}{\mathcal NR_\eps^{-1}(v-v_D)}_{V_1}\\
  &=\ip{-\Delta\mathcal N R_\eps^{-1}(v-v_D)}{\mathcal NR_\eps^{-1}(v-v_D)}_{V_1}
  +\ip{-\eps\Delta R_\eps^{-1}(v-v_D)}{\mathcal NR_\eps^{-1}(v-v_D)}_{V_1}\\
  &=\int_D|\nabla\mathcal NR_\eps^{-1}(v-v_D)|^2 + \eps\int_D|R_\eps^{-1}(v-v_D)|^2= 2\Phi_\eps(v) \geq 0\,.
  \end{align*}
  Hence, $\phi_\eps$ is maximal monotone and a similar computation shows that $\phi_\eps=\partial\Phi_\eps$.
  Since $\phi_\eps$ is also linear and continuous, $\Phi_\eps$
  is Fr\'echet differentiable and $D\Phi_\eps=\phi_\eps$.
\end{proof}

We test \eqref{eq_diff} by $\phi_\eps(R_\eps(u_\lambda-B_\eps\cdot W))$, and we obtain
\begin{align*}
  &\Phi_\eps(R_\eps(u_\lambda-B_\eps\cdot W))(t)\\
  &- \int_{Q_t}\left(-\Delta u_\lambda + \beta_\lambda(u_\lambda) + \pi(u_\lambda) - g\right)
  \Delta\phi_\eps(R_\eps(u_\lambda-B_\eps\cdot W))
  =\Phi_\eps(R_\eps u_0)\,.
\end{align*}
Now, note that $(R_\eps(u_\lambda-B_\eps\cdot W))_D = (u_\lambda-B_\eps\cdot W)_D$ and 
\begin{align*}
  \phi_\eps(R_\eps(u_\lambda-B_\eps\cdot W))&=
  \mathcal NR_\eps^{-1}\left[R_\eps(u_\lambda-B_\eps\cdot W) - (u_\lambda-B_\eps\cdot W)_D\right]\\
  &=\mathcal N\left(u_\lambda-B_\eps\cdot W - (u_\lambda-B_\eps\cdot W)_D\right)\\
  &=\mathcal N(u_\lambda-B_\eps\cdot W - (u_0)_D)= 
  \mathcal N(u_\lambda-m_\eps)\,,
\end{align*}
hence, rearranging the terms we have 
\begin{align*}
  &\norm{\nabla \mathcal N(u_\lambda-m_\eps)(t)}_H^2 + \eps\norm{(u_\lambda-m_\eps)(t)}_H^2\\
  &\qquad+2\int_{Q_t}\left(-\Delta u_\lambda + \beta_\lambda(u_\lambda) + \pi(u_\lambda) - g\right)(u_\lambda - m_\eps)\\
  &=\norm{\nabla \mathcal N(u_0-(u_0)_D)}_H^2 + \eps\norm{u_0-(u_0)_D}_H^2\,.
\end{align*}
Integrating by parts we infer that
\begin{align*}
  &\frac12\norm{(u_\lambda-m_\eps)(t)}_*^2 + \frac\eps2\norm{(u_\lambda-m_\eps)(t)}_H^2
  +\int_{Q_t}|\nabla u_\lambda|^2 + \int_{Q_t}\beta_\lambda(u_\lambda)u_\lambda\\
  &=\frac12\norm{u_0-(u_0)_D}_*^2 + \frac\eps2\norm{u_0-(u_0)_D}_H^2
  +\int_{Q_t}\nabla u_\lambda\cdot\nabla m_\eps + \int_{Q_t}\beta_\lambda(u_\lambda)m_\eps\\
  &+\int_{Q_t}g(u_\lambda-m_\eps)-\int_{Q_t}\pi(u_\lambda)(u_\lambda-m_\eps)\,.
\end{align*}
Using the definition of Yosida approximation and the generalized Young inequality
on the last term on the left-hand side we get
\begin{align*}
  \int_{Q_t}\beta_\lambda(u_\lambda)u_\lambda
  &=\int_{Q_t}\beta_\lambda(u_\lambda)(I+\lambda\beta)^{-1}u_\lambda 
  + \lambda\int_{Q_t}|\beta_\lambda(u_\lambda)|^2\\
  &\geq \int_{Q_t} \widehat\beta((I+\lambda\beta)^{-1}u_\lambda) + \int_{Q_t}\widehat{\beta^{-1}}(\beta_\lambda(u_\lambda))\,,
\end{align*}
while the Young inequality
on the right-hand side yields
\begin{gather*}
  \int_{Q_t}\nabla u_\lambda\cdot\nabla m_\eps \leq \frac14\int_{Q_t}|\nabla u_\lambda|^2 + \int_Q|\nabla m_\eps|^2\,,\\
  \int_{Q_t}\beta_\lambda(u_\lambda)m_\eps
  =\int_{Q_t}\frac12\beta_\lambda(u_\lambda)(2m_\eps)
  \leq \frac12\int_{Q_t}\widehat{\beta^{-1}}(\beta_\lambda(u_\lambda))
  +\frac12\int_Q\widehat\beta(2m_\eps)\,.
\end{gather*}
Moreover, by the Lipschitz continuity of $\pi$ and
thanks to the properties of $\norm{\cdot}_*$ we have,
for every $\sigma>0$,
\begin{align*}
  &\int_{Q_t}(g-\pi(u_\lambda))(u_\lambda-m_\eps)\leq\frac12\int_{Q_t}|u_\lambda-m_\eps|^2 
  + \int_Q|g|^2 + C_\pi^2\int_{Q_t}|u_\lambda|^2\\
  &\leq\left(\frac12+2C_\pi^2\right)\int_{Q_t}|u_\lambda-m_\eps|^2 + \int_Q|g|^2 + 2C_\pi^2\int_Q|m_\eps|^2\\
  &\leq\sigma\int_{Q_t}|\nabla (u_\lambda-m_\eps)|^2 
  + C_\sigma\int_0^t\norm{(u_\lambda-m_\eps)(s)}_*^2\,ds +  \int_Q|g|^2
  + 2C_\pi^2\int_Q|m_\eps|^2\\
  &\leq2\sigma\int_{Q_t}|\nabla u_\lambda|^2+2\sigma\int_{Q}|\nabla m_\eps|^2
  + C_\sigma\int_0^t\norm{(u_\lambda-m_\eps)(s)}_*^2\,ds +  \int_Q|g|^2 + 2C_\pi^2\int_Q|m_\eps|^2\,.
\end{align*}
Choosing $\sigma>0$ sufficiently small, using the Gronwall lemma and 
rearranging all the terms, we deduce that for every $t\in[0,T]$
\begin{align*}
  &\norm{u_\lambda(t)}_*^2 + \eps\norm{u_\lambda(t)}_H^2
  +\int_{Q_t}|\nabla u_\lambda|^2 
  +\int_{Q_t} \left(\widehat\beta((I+\lambda\beta)^{-1}u_\lambda) 
  +\widehat{\beta^{-1}}(\beta_\lambda(u_\lambda))\right)\\
  &\lesssim\norm{u_0}_*^2 + \eps\norm{u_0}_H^2 + \norm{g}^2_{L^2(Q)}
  +\norm{m_\eps}^2_{C^0([0,T]; V_1^*)} + \eps\norm{m_\eps}^2_{C^0([0,T]; H)}\\
  &\quad+ \norm{m_\eps}^2_{L^2(0,T; V_1)} + \int_Q\widehat\beta(2m_\eps)\,,
\end{align*}
where the implicit constant is independent of both $\lambda$ and $\eps$. Now note that the right-hand
side is finite for every $\omega\in\Omega'$: indeed, since
$m_\eps=(u_0)_D + B_\eps\cdot W$, by the contraction properties of $(I-\eps\Delta)^{-1}$ 
on $V_1^*$, $H$ and $V_1$ we have
\begin{align*}
  \norm{m_\eps}_{C^0([0,T];V_1^*)}&\leq\norm{u_0}_{V_1^*}+\norm{B\cdot W}_{C^0([0,T]; V_1^*)}\,,\\
  \norm{m_\eps}_{C^0([0,T];H)}&\leq\norm{u_0}_{H}+\norm{B\cdot W}_{C^0([0,T]; H)}\,,\\
  \norm{\nabla m_\eps}_{L^2(0,T; H)}&\leq \norm{\nabla B\cdot W}_{L^2(0,T; H)}\,,
\end{align*}
while the contraction property of $(I-\eps\Delta)^{-1}$ on $L^\infty(D)$ yield
\[
   \int_Q\widehat\beta(2m_\eps)=\int_Q\widehat\beta\left(\frac124(u_0)_D + \frac12 4B_\eps\cdot W\right)\leq
   \frac12\int_Q\widehat\beta(4(u_0)_D) + \frac12\int_Q\widehat\beta(4\norm{B\cdot W}_{L^\infty(Q)})\,.
\]
The right-hand sides are finite in $\Omega'$ thanks to the assumptions on $u_0$ and $B$.
We deduce that for every $\omega\in\Omega'$, there is $M_\omega>0$,
independent of $\lambda$ and $\eps$, such that 
\begin{gather}
  \label{est1}
  \norm{u_\lambda(\omega)}^2_{C^0([0,T]: V_1^*)} + \eps\norm{u_\lambda(\omega)}^2_{C^0([0,T]; H)} 
  + \norm{\nabla u_\lambda(\omega)}^2_{L^2(0,T; H)} \leq M_\omega\,,\\
  \label{est2}
  \norm{\widehat\beta((I+\lambda\beta)^{-1}u_\lambda(\omega))}_{L^1(Q)}+
  \norm{\widehat{\beta^{-1}}(\beta_\lambda(u_\lambda(\omega)))}_{L^1(Q)}\leq M_\omega\,.
\end{gather}

Let us perform the second estimate now.
We test \eqref{eq_diff} by $u_\lambda-B_\eps\cdot W$:
\[
  \frac12\norm{u_\lambda(t)-B_\eps\cdot W(t)}^2_{1,\eps}
  -\int_{Q_t}(-\Delta u_\lambda+\beta_\lambda(u_\lambda)+\pi(u_\lambda)-g)
  \Delta(u_\lambda-B_\eps\cdot W)=
  \frac12\norm{u_0}_{1,\eps}^2\,.
\]
Now, rearranging the terms we have
\begin{align*}
  \norm{u_\lambda(t)}^2_{H}&+\eps\norm{\nabla u_\lambda(t)}_H^2+\int_{Q_t}|\Delta u_\lambda|^2
  +\int_{Q_t}\beta_\lambda'(u_\lambda)|\nabla u_\lambda|^2\\
  &\lesssim\norm{u_0}_H^2+\eps\norm{\nabla u_0}_H^2
  +\norm{B_\eps\cdot W(t)}_{H}^2+\eps\norm{\nabla B_\eps\cdot W(t)}_H^2\\
  &+\int_{Q_t}\Delta u_\lambda \Delta B_\eps\cdot W
  -\int_{Q_t}\beta_\lambda(u_\lambda)\Delta B_\eps\cdot W
  +\int_{Q_t}(\pi(u_\lambda)-g)\Delta (u_\lambda- B_\eps\cdot W)\,,
\end{align*}
so that the Young inequality, the regularities of $g$ and $B$
and the fact that $(I-\eps\Delta)^{-1}$ is non-expansive on $H$, $V_1$ and $V_2$ yield
\begin{align*}
  \norm{u_\lambda(t)}^2_{H}&+\eps\norm{\nabla u_\lambda(t)}_H^2+\int_{Q_t}|\Delta u_\lambda|^2\\
  &\lesssim \norm{u_0}_H^2+\eps\norm{\nabla u_0}_H^2
  +\norm{B\cdot W}^2_{C^0([0,T]; H)}+\eps\norm{\nabla B\cdot W}^2_{C^0([0,T]; H)}\\
  &+\norm{B\cdot W}^2_{L^2(0,T; V_2)}
  +\norm{g}^2_{L^2(0,T; H)}
  +C_\pi\int_{Q_t}|u_\lambda|^2
  -\int_{Q_t}\beta_\lambda(u_\lambda)\Delta B_\eps\cdot W\,,
\end{align*}
where the implicit constants are independent of both $\lambda$ and $\eps$. Noting that,
by the symmetry-like assumption in (H1),
\[
-\int_{Q_t}\beta_\lambda(u_\lambda)\Delta B_\eps\cdot W\lesssim 1+
\int_Q\widehat{\beta^{-1}}(\beta_\lambda(u_\lambda)) + \int_Q\widehat\beta(\norm{\Delta B\cdot W}_{L^\infty(Q)})\,,
\]
all the terms on the right-hand side are finite in $\Omega'$. Hence, 
for every $\omega\in\Omega'$ we also have
\beq
  \label{est3}
  \norm{u_\lambda(\omega)}^2_{C^0([0,T]; H)} + \eps\norm{\nabla u_\lambda(\omega)}^2_{C^0([0,T]; H)}
  +\norm{u_\lambda(\omega)}^2_{L^2(0,T; V_2)}\leq M_\omega\,.
\eeq

Furthermore, for every $\varphi\in V_4$
and a.e.~$t\in(0,T)$, since $V_4\embed W^{2,\infty}(D)$ we have
\begin{align*}
\ip{\mathcal A_\lambda(t,u_\lambda(t))}{\varphi}_{V_2}&=
-\int_D(-\Delta u_\lambda(t)+\beta_\lambda(u_\lambda(t))+\pi(u_\lambda(t))-g(t))\Delta\varphi\\
&\lesssim\left(\norm{\Delta u_\lambda(t)}_H+\norm{\beta_\lambda(u_\lambda(t))}_{L^1(D)}+C_\pi\norm{u_\lambda(t)}_H
+\norm{g(t)}_H\right)\norm{\varphi}_{V_4}\,.
\end{align*}
Now, by \eqref{est2} and the fact that $\widehat{\beta^{-1}}$ is superlinear,
we know that $(\beta_\lambda(u_\lambda))_\lambda$ is bounded in $L^1(Q)$:
hence, by \eqref{est1}--\eqref{est3}
and by comparison in \eqref{eq_diff} we infer that
\beq
  \label{est4}
  \norm{\partial_tR_\eps(u_\lambda-B_\eps\cdot W)(\omega)}_{L^1(0,T; V_4^*)} \leq M_\omega\,.
\eeq

\subsection{Estimates in expectation}
\label{exp_est}
We prove here estimates in expectations on the approximated solutions:
the idea is to re-perform the same estimates of Section~\ref{path_est}
using It\^o's formula instead of a path-by-path argument: recall that we have
\[
  d(R_\eps u_\lambda) + \mathcal A_\lambda(u_\lambda)\,dt = B\,dW\,.
\]

Bearing in mind Lemma~\ref{lm:phi}, due to the linearity and continuity of $\phi_\eps$,
we have that $\Phi_\eps\in C^2(V_1^*)$. It is clear that $\Phi_\eps$ and $D\Phi_\eps=\phi_\eps$
are bounded on bounded subsets of $V_1^*$, and
the second Fr\'echet derivative of $\Phi_\eps$, i.e.
\[
  D^2\Phi_\eps=D\phi_\eps:V_1^*\to\cL(V_1^*, V_1)\,, \qquad v\mapsto\phi_\eps\,, \quad v\in V^*_1\,,
\]
is constant in $V_1^*$. Moreover, $\phi_\eps$ is also
linear and continuous from $V_1^*$ to $V_2$.
Hence, we can apply It\^o's formula to $\Phi_\eps(u_\lambda)$
in the variational framework (cf.~\cite[Thm.~4.2, p.~65]{Pard}), which yields,
for every $t\in[0,T]$, $\P$-almost surely,
\begin{align*}
  \Phi_\eps(R_\eps u_\lambda(t)) &+ 
  \int_0^t\ip{\mathcal A_\lambda(s,u_\lambda(s))}{\phi_\eps(R_\eps u_\lambda(s))}_{V_2}\,ds\\
  &=\Phi_\eps(R_\eps u_0) + \frac12\int_0^t\operatorname{Tr}\left(B^*(s)\phi_\eps B(s)\right)\,ds + 
  \int_0^t\left(\phi_\eps(R_\eps u_\lambda(s)), B(s)\,dW(s)\right)_H\,.
\end{align*}
Rearranging the terms and using the same computations based on
the Young inequality and the Lipschitz continuity of $\pi$ as in Section~\ref{path_est} we deduce that 
\begin{align*}
  &\norm{(u_\lambda-m)(t)}_*^2 + \eps\norm{(u_\lambda-m)(t)}_H^2\\
  &\qquad+\int_{Q_t}|\nabla u_\lambda|^2
  +\int_{Q_t}\widehat\beta((I+\lambda\beta)^{-1}u_\lambda) 
  + \int_{Q_t}\widehat{\beta^{-1}}(\beta_\lambda(u_\lambda))\\
  &\lesssim\norm{u_0}_*^2 + \eps\norm{u_0}_H^2 + \int_Q|\nabla m|^2 +\int_Q\widehat\beta(2m)
  +\int_{Q}|g|^2 +\int_Q|m|^2 +\int_{Q_t}|u_\lambda-m|^2\\
  &\qquad+\int_0^t\operatorname{Tr}\left(B(s)\phi_\eps B(s)\right)\,ds + 
  \int_0^t\left(\phi_\eps(R_\eps u_\lambda(s)), B(s)\,dW(s)\right)_{H}\,.
\end{align*}
Now, by the properties of $\norm{\cdot}_*$ we have
\[
  \int_{Q_t}|u_\lambda-m|^2 \leq 
  \sigma\int_{Q_t}|\nabla(u_\lambda-m)|^2+C_\sigma\int_0^t\norm{(u_\lambda-m)(s)}_*^2\,ds
\]
for every $\sigma>0$. Moreover, by the properties of $\phi_\eps$
and since $R_\eps^{-1}$ is contraction on $V_1^*$, we have
\begin{align*}
  \operatorname{Tr}\left(B^*\phi_\eps B\right)&\leq
  \norm{\phi_\eps}_{\cL(V_1^*,V_1)}\norm{B}_{\cL^2(U,V_1^*)}^2\\
  &\leq
  \norm{\mathcal N (\cdot-\cdot_D)}_{\cL(V_1^*,V_1)}\norm{R_\eps^{-1}}_{\cL(V_1^*,V_1^*)}
  \norm{B}_{\cL^2(U, V_1^*)}^2
  \leq \norm{B}_{\cL^2(U, V_1^*)}^2\,.
\end{align*}
Taking supremum in time and expectations, we estimate the last term on the right-hand side using
the Burkholder-Davis-Gundy and Young inequalities as
\begin{align*}
  &\E\sup_{t\in[0,T]}\int_0^t\left(\phi_\eps(R_\eps u_\lambda(s)), B(s)\,dW(s)\right)_{H}
  \lesssim\E\left(\int_0^T\norm{\phi_\eps(R_\eps u_\lambda(s))}_{V_1}^2
  \norm{B(s)}^2_{\cL^2(U, V_1^*)}\,ds\right)^{1/2}\\
  &\qquad\leq\E\left(\sup_{t\in[0,T]}\norm{\phi_\eps(R_\eps u_\lambda(t))}_{V_1}^2
  \int_0^T\norm{B(s)}^2_{\cL^2(U, V_1^*)}\,ds\right)^{1/2}\\
  &\qquad\leq\sigma\E\sup_{t\in[0,T]}\norm{\phi_\eps(R_\eps u_\lambda(t))}_{V_1}^2
  +C_\sigma\E\int_0^T\norm{B(s)}^2_{\cL(U, V_1^*)}\,ds\,,
\end{align*}
where, by definition of $\mathcal N$ and $R_\eps^{-1}$, 
\[
\norm{\phi_\eps(R_\eps u_\lambda)}^2_{V_1}=\norm{\mathcal N(u_\lambda - m)}^2_{V_1}
\lesssim\norm{\nabla\mathcal N(u_\lambda-m)}_H^2=\norm{u_\lambda-m}_*^2\,,
\]
where all the implicit constants are independent of $\lambda$ and $\eps$.
Choosing $\sigma$ sufficiently small, rearranging the terms and using the Gronwall lemma yield
\begin{align*}
  &\norm{u_\lambda-m}_{L^2(\Omega; C^0([0,T]; V_1^*))}^2
   + \eps\norm{u_\lambda-m}^2_{L^2(\Omega; C^0([0,T]; H))}
  +\E\int_{Q}|\nabla u_\lambda|^2\\
  &\qquad+\E\int_{Q}\widehat\beta((I+\lambda\beta)^{-1}u_\lambda) + 
  \E\int_{Q}\widehat{\beta^{-1}}(\beta_\lambda(u_\lambda))
  \lesssim\E\norm{u_0}_*^2 + \eps\E\norm{u_0}_H^2\\
  &\qquad+ \norm{B}^2_{L^2(\Omega; L^2(0,T;\cL(U, V_1^*)))}
  +\norm{m}^2_{L^2(\Omega; L^2(0,T; V_1))} +\norm{\widehat\beta(2m)}_{L^1(\Omega\times Q)}
  + \norm{g}^2_{L^2(\Omega\times Q)}\,.
\end{align*}
Note that all the terms on the right-hand side are finite by the assumptions on $g$, $u_0$ and $B$.
Indeed, we have that 
\[
 m=(u_0)_D + (B_\eps\cdot W)_D = (u_0)_D + (B\cdot W)_D \in L^2(\Omega; C^0[0,T])\,,
\]
so that $\nabla m=0$, and, by convexity of $\widehat\beta$,
\[
  \widehat\beta(2m)=\widehat\beta\left(\frac124(u_0)_D + \frac124(B\cdot W)_D\right)\leq
  \frac12\widehat\beta(4(u_0)_D) + \frac12\widehat\beta(4(B\cdot W)_D) \in L^1(\Omega\times(0,T))\,.
\]
We deduce that there exists a constant $M>0$, independent of $\lambda$ and $\eps$, such that 
\begin{gather}
  \label{est5}
  \norm{u_\lambda}_{L^2(\Omega; C^0([0,T]; V_1^*))}^2
  + \eps\norm{u_\lambda}^2_{L^2(\Omega; C^0([0,T]; H))}
  +\norm{\nabla u_\lambda}^2_{L^2(\Omega; L^2(0,T; H))}\leq M\,,\\
  \label{est6}
  \norm{\widehat\beta((I+\lambda\beta)^{-1}u_\lambda)}_{L^1(\Omega\times Q)} + 
  \norm{\widehat{\beta^{-1}}(\beta_\lambda(u_\lambda))}_{L^1(\Omega\times Q)} \leq M\,.
\end{gather}

In order to deduce the further estimates on the solutions, we write
It\^o's formula for the square of the $\norm{\cdot}_{1,\eps}$-norm in $V_1$:
\[\begin{split}
  &\frac12\norm{u_\lambda(t)}_H^2 + \frac\eps2\norm{\nabla u_\lambda(t)}_H^2 + 
  \int_{Q_t}|\Delta u_\lambda|^2 + 
  \int_{Q_t}\beta_\lambda'(u_\lambda)|\nabla u_\lambda|^2
  =\frac12\norm{u_0}_H^2 +\frac\eps2\norm{\nabla u_0}_H^2\\
  &\qquad+ \int_{Q_t}(\pi(u_\lambda)-g)\Delta u_\lambda
  +\frac12\int_0^t\norm{B_\eps(s)}^2_{\cL_2(U,V_{1,\eps})}\,ds
   + \int_0^t\left(u_\lambda(s), B_\eps(s)\right)_{1,\eps}\,dW(s)\,.
\end{split}\]
By the Burkholder-Davis-Gundy and Young inequality we have 
\[
  \E\sup_{s\in[0,t]}\int_0^s\left(u_\lambda(s), B_\eps(s)\right)_{1,\eps}\,dW(s)\lesssim
  \sigma\E\sup_{s\in[0,t]}\norm{u_\lambda(s)}_{1,\eps}^2 + C_\sigma\E\int_0^t\norm{B_\eps(s)}^2_{\cL^2(U, V_{1,\eps})}\,ds
\]
for every $\sigma>0$: hence, 
recalling also that 
\[
  \norm{B_\eps}^2_{\cL^2(U, V_{1,\eps})} \leq \norm{B}^2_{\cL^2(U, H)}\,,
\]
taking supremum in time and expectations, choosing $\sigma>0$ sufficiently small, rearranging the terms thanks 
to the Young inequality and the Lipschitz-continuity of $\pi$ yield
\[\begin{split}
  &\E\sup_{s\in[0,t]}\left(\norm{u_\lambda(s)}_{H}^2+\eps\norm{\nabla u_\lambda(s)}_H^2\right)
  + \E\int_{Q_t}|\Delta u_\lambda|^2 \\
  &\qquad\lesssim\norm{u_0}^2_{1,\eps} + \E\int_Q|g|^2
  + C_\pi\E\int_{Q_t}|u_\lambda|^2
  +\norm{B}^2_{L^2(\Omega; L^2(0,T;\cL^2(U,H)))}\,.
\end{split}\]
The Gronwall lemma implies that there exists $M>0$, independent of $\lambda$ and $\eps$, such that 
\beq
\label{est7}
  \norm{u_\lambda}_{L^2(\Omega; C^0([0,T]; H))}^2 + 
  \eps\norm{\nabla u_\lambda}^2_{L^2(\Omega; C^0([0,T]; H))}+
  \norm{u_\lambda}^2_{L^2(\Omega; L^2(0,T; V_2))}
  \leq M\,.
\eeq

\subsection{The passage to the limit}
\label{pass_lim}
Let us fix $\omega\in\Omega'$.
First of all, by the estimate \eqref{est3} and the
fact that $B\cdot W(\omega) \in C^0([0,T]; V_4)$, we
have that $(u_\lambda-B_\eps\cdot W)_\lambda$ is uniformly bounded in $L^2(0,T; V_2)$,
so that $(R_\eps(u_\lambda-B_\eps\cdot W))_\lambda$ is uniformly bounded in $L^2(0,T; H)$.
Hence, recalling \eqref{est4},
since $V_2\embed V_1$ and $V_1^*\embed V_2^*$ compactly,
thanks to the classical compactness results
by Aubin-Lions and Simon (see \cite[Cor.~4, p.~85]{simon}) we infer that 
$(R_\eps(u_\lambda-B_\eps\cdot W))_\lambda$ is relatively strongly compact
in $V_1^*$: since $\eps$ is fixed, we deduce that 
\[
  (u_\lambda(\omega))_\lambda \qquad\text{is relatively compact in } L^2(0,T; V_1)\,.
\]
Secondly, since $\widehat{\beta^{-1}}$ is superlinear, 
the estimate \eqref{est3} yields that $(\beta_\lambda(u_\lambda(\omega)))_\lambda$
is uniformly integrable in $Q$, hence also weakly relatively compact in $L^1(Q)$
by the Dunford-Pettis theorem.

Taking these remarks into account, by \eqref{est1}--\eqref{est4}
we deduce that there are
\[
  u(\omega) \in L^\infty(0,T; H)\cap L^2(0,T; V_2)\,, \quad \eps u(\omega)\in L^\infty(0,T; V_1)\,,\qquad
  \xi(\omega) \in L^1(Q)
\]
and a subsequence $\lambda'=\lambda'(\omega)$ of $\lambda$ such that, as $\lambda'\searrow0$,
\begin{align}
   \label{conv1}
  u_{\lambda'}(\omega) \wstarto u(\omega) \qquad&\text{in } L^\infty(0,T; H)\,,\\
  \label{conv2}
  u_{\lambda'}(\omega) \wto u(\omega) \qquad&\text{in } L^2(0,T; V_2)\,,\\
  \label{conv3}
  \eps u_{\lambda'}(\omega) \wstarto \eps u(\omega) \qquad&\text{in } L^\infty(0,T; V_1)\,,\\
  \label{conv4}
  u_{\lambda'}(\omega)\to u(\omega) \qquad&\text{in } L^2(0,T; V_1)\,,\\
  \label{conv5}
  \beta_{\lambda'}(u_{\lambda'}(\omega)) \wto \xi(\omega) \qquad&\text{in } L^1(Q)\,.
\end{align}
From the strong convergence \eqref{conv4} and the Lipschitz-continuity of $\pi$
it easily follows that 
\[
  \pi(u_\lambda(\omega)) \to \pi(u(\omega)) \qquad\text{in } L^2(0,T; H)\,.
\]
Moreover, owing to the result \cite[Thm.~18, p.~126]{brezis-monot}
by Br\'ezis on the strong-weak closure of maximal monotone graphs, 
the strong convergence of $u_\lambda(\omega)$ and the weak convergence 
of $\beta_\lambda(u_\lambda(\omega))$ ensure also that
\[
  \xi(\omega) \in \beta(u(\omega)) \qquad\text{a.e.~in } Q\,.
\]
Furthermore, by definition of Yosida approximation and resolvent,
since $(\beta_\lambda(u_\lambda(\omega)))_\lambda$ is bounded in $L^1(Q)$
and $u_\lambda(\omega)\to u(\omega)$ in $L^1(Q)$,
we have
\begin{align*}
  &\norm{(I+\lambda\beta)^{-1}u_\lambda(\omega)-u(\omega)}_{L^1(Q)}\\
  &\qquad\leq
  \norm{(I+\lambda\beta)^{-1}u_\lambda(\omega)-u_\lambda(\omega)}_{L^1(Q)}
  +\norm{u_\lambda(\omega)-u(\omega)}_{L^1(Q)}\\
  &\qquad=\lambda\norm{\beta_\lambda(u_\lambda(\omega))}_{L^1(Q)}
  +\norm{u_\lambda(\omega)-u(\omega)}_{L^1(Q)} \to 0\,.
\end{align*}
Hence, the estimate \eqref{est2} together with the weak lower semicontinuity 
of the convex integrands yields
\[
  \int_Q\left(\widehat\beta(u(\omega))+ \widehat{\beta^{-1}}(\xi(\omega))\right) 
  \leq\liminf_{\lambda'\searrow0}\int_Q\left(\widehat\beta((I+\lambda'\beta)^{-1}u_{\lambda'}(\omega)) + 
  \widehat{\beta^{-1}}(\beta_{\lambda'}(u_{\lambda'}(\omega)))\right)\leq M_\omega
\]
so that $\widehat\beta(u(\omega))+ \widehat{\beta^{-1}}(\xi(\omega)) \in L^1(Q)$.

Now, setting $w:=-\Delta u + \xi + \pi(u) - g$, we have that 
$w(\omega)\in L^1(Q)$ and $w_\lambda(\omega)\wto w(\omega)$ in $L^1(Q)$.
Consequently, for every $\varphi\in V_4$, since $\Delta\varphi\in H^2(\Omega)\embed L^\infty(D)$, we have
\[
  \int_0^\cdot\!\!\int_Dw_\lambda(\omega, s)\Delta\varphi\,ds \to \int_0^\cdot\!\!\int_Dw(s)\Delta\varphi\,.
\]
By comparison in the approximated equation, we deduce that for every $t\in[0,T]$
\begin{align*}
  \ip{R_\eps u_\lambda(\omega,t )}{\varphi}_{V_1}&
  =(u_0, \varphi)_{1,\eps} + \ip{\int_0^tB(s)\,dW(s)}{\varphi}_{V_1}
  +\int_{Q_t}w_\lambda\Delta\varphi\\
  &\to (u_0, \varphi)_{1,\eps} + \ip{\int_0^tB(s)\,dW(s)}{\varphi}_{V_1}
  +\int_{Q_t}w\Delta\varphi=\ip{R_\eps u(\omega,t)}{\varphi}_{V_1}\,,
\end{align*}
so that $R_\eps u_\lambda(\omega,t)\wstarto R_\eps u(\omega,t)$
in $V_4^*$. Since $u_\lambda(\omega)$
is bounded in $L^\infty(0,T; H)$ and $\eps u_\lambda(\omega)$
is bounded in $L^\infty(0,T; V_1)$, 
we deduce that $u_\lambda(\omega,t)\wto u(\omega,t)$ in $H$
and $\eps u_\lambda(\omega,t)\wto \eps u(\omega,t)$ in $V_1$
for every $t\in[0,T]$. 
Letting $\lambda\searrow0$ in the approximated equation and recalling that $\omega\in\Omega'$
is arbitrary, we infer that, $\P$-almost surely,
\[
  (u(\omega),\varphi)_{1,\eps} - \int_0^\cdot\!\!\int_Dw(\omega,s)\Delta\varphi\,ds 
  =(u_0,\varphi)_{1,\eps} + \ip{\int_0^\cdot B(s)\,dW(s)}{\varphi}_{V_1} \qquad\forall\,\varphi\in V_4\,.
\]
By comparison in the limit equation together with the fact that 
$u \in L^\infty(0,T; H)$ and $\eps u\in L^\infty(0,T; V_1)$ $\P$-almost surely,
we infer that $u \in C^0_w([0,T]; H)\embed C^0([0,T]; V_1^*)$
and $\eps u \in C^0_w([0,T]; V_1)\embed C^0([0,T]; H)$ $\P$-almost surely.

Let us prove now some regularity properties of the triple $(u,w, \xi)$
with respect to $\omega$. Indeed, as we have fixed $\omega\in\Omega'$
in passing to the limit, the subsequences along which we have convergence could
possibly depend on $\omega$, hence any measurability information is lost as $\lambda\searrow0$.
In order to recover measurability properties for the limiting processes, 
we first prove that the solution components
$u$ and $\xi-\xi_D$ satisfying 
pointwise the limit equation are unique.

Let then $(u_i, w_i, \xi_i)$ such that, for $i=1,2$, $\P$-almost surely,
\begin{gather*}
  u_i \in C^0([0,T]; V_1^*)\cap L^2(0,T; V_2)\,, \qquad
  \eps u_i \in C^0([0,T]; H)\cap L^\infty(0,T; V_1)\,,\\
  w_i,\xi_i \in L^1(Q)\,,\qquad w_i=-\Delta u_i + \beta(u_i) + \pi(u_i) - g\,,\\
  \widehat\beta(u_i) + \widehat{\beta^{-1}}(\xi_i) \in L^1(Q)\,,\qquad
  \xi_i \in \beta(u_i) \quad\text{a.e.~in } Q\,,\\
  (u_i, \varphi)_{1,\eps} - \int_0^\cdot\!\!\int_Dw_i(s)\Delta\varphi\,ds = (u_0,\varphi)_{1,\eps}
  +\ip{B\cdot W}{\varphi}_{V_1} \qquad\forall\,\varphi\in V_4\,.
\end{gather*}
Now, thanks to the classical elliptic regularity results, there is $m\in\enne$
such that $(I-\sigma\Delta)^{-m}$ maps continuously $L^1(D)$ into $V_4$
for every $\sigma>0$. Fixing such $m$, taking as test function $\varphi=(I-\sigma\Delta)^{-m}y$
for any arbitrary $y\in H$, using the fact that $(I-\sigma\Delta)^{-m}$ is self-adjoint and commutes
with $-\Delta$, we deduce that 
\[
  \partial_tR_\eps(u_1^\sigma-u_2^\sigma) 
  - \Delta(w_1^\sigma-w_2^\sigma)=0  \qquad
  (u_1^\sigma-u_2^\sigma)(0)=0 \qquad\P\text{-a.s.}\,,
\]
in the strong sense on $H$,
where we have used the superscript $\sigma$ to denote the action of
$(I-\sigma\Delta)^{-m}$. Testing by the constant $1$ it easily follows that $(u_1^\sigma-u_2^\sigma)_D=0$,
so that testing by $\phi_\eps(R_\eps(u_1^\sigma-u_2^\sigma))=\mathcal N(u_1^\sigma-u_2^\sigma)$ yields
\[
  \norm{(u_1^\sigma-u_2^\sigma)(t)}_*^2 + \eps\norm{(u_1^\sigma-u_2^\sigma)(t)}^2_H
  +\int_{Q_t}(w_1^\sigma-w_2^\sigma)(u_1^\sigma-u_2^\sigma)=0 \qquad\forall\,t\in[0,T]\,,
\]
from which, by definition of $w_i^\sigma$ and the Lipschitz-continuity of $\pi$,
\begin{align*}
  \norm{(u_1^\sigma-u_2^\sigma)(t)}_*^2 &+ \eps\norm{(u_1^\sigma-u_2^\sigma)(t)}^2_H
  +\int_{Q_t}|\nabla(u_1^\sigma-u_2^\sigma)|^2 + \int_{Q_t}(\xi_1^\sigma-\xi_2^\sigma)(u_1^\sigma-u_2^\sigma)\\
   &\leq C_\pi\int_{Q_t}|u_1-u_2|^2 \qquad\forall\,t\in[0,T]\,.
\end{align*}
We want to let $\sigma\to0$ in the previous inequality. Thanks to the properties of $(I-\sigma\Delta)^{-1}$
and the regularity of $u_1-u_2$,
it is readily seen that 
\[
  (u_1^\sigma-u_2^\sigma)(t) \to (u_1-u_2)(t) \qquad\text{in } H \qquad\forall\,t\in[0,T]\,.
\]
and
\[
  u_1^\sigma-u_2^\sigma \to u_1-u_2 \quad\text{in } L^2(0,T; V_1)\,, \qquad
  \xi_1^\sigma-\xi_2^\sigma \to \xi_1-\xi_2 \quad\text{in } L^1(Q)\,.
\]
Now, since $\widehat{\beta^{-1}}(\xi_2)\in L^1(Q)$, the symmetry assumption
on $\widehat\beta$ ensures that there is $\delta\in(0,1)$ 
such that $\widehat{\beta^{-1}}(\delta|\xi_2|)\in L^1(Q)$, 
as one can easily check. Hence,
using the Young inequality, the symmetry of $\widehat\beta$
and the Jensen inequality for the positive operator $(I-\sigma\Delta)^{-1}$ 
(see \cite{haase} for reference), we have that 
\begin{align*}
  \pm\frac\delta4(\xi_1^\sigma-\xi_2^\sigma)(u_1^\sigma-u_2^\sigma)&\leq
  \widehat\beta\left(\pm\frac{u_1^\sigma-u_2^\sigma}{2}\right)
  +\widehat{\beta^{-1}}\left(\frac{\delta\xi_1^\sigma-\delta\xi_2^\sigma}{2}\right)\\
  &\leq 
  c+\widehat\beta(u_1^\sigma)+\widehat\beta(u_2^\sigma) + \widehat{\beta^{-1}}(\xi_1^\sigma)
  + \widehat{\beta^{-1}}(-\delta\xi_2^\sigma)\\
  &\leq(I-\sigma\Delta)^{-m}\left(c+\widehat\beta(u_1)+\widehat\beta(u_2) + \widehat{\beta^{-1}}(\xi_1)
  + \widehat{\beta^{-1}}(-\delta\xi_2)\right)
\end{align*}
for a positive constant $c$ depending only on $\widehat\beta$. Since the term in bracket on 
the right-hand side belongs to $L^1(Q)$, the right-hand side converges in $L^1(Q)$
by the properties of the resolvent, hence it is uniformly integrable.
Consequently, we deduce that the family $\{(\xi_1^\sigma-\xi_2^\sigma)(u_1^\sigma-u_2^\sigma)\}_\sigma$
is uniformly integrable in $Q$. By Vitali's convergence theorem it follows that 
\[
  (\xi_1^\sigma-\xi_2^\sigma)(u_1^\sigma-u_2^\sigma) \to 
  (\xi_1-\xi_2)(u_1-u_2) \qquad\text{in } L^1(Q)\,.
\]
Letting then $\sigma\to 0$ we get that, for every $\eta>0$,
\begin{align*}
  \norm{(u_1-u_2)(t)}_*^2 &+ \eps\norm{(u_1-u_2)(t)}^2_H
  +\int_{Q_t}|\nabla(u_1-u_2)|^2 + \int_{Q_t}(\xi_1-\xi_2)(u_1-u_2)\\
   &\leq C_\pi\int_{Q_t}|u_1-u_2|^2\qquad\forall\,t\in[0,T]\,.
\end{align*}
Since $(u_1-u_2)_D=0$, the Poincar\'e inequality and
the Gronwall lemma yield $u_1=u_2$.
By comparison in the equation we obtain then
\[
  \int_Q(\xi_1-\xi_2)\Delta\varphi=0 \qquad\forall\,\varphi\in V_4\,.
\]
Choosing $\varphi=\mathcal Ny$ for any arbitrary $y\in V_2\cap V_{1,0}$ 
yields also $\xi_1-(\xi_1)_D=\xi_2-(\xi_2)_D$.

The uniqueness of the solution components $u$ and $\xi-\xi_D$
imply by a classical argument of real analysis that 
the convergence of $u_\lambda$ and $\beta_\lambda(u_\lambda)-\beta_\lambda(u_\lambda)_D$
hold along the entire sequence $\lambda$, independently of $\omega$.
This ensures in turn that $u$ is a predictable $V_1$-valued process, 
progressively measurable in $V_2$, 
weakly*-measurable in $L^\infty(0,T; H)$,
that $\xi$ is a predictable $L^1(D)$-valued process
and that $w$ is progressively measurable and adapted in $L^1(D)$.
For a detailed argument of measurability, the reader can refer to \cite[\S~3.6]{scar-SCH}.

Finally, by the weak-lower semicontinuity of the norms and the convex integrands,
the estimates in expectations \eqref{est5}--\eqref{est7} imply that 
\begin{gather*}
  u \in L^2(\Omega; C^0([0,T]; V_1^*))\cap L^2(\Omega\times(0,T); V_2)\,,\\
  \eps u\in L^2(\Omega; C^0([0,T]; H)) \cap L^2(\Omega; L^\infty(0,T; V_1))\,,\\
  \xi\,,w \in L^1(\Omega\times(0,T)\times D)\,, \qquad
  w=-\Delta u+\xi + \pi(u) - g\,,\\
  \widehat\beta(u) + \widehat{\beta^{-1}}(\xi) \in L^1(\Omega\times(0,T)\times D)\,,
\end{gather*}
and this completes the proof of existence of solutions with additive noise under
the additional assumption \eqref{hyp_B}.

\subsection{Conclusion}
\label{concl}
As we have anticipated at the beginning of Section~\ref{sec:WP},
we now remove the extra assumption \eqref{hyp_B} on the operator $B$.
Let us suppose only that $B$ is a $\cL^2(U,H)$-progressively measurable process such that
\[
  B \in L^2(\Omega\times(0,T); \cL^2(U, H))\,.
\]
For every $n\in\enne$, $n>0$, let us define the $\cL(U,V_4)$-valued process
\[
  B_n := (I-1/n\Delta)^{-3}B \in L^2(\Omega\times(0,T); \cL^2(U, V_4))\,,
\]
which satisfies, as it is readily seen by classical elliptic regularity results, as $n\to\infty$,
\[
  B_n \to B \qquad\text{in } L^2(\Omega\times(0,T); \cL^2(U, H))\,.
\]
Now, for every $n\in\enne$, $n>0$,
let $(u_n,w_n, \xi_n)$ be the strong solution to the problem \eqref{eq1}--\eqref{eq4}
with respect to the data $(u_0, g, B_n)$, as given by
the proof just performed in the previous sections.
Going back to Section~\ref{exp_est}, we notice that the estimates in expectation
\eqref{est5}--\eqref{est7} only depend on the 
$L^2(\Omega\times(0,T); \cL^2(U, H))$-norm of $B$: hence, 
since $(B_n)_n$ is uniformly bounded in $L^2(\Omega\times(0,T); \cL^2(U, H))$, 
by weak lower semicontinuity of the norms we infer that 
\begin{gather*}
  \norm{u_n}^2_{L^2(\Omega; C^0([0,T]; V_1^*))\cap
  L^2(\Omega; L^\infty(0,T; H))\cap L^2(\Omega\times(0,T); V_2)} \leq M\,,\\
   \eps\norm{u_n}^2_{L^2(\Omega; C^0([0,T]; H))\cap L^2(\Omega; L^\infty(0,T; V_1))} \leq M\,,\\
  \norm{\widehat\beta(u_n)}_{L^1(\Omega\times Q)} + \norm{\widehat{\beta^{-1}}(\xi_n)}_{L^1(\Omega\times Q)}\leq M\,.
\end{gather*}
Let us show a strong convergence for the sequence $(u_n)_n$. 
We define the operator
\[
  \mathcal L:L^1(D)\to V_4^*\,, \qquad \ip{\mathcal L v}{\varphi}_{V_4}:=\int_Dv\Delta\varphi\,, \quad v\in L^1(D)\,,
  \quad\varphi\in V_4\,,
\]
which clearly extends $\Delta$ to $L^1(D)$, i.e.~$-\mathcal L_{|V_1}=-\Delta$. With this notation, 
the solutions $(u_n, w_n, \xi_n)$ satisfy
\[
  R_\eps u_n - \int_0^\cdot\mathcal Lw_n(s)\,ds = R_\eps u_0 + \int_0^\cdot B_n(s)\,dW(s) 
  \qquad\forall\,n\in\enne\,.
\]
Now, by elliptic regularity, there is $m\in\enne$ such that $(I-\sigma\Delta)^{-m}\in \cL(L^1(D), V_4)$.
Taking the difference between the equations at any arbitrary $n,k\in\enne$, $n,k>0$,
applying the operator $(I-\sigma\Delta)^{-m}$ and using the fact that 
$(I-\sigma\Delta)^{-m}$ commutes with $\mathcal L$, we get
\[
  R_\eps(u^\sigma_n-u^\sigma_k) 
  - \int_0^\cdot\Delta (w^\sigma_n-w^\sigma_k)(s)\,ds = \int_0^\cdot (B^\sigma_n-B_k^\sigma)(s)\,dW(s)\,,
\]
where we have used again the superscript $\sigma$ for the action of the resolvent $(I-\sigma\Delta)^{-m}$.
Since $(u^\sigma_n-u_k^\sigma)_D=0$, 
the classical It\^o's formula for $\Phi_\eps(R_\eps(u_n^\sigma-u_k^\sigma))$ then yields, 
for every $t\in[0,T]$, $\P$-almost surely,
\begin{align*}
  &\frac12\norm{(u^\sigma_n-u_k^\sigma)(t)}_*^2 + \frac\eps2\norm{(u^\sigma_n-u_k^\sigma)(t)}_H^2
  +\int_{Q_t}|\nabla(u^\sigma_n-u_k^\sigma)|^2
   +\int_{Q_t}(\xi_n^\sigma-\xi_k^\sigma)(u_n^\sigma - u_k^\sigma)\\
   &=\int_{Q_t}(\pi(u_n)^\sigma - \pi(u_k)^\sigma)(u_n^\sigma - u_k^\sigma)\\
   &\qquad\qquad+\frac12\int_0^t\operatorname{Tr}((B_n^\sigma-B_k^\sigma)^*(s)
   \phi_\eps(B_n^\sigma-B_k^\sigma)(s))\,ds\\
   &\qquad\qquad+\int_0^t\left(\phi_\eps (R_\eps(u_n^\sigma-u_k^\sigma))(s), 
   (B_n^\sigma-B_k^\sigma)(s)\,dW(s)\right)_{H}\,.
\end{align*}
Now, by the contraction properties of $(I-\sigma\Delta)^{-1}$ and Lipschitz-continuity of $\pi$ we have
\[
  \int_{Q_t}(\pi(u_n)^\sigma - \pi(u_k)^\sigma)(u_n^\sigma - u_k^\sigma)\leq
  C_\pi\int_{Q_t}|u_n - u_k|^2\,.
\]
Arguing as in Section~\ref{exp_est} we infer that 
\begin{align*}
  &\int_0^t\operatorname{Tr}((B_n^\sigma-B_k^\sigma)^*(s)
   \phi_\eps(B_n^\sigma-B_k^\sigma)(s))\,ds\\
   &\qquad\qquad\lesssim 
   \int_0^t\norm{(B_n^\sigma-B_k^\sigma)(s)}^2_{\cL^2(U, V_1^*)}\,ds\leq
   \int_0^t\norm{(B_n-B_k)(s)}^2_{\cL^2(U, V_1^*)}\,ds\,,
\end{align*}
and similarly, for every $\delta>0$,
\begin{align*}
  &\E\sup_{t\in[0,T]}\int_0^t\left(\phi_\eps(R_\eps(u_n^\sigma-u_k^\sigma))(s), 
  (B_n^\sigma-B_k^\sigma)(s)\,dW(s)\right)_{H}\\
  &\quad\leq \delta\E\sup_{t\in[0,T]}\norm{(u_n^\sigma-u_k^\sigma)(t)}^2_*
  + C_\delta\norm{B_n-B_k}^2_{L^2(\Omega; L^2(0,T; \cL^2(U, V_1^*)))}\,.
\end{align*}
Rearranging the terms, choosing $\delta$ sufficiently small
and using the Gronwall lemma we deduce that 
\begin{align*}
  &\norm{u_n^\sigma - u_k^\sigma}^2_{L^2(\Omega; C^0([0,T]; V_1^*))}
  +\eps\norm{u_n^\sigma - u_k^\sigma}^2_{L^2(\Omega; C^0([0,T]; H))}
  +\norm{\nabla(u_n^\sigma - u_k^\sigma)}^2_{L^2(\Omega\times(0,T); H)}\\
  &\qquad + \E\int_{Q}(\xi_n^\sigma-\xi_k^\sigma)(u_n^\sigma - u_k^\sigma)
  \lesssim\norm{B_n-B_k}^2_{L^2(\Omega; L^2(0,T; \cL^2(U, V_1^*)))}\,.
\end{align*}
We want to let $\sigma\to0$ in the last inequality. The first three terms converge
to the corresponding ones without $\sigma$ by the approximation properties
of the operator $(I-\sigma\Delta)^{-1}$ (see for example \cite[\S~3.7]{scar-SCH}).
Proceeding as in the previous section we also have the convergence
\[
  (\xi_n^\sigma-\xi_k^\sigma)(u_n^\sigma - u_k^\sigma)\to (\xi_n-\xi_k)(u_n - u_k) \qquad\text{in } L^1(\Omega\times Q)\,,
\]
so that letting $\sigma\to 0$ and employing the monotonicity of $\beta$ we infer
\begin{align*}
  \norm{u_n - u_k}^2_{L^2(\Omega; C^0([0,T]; V_1^*))}
  &+\eps\norm{u_n - u_k}^2_{L^2(\Omega; C^0([0,T]; H))}
  +\norm{\nabla(u_n - u_k)}^2_{L^2(\Omega\times(0,T); H)}\\
  &\lesssim\norm{B_n-B_k}^2_{L^2(\Omega; L^2(0,T; \cL^2(U, V_1^*)))}\,,
\end{align*}
where the implicit constant is independent of $\eps$, $n$ and $k$.
Since the right-hand side converges to $0$ as $n,k\to\infty$, we deduce the strong 
convergence for $(u_n)_n$ in the respective spaces. This information together with 
the estimates obtained at the beginning of this section allows to pass to the limit 
in the approximated equation as $n\to \infty$ and deduce the existence of a strong solution
for the limit problem, using again classical tools of convex analysis as in the previous 
section.

\subsection{Continuous dependence with additive noise}
\label{cont_dep_add}
Let $(u_0^1, g_1, B_1)$ and $(u_0^2, g_2, B_2)$ satisfy the assumptions (H1)--(H4) 
and \eqref{B1}--\eqref{comp_data}.
Then testing the equation satisfied by the difference 
$u_1-u_2$ by the constant $1$
it is readily seen that $(u_1-u_2)_D=0$.
Hence, arguing as in the previous Section~\ref{concl},
writing It\^o's formula for $\Phi_\eps(R_\eps(u_1-u_2))$ we can infer that 
\begin{align*}
  &\frac12\norm{(u_1-u_2)(t)}_*^2 + \frac\eps2\norm{(u_1-u_2)(t)}_H^2
  +\int_{Q_t}|\nabla(u_1-u_2)|^2
   +\int_{Q_t}(\xi_1-\xi_2)(u_1 - u_2)\\
   &=\frac12\norm{(u_0^1-u_0^2)}_*^2 + \frac\eps2\norm{(u_0^1-u_0^2)}_H^2
   +\int_{Q_t}(g_1-g_2 -\pi(u_1) + \pi(u_2))(u_1 - u_2)\\
   &\qquad\qquad+\frac12\int_0^t\operatorname{Tr}((B_1-B_2)^*(s)
   \phi_\eps(B_1-B_2)(s))\,ds\\
   &\qquad\qquad+\int_0^t\left(\phi_\eps(R_\eps(u_1-u_2))(s), (B_1-B_2)(s)\,dW(s)\right)_{1,\eps}\,.
\end{align*}
The case $p=2$ is immediate:
estimating the terms on the right-hand side through the Young, Poincar\'e, 
Burkholder-Davis-Gundy inequalities exactly as in Section~\ref{concl} yields
\begin{align*}
  &\norm{u_1 - u_2}^2_{L^2(\Omega; C^0([0,T]; V_1^*))}
  +\eps\norm{u_1 - u_2}^2_{L^2(\Omega; C^0([0,T]; H))}
  +\norm{\nabla(u_1 - u_2)}^2_{L^2(\Omega\times(0,T); H)}\\
  &\qquad\lesssim\norm{u_0^1-u_0^2}^2_{L^2(\Omega; V_1^*)} + 
  \norm{g_1-g_2}^2_{L^2(\Omega\times(0,T); V_1^*)} + 
  \norm{B_1-B_2}^2_{L^2(\Omega; L^2(0,T; \cL^2(U, V_1^*)))}\,,
\end{align*}
where the implicit constant is independent of $\eps$. In order to 
prove the result in general for $p\in[1,2]$ it is enough to take the $p/2$-power in It\^o's formula,
and proceed in the same way, getting
\begin{align*}
  &\norm{u_1-u_2}_{L^\infty(0,t; V_1^*)}^p + \eps^{p/2}\norm{u_1-u_2}_{L^\infty(0,T; H)}^p
  +\norm{\nabla(u_1-u_2)}^{p}_{L^2(0,T; H)}\\
   &\lesssim\norm{(u_0^1-u_0^2)}_*^p + \eps^{p/2}\norm{(u_0^1-u_0^2)}_H^p
   +\norm{g_1-g_2}^p_{L^2(0,T; V_1^*)}+\norm{u_1-u_2}^p_{L^2(0,T; H)}\\
   &\qquad\qquad+\left|\int_0^t\operatorname{Tr}((B_1-B_2)^*(s)
   \phi_\eps(B_1-B_2)(s))\,ds\right|^{p/2}\\
   &\qquad\qquad+\sup_{r\in[0,t]}\left|\int_0^r\left(\phi_\eps(R_\eps(u_1-u_2))(s), 
   (B_1-B_2)(s)\,dW(s)\right)_{H}\right|^{p/2}\,.
\end{align*}
It is easy to check that the trace term on the right-hand side is bounded (modulo
a postive constant independent of $\eps$) by $\norm{B_1-B_2}^p_{L^2(0,T; \cL^2(U, V_1^*))}$.
Furthermore, the Burkholder-Davis-Gundy inequality with exponent $p/2$ yields,
for every $\sigma>0$, 
\begin{align*}
  &\E\sup_{r\in[0,t]}\left|\int_0^r\left(\phi_\eps(R_\eps(u_1-u_2))(s), (B_1-B_2)(s)\,dW(s)\right)_{H}\right|^{p/2}\\
  &\qquad\lesssim\E\left(\int_0^T\norm{\phi_\eps(R_\eps(u_1-u_2)(s))}_{V_1}^2
  \norm{((B_1-B_2)(s)}^2_{\cL^2(U, V_{1}^*)}\,ds\right)^{p/4}\\
  &\qquad\leq\sigma\E\norm{u_1-u_2}_{L^\infty(0,T; V_1^*)}^p
  +C_\sigma\E\norm{B_1-B_2}^p_{L^2(0,T; \cL^2(U, V_1^*))}\,.
\end{align*}
Taking expectations, choosing $\sigma$ sufficiently small and employing again the Gronwall lemma, 
we obtain the desired result.

\subsection{Existence with multiplicative noise}
Let us focus now on the multiplicative noise case: let $(u_0, g, B)$ satisfy
the assumpitons (H1)--(H4)
and \eqref{B3}--\eqref{B4}.
For any progressively measurable $V_1$-valued process $y\in L^2(\Omega\times(0,T); V_1)$, 
the linear growth assumption on $B$ readily implies that $B(\cdot, \cdot, y)$
is progressively measurable and that $B(\cdot, \cdot, y)\in L^2(\Omega\times(0,T); \cL^2(U, V_1^*))$.
Hence, we are in the hypothesis of the additive noise case, and there exists 
a strong solution $(u_y, w_y, \xi_y)$ to the problem with respect to the data $(u_0, g, B(y))$.
Since the solution component $u_y$ is unique, 
for every $T_0\in(0,T]$ it is well defined the map 
\[
  \Gamma: L^2(\Omega\times(0,T_0); V_1)\to 
  L^2(\Omega; C^0([0,T_0]; H))\cap L^2(\Omega; L^\infty(0,T_0; V_1))\cap L^2(\Omega\times(0,T_0); V_2)
\]
such that $\Gamma:y\mapsto u_y$. It is clear that $(u, w, \xi)$ 
is a strong solution on $[0,T_0]$
with multiplicative noise
if and only if $u$ is a fixed point for $\Gamma$ and $(w, \xi)=(w_u, \xi_u)$. 

Let $y_1, y_2 \in L^2(\Omega\times(0,T_0); V_1)$ progressively measurable and set
$u_1:=\Gamma(y_1)$ and $u_2:=\Gamma(y_2)$. Thanks to \eqref{comp_data'}
and the fact that $B$ is $\cL^2(U, V_{1,0}^*)$-valued, 
we can apply the continuous dependence property 
proved in Section~\ref{cont_dep_add}: using also the Lipschitz-continuity of $B$, we have that 
\begin{align*}
  \norm{u_1 - u_2}^2_{L^2(\Omega; C^0([0,T_0]; V_1^*))}
  &+\eps\norm{u_1 - u_2}^2_{L^2(\Omega; C^0([0,T_0]; H))}
  +\norm{\nabla(u_1 - u_2)}^2_{L^2(\Omega\times(0,T_0); H)}\\
  &\lesssim
  \norm{B(y_1)-B(y_2)}^2_{L^2(\Omega; L^2(0,T_0; \cL^2(U, V_1^*)))}\\
  &\lesssim\norm{y_1-y_2}^2_{L^2(\Omega; L^2(0,T_0; V_1^*))}
  \leq T_0\norm{y_1-y_2}^2_{L^2(\Omega; C^0([0,T_0]; V_1^*))}\,.
\end{align*}
This shows in particular that, for every $T_0\in(0,T]$, the map $\Gamma$ continuously extends 
in a canonical way to
\[
  \tilde\Gamma: L^2(\Omega; C^0([0,T_0]; V_1^*))\to
  L^2(\Omega; C^0([0,T_0]; H))\cap L^2(\Omega\times(0,T_0); V_1)
\]
and that $\tilde \Gamma$ is a contraction on $L^2(\Omega; C^0([0,T_0]; V_1^*))$
provided that $T_0$ is chosen sufficiently small. Hence, fixing such $T_0>0$,
there exists a unique $u\in L^2(\Omega; C^0([0,T_0]; V_1^*))$ such that $u=\tilde\Gamma u$.
Moreover, since $u=\tilde u\in L^2(\Omega\times(0,T_0); V_1)$ by definition
of $\tilde\Gamma$, we also deduce that 
$u=\Gamma u \in L^2(\Omega; C^0([0,T_0]; H))\cap L^2(\Omega; L^\infty(0,T_0; V_1))
\cap L^2(\Omega\times(0,T_0); V_2)$ by definition of $\Gamma$.
Hence, $u$ is a strong solution to the problem with multiplicative noise on $[0,T_0]$
together with some respective solution components $(w,\xi)$ (not necessarily unique).
Now a strong solution on the whole interval $[0,T]$ can be obtained 
by a classical patching argument on the subintervals $[T_0, 2T_0]$, \ldots, until $T$
iterating the computations just performed.

\subsection{Continuous dependence with multiplicative noise}
Let now $(u_0^1, g_1)$ and $(u_0^2, g_2)$ satisfy (H1)--(H4) 
and \eqref{comp_data}.
The fact that $B$ takes values in $\cL^2(U, V_{1,0}^*)$ and 
\eqref{comp_data} imply in particular that 
\[
  (u_0^1)_D+(B(u_1)\cdot W)_D = (u_0^2)_D+(B(u_2)\cdot W)_D\,.
\]
Hence, by the continuous dependence result
with additive noise case and the Lipschitz continuity of $B$ we have,
for every $T_0\in(0,T]$ and $p\in[1,2]$,
\begin{align*}
  \norm{u_1 - u_2}^p_{L^p(\Omega; C^0([0,T_0]; V_1^*))}
  &+\eps^{p/2}\norm{u_1 - u_2}^p_{L^p(\Omega; C^0([0,T_0]; H))}
  +\norm{\nabla(u_1 - u_2)}^p_{L^p(\Omega\times(0,T_0); H)}\\
  &\lesssim
  \norm{B(u_1)-B(u_2)}^p_{L^p(\Omega; L^2(0,T_0; \cL^2(U, V_1^*)))}\\
  &\lesssim\norm{y_1-y_2}^p_{L^p(\Omega; L^2(0,T_0; V_1^*))}
  \leq T_0^{p/2}\norm{y_1-y_2}^p_{L^p(\Omega; C^0([0,T_0]; V_1^*))}\,,
\end{align*}
where all the implicit constants are independent of $\eps$.
Now the continuous dependence result follows choosing again 
$T_0$ sufficiently small and by a patching argument.


\section{Vanishing viscosity limit as $\eps\searrow0$}
\setcounter{equation}{0}
\label{asymp}

\subsection{Additive noise}
We begin with the additive noise case: let us work thus in the framework of 
Theorem~\ref{thm3}. We recall that $(u_\eps, w_\eps, \xi_\eps)$
are strong solutions to problem \eqref{eq1}--\eqref{eq4}
with respect to $\eps>0$ and data $(u_{0\eps}, g_\eps, B_\eps)$.
Note that thanks to the continuous dependence property 
contained in Theorem~\ref{thm1}, the solution component $u_\eps$
is uniquely determined.

First of all, we
assume that $B$ satisfies the stronger assumption \eqref{hyp_B}
and that $(B_\eps)_\eps$ is bounded in the space \eqref{hyp_B}:
we will show how to remove this further hypothesis 
later on.
Going back to Sections~\ref{path_est}--\ref{exp_est} and noting that 
the estimates \eqref{est1}--\eqref{est6} are independent of $\eps$, 
we deduce by lower semicontinuity that
for every $\omega\in\Omega'$ with $\P(\Omega'=1)$
there is a positive constant $M_\omega$
independent of $\eps$ such that
\begin{gather*}
  \norm{u_\eps(\omega)}^2_{C^0([0,T]; V_1^*)} + \eps\norm{u_\eps(\omega)}^2_{C^0([0,T]; H)}
  +\norm{\nabla u_\eps(\omega)}^2_{L^2(0,T; H)}\leq M_\omega\,,\\
  \norm{\widehat\beta(u_\eps(\omega))}_{L^1(Q)} + \norm{\widehat{\beta^{-1}}(\xi_\eps(\omega))}_{L^1(Q)}\leq M_\omega\,,\\
  \norm{u_\eps(\omega)}^2_{L^\infty(0,T; H)} + \eps\norm{\nabla u_\eps(\omega)}^2_{L^\infty(0,T; H)}
  +\norm{u_\eps(\omega)}^2_{L^2(0,T; V_2)} \leq M_\omega\,,\\
  \norm{\partial_tR_\eps(u_\eps-B_\eps\cdot W)(\omega)}_{L^1(0,T; V_4^*)}\leq M_\omega\,.
\end{gather*}
and similarly, for a positive constant $M$ independent of $\eps$,
\begin{gather*}
  \norm{u_\eps}^2_{L^2(\Omega;C^0([0,T]; V_1^*))} 
  + \eps\norm{u_\eps}^2_{L^2(\Omega;C^0([0,T]; H))}
  +\norm{\nabla u_\eps}^2_{L^2(\Omega\times(0,T); H)}\leq M\,,\\
  \norm{\widehat\beta(u_\eps)}_{L^1(\Omega\times Q)} 
  + \norm{\widehat{\beta^{-1}}(\xi_\eps)}_{L^1(\Omega\times Q)}\leq M\,,\\
  \norm{u_\eps}^2_{L^2(\Omega; L^\infty(0,T; H))} 
  + \eps\norm{\nabla u_\eps}^2_{L^2(\Omega; L^\infty(0,T; H))}
  +\norm{u_\eps}^2_{L^2(\Omega\times(0,T); V_2)} \leq M\,.
\end{gather*}
We fix now $\omega\in\Omega'$. By the pathwise estimates,
using similar arguments to the ones performed in Section~\ref{pass_lim},
we deduce that $(R_\eps(u_\eps-B_\eps\cdot W))_\eps=(R_\eps u_\eps-B\cdot W)_\eps$
is relatively compact in $V_1^*$. Moreover, by definition of $R_\eps^{-1}$
and the fact that $(u_\eps)_\eps$ is bounded in $L^2(0,T; H)$, it follows that 
$(u_\eps)_\eps$ is relatively compact in $V_1$.
Hence, we 
infer the convergences
\begin{gather*}
  u_\eps(\omega)\wstarto u(\omega) \quad\text{in } L^\infty(0,T; H)\,,\qquad
  u_\eps(\omega)\wto u(\omega) \quad\text{in } L^2(0,T; V_2)\,,\\
  u_\eps(\omega)\to u(\omega) \quad\text{in } L^2(0,T; V_1)\,,\qquad
  \eps u_\eps(\omega)\to 0 \quad\text{in } L^\infty(0,T; V_1)\,,\\
  w_\eps(\omega)\wto w(\omega) \quad\text{in } L^1(Q)\,,\qquad
  \xi_\eps(\omega)\wto \xi(\omega) \quad\text{in } L^1(Q)\,,
\end{gather*}
for certain $u(\omega)\in L^\infty(0,T; H)\cap L^2(0,T; V_2)$,
$\xi(\omega)\in L^1(Q)$ and $w(\omega)\in L^1(Q)$.

Let us show that $(u,\xi,w)$ is a solution to the problem corresponding to $\eps=0$. 
Arguing again as in Section~\ref{pass_lim}, we infer that 
$u$ is a predictable $H$-valued process, progressively measurable
adapted in $V_2$ and with continuous trajectories in $V_1^*$.
Furthermore, the estimates in expectations yield the desired convergences 
for $u_\eps$: indeed, the weak convergences are immediate, while 
the strong convergence follows 
by a classical consequence of the Severini-Egorov theorem
from the fact that $u_\eps\to u$ in $L^2(0,T; V_1)$
$\P$-almost surely and the boundedness of $(u_\eps)_\eps$ in $L^2(\Omega\times(0,T); V_1)$.
As far as $\xi$ is concerned, proceeding as in Section~\ref{pass_lim} 
we can choose
$\xi$ to be a predictable $L^1(D)$-valued process such that 
$\xi_\eps\wto\xi$ in $L^1(\Omega\times Q)$. A similar argument holds for $w$.
It is also clear using the convergences of $(u_\eps, w_\eps, \xi_\eps)$
that $(u,w,\xi)$ is a strong solution to the problem in the case $\eps=0$.

We show now that it is not restrictive to assume 
that \eqref{hyp_B} holds for the operators $B$ and $(B_\eps)_\eps$. 
Indeed, if this is not the case, 
all the estimates in expectation on $(u_\eps, w_\eps, \xi_\eps)$
continue to hold, as they depend only on the $\cL^2(U,H)$-regularity of $B$
(see for example Section~\ref{exp_est}). Hence, the weak convergences 
in Theorem~\ref{thm3} are still true, as well as $\eps u_\eps\to 0$ in 
$L^2(\Omega; L^\infty(0,T; V_1))$. The problem is the strong convergence
of $u_\eps$ in $L^p(\Omega; L^2(0,T; V_1))$. To this end, 
for every $\delta>0$ we set $B_\delta:=(I-\delta \Delta)^{-2}B$, which satisfies \eqref{hyp_B},
and similarly $B_{\eps\delta}:=(I-\delta\Delta)^{-2}B_\eps$,
which is uniformly bounded in $\eps$ in the space \eqref{hyp_B}.
Let $(u_{\eps\delta}, w_{\eps\delta}, \xi_{\eps\delta})$ and $(u_\delta, w_\delta, \xi_\delta)$
be any strong solutions with respect to the data 
$(u_{0\eps}, g_\eps, B_{\eps\delta})$ and $(u_0, g, B_\delta)$,
in the cases $\eps>0$ and $\eps=0$, respectively:
since the first solution component is unique, note that $u_\delta$ and $u_{\eps\delta}$
are uniquely determined.
Since we have already proved the convergence result under the stronger assumption
\eqref{hyp_B}, we have that $u_{\eps\delta}\to u_\delta$ in $L^p(\Omega; L^2(0,T; V_1))$
for every $p\in[1,2)$ and every $\delta>0$, as $\eps\searrow0$.
Recalling the compatibility condition \eqref{data_app4} and the fact that $(I-\delta\Delta)^{-2}$
preserves the mean, 
by the continuous dependence property 
of Theorem~\ref{thm1} we have
\begin{align*}
  &\norm{u_\eps-u}_{L^p(\Omega; L^2(0,T; V_1))}\\
  &\leq
  \norm{u_\eps-u_{\eps\delta}}_{L^p(\Omega; L^2(0,T; V_1))}
  +\norm{u_{\eps\delta}-u_\delta}_{L^p(\Omega; L^2(0,T; V_1))}
  +\norm{u_\delta-u}_{L^p(\Omega; L^2(0,T; V_1))}\\
  &\lesssim\norm{u_\eps-u_{\eps\delta}}_{L^2(\Omega; C^0([0,T]; V_1^*))} 
  +\norm{\nabla(u_\eps-u_{\eps\delta})}_{L^2(\Omega; L^2(0,T; H))}
  +\norm{u_{\eps\delta}-u_\delta}_{L^p(\Omega; L^2(0,T; V_1))}\\
  &\qquad+\norm{u_\delta-u}_{L^2(\Omega; C^0([0,T]; V_1^*))} 
  +\norm{\nabla(u_\delta-u)}_{L^2(\Omega; L^2(0,T; H))}\\
  &\lesssim \norm{B_\eps-B_{\eps\delta}}_{L^2(\Omega\times(0,T); \cL^2(U, V_1^*))}
  +\norm{u_{\eps\delta}-u_\delta}_{L^p(\Omega; L^2(0,T; V_1))}
  +\norm{B_\delta-B}_{L^2(\Omega\times(0,T); \cL^2(U, V_1^*))}\\
  &\lesssim \norm{B-B_{\eps}}_{L^2(\Omega\times(0,T); \cL^2(U, V_1^*))}+
  \norm{B-B_{\delta}}_{L^2(\Omega\times(0,T); \cL^2(U, V_1^*))}
  +\norm{u_{\eps\delta}-u_\delta}_{L^p(\Omega; L^2(0,T; V_1))}\,.
\end{align*}
Since $B_\delta\to B$ in $L^2(\Omega\times(0,T); \cL^2(U,H))$, the second term on the right-hand
side can be made arbitrarily small choosing $\delta$ small enough. With such a choice of $\delta$ (fixed),
the first and third terms converge to $0$ as $\eps\searrow0$, so that the strong convergence is proved.

\subsection{Multiplicative noise}
Let us focus now on the multiplicative noise case. We work in the setting of Theorem~\ref{thm4}:
for every $\eps>0$, let $(u_\eps, w_\eps, \xi_\eps)$ be any strong solution to the problem with $\eps>0$
with multiplicative noise given by the operator $B$ and with respect to the data $(u_{0\eps}, g_\eps)$.
Let us also denote by $u$ the unique solution component of the limit problem with $\eps=0$
with multiplicative noise $B$ and data $(u_0, g)$.
Going back to Section~\ref{exp_est}, using the linear growth assumption
of $B$ it is not difficult to check that the estimates corresponding to \eqref{est5}--\eqref{est7}
continue to hold for $(u_\eps,w_\eps,\xi_\eps)$, i.e.~there exists $M>0$ independent of $\eps$ such that 
\begin{gather*}
  \norm{u_\eps}^2_{L^2(\Omega;C^0([0,T]; V_1^*))} 
  + \eps\norm{u_\eps}^2_{L^2(\Omega;C^0([0,T]; H))}
  +\norm{\nabla u_\eps}^2_{L^2(\Omega\times(0,T); H)}\leq M\,,\\
  \norm{\widehat\beta(u_\eps)}_{L^1(\Omega\times Q)} 
  + \norm{\widehat{\beta^{-1}}(\xi_\eps)}_{L^1(\Omega\times Q)}\leq M\,,\\
  \norm{u_\eps}^2_{L^2(\Omega; L^\infty(0,T; H))} 
  + \eps\norm{\nabla u_\eps}^2_{L^2(\Omega; L^\infty(0,T; H))}
  +\norm{u_\eps}^2_{L^2(\Omega\times(0,T); V_2)} \leq M\,.
\end{gather*}
These readily imply the weak convergences of $(u_\eps,w_\eps,\xi_\eps)$
contained in Theorem~\ref{thm4}, as well as
$\eps u_\eps\to 0$ in $L^2(\Omega; L^\infty(0,T; V_1))$. We only need to prove the strong 
convergence $u_\eps\to u$ in $L^p(\Omega; L^2(0,T; V_1))$.
To this end, we denote by $(\tilde u_\eps, \tilde w_\eps, \tilde \xi_\eps)$
a strong solution to the problem with $\eps>0$, data given by $(u_{0\eps}, g)$,
and {\em additive} noise given by $B(u)$. Note that $B(u)$ is an admissible choice 
thanks to the regularity of $u$ and the linear growth assumption of $B$.
Since we have already proved the additive noise case contained in Theorem~\ref{thm3}
and the solution component $u$ is unique, we have that $\tilde u_\eps\to u$
in $L^p(\Omega; L^2(0,T; V_1))$ as $\eps\searrow0$. For this reason, it is natural to
show that the difference $u_\eps-\tilde u_\eps$ converges to $0$: 
since $B$ is $\cL^2(U,H_0)$-valued, the continuous dependence property 
for the problem with additive noise and $\eps>0$ and the Lipschitz 
continuity of $B$ yield
\begin{align*}
  \norm{u_\eps-\tilde u_\eps}_{L^p(\Omega; L^2(0,T; V_1))}&\lesssim
  \norm{u_\eps-\tilde u_\eps}_{L^p(\Omega; C^0([0,T]; V_1^*))}+
  \norm{\nabla(u_\eps-\tilde u_\eps)}_{L^p(\Omega; L^2(0,T; H))}\\
  &\lesssim\norm{B(u_\eps)-B(u)}_{L^p(\Omega; L^2(0,T; \cL^2(U,V_1^*)))}
  \lesssim\norm{u_\eps-u}_{L^p(\Omega; L^2(0,T; V_1^*))}\\
  &\leq\norm{u_\eps-\tilde u_\eps}_{L^p(\Omega; L^2(0,T; V_1^*))}+
  \norm{\tilde u_\eps-u}_{L^p(\Omega; L^2(0,T; V_1^*))}\,.
\end{align*}
Since $T$ is arbitrary and we already know that the last term on the right-hand side converges to $0$,
the Gronwall lemma implies that $u_\eps-\tilde u_\eps\to 0$ in $L^p(\Omega; L^2(0,T; V_1))$, 
from which the required convergence result. As in the case of additive noise, 
it is straightforward now to check that $(u,w,\xi)$ is a strong solution to the problem
with multiplicative noise in the case $\eps=0$ and data $(u_0, g)$.


\section{Regularity}
\setcounter{equation}{0}
\label{reg}
In this last section we prove the regularity results
contained in Theorems~\ref{thm6}--\ref{thm5}.

\subsection{The first result}
Let us focus on the proof of Theorem~\ref{thm6}.
Suppose that $(u_0, g, B)$
have finite $p$-moments
for a certain $p\in[2,+\infty)$ as in the assumptions \eqref{q_mom1}--\eqref{q_mom2}.
Then we argue going back to Section~\ref{exp_est}: 
in the proof of estimate \eqref{est7}, we take the 
$\frac{p}2$-power of It\^o's formula for the square of the $\norm{\cdot}_{1,\eps}$-norm.
Proceeding as in Section~\ref{cont_dep_add} we get
\[\begin{split}
  &\E\norm{u_\lambda}^p_{L^\infty(0,t;H)}+\eps^{p/2}\E\norm{\nabla u_\lambda}_{L^\infty(0,t; H)}^p
  + \E\norm{\Delta u_\lambda}_{L^2(0,t; H)}^p\\
  &\qquad\lesssim\E\norm{u_0}^p_{1,\eps} + \E\norm{g}_{L^2(0,T; H)}^p
  + C_\pi\E\norm{u_\lambda}^p_{L^2(0,t; H)}
  +\E\norm{B}^p_{L^2(0,T;\cL_2(U,H))}\,,
\end{split}\]
from which the desired estimate follows thanks to the Gronwall lemma.

Let us show now the additional regularities for $w$ and $\xi$ in the viscous case $\eps>0$.
First of all, recalling that $\beta$ has cubic growth by assumption, it easily follows that 
\[
  \norm{\beta_\lambda(u_\lambda)}_{L^2(0,T; H)}\lesssim 1+ \norm{u_\lambda}_{L^6(0,T; L^6(D))}^3
  \lesssim1+\norm{u_\lambda}_{L^\infty(0,T; V_1)}^3\,,
\]
and by comparison also 
\[
  \norm{w_\lambda}_{L^2(0,T; H)}\lesssim 1+ \norm{u_\lambda}_{L^\infty(0,T; V_1)}^3\,.
\]
These readily imply that the families $(w_\lambda)_\lambda$ and $(\beta_\lambda(u_\lambda))_\lambda$ 
are uniformly bounded in the space $L^{p/3}(\Omega; L^2(0,T; H))$, from which the thesis follows.

Finally, note that if $\beta\in W^{1,\infty}_{loc}(\erre)$ and $\beta'$ has quadratic growth, then
\[
  |\nabla \beta_\lambda(u_\lambda)|=\beta_\lambda'(u_\lambda)|\nabla u_\lambda|\lesssim
  (1+|u_\lambda|^2)|\nabla u_\lambda|\,,
\]
so that by the estimates already performed, the H\"older inequality and
the fact that $V_1\embed L^6(D)$ we can infer that
\begin{align*}
  \norm{\nabla\beta_\lambda(u_\lambda)}_{L^2(0,T; H)}^2&\lesssim1+\int_Q|u_\lambda|^4|\nabla u_\lambda|^2
  \leq 1+\int_0^T\norm{|u_\lambda|^4}_{L^{3/2}(D)}\norm{|\nabla u_\lambda|^2}_{L^3(D)}\\
  &\leq1+\int_0^T\norm{u_\lambda}^4_{L^6(D)}\norm{\nabla u_\lambda}^2_{L^6(D)}
  \lesssim1+\norm{u_\lambda}^4_{L^\infty(0,T; V_1)}\norm{u_\lambda}^2_{L^2(0,T; V_2)}\,,
\end{align*}
which yields
\[
  \norm{\nabla\beta_\lambda(u_\lambda)}_{L^2(0,T; H)}\lesssim 
  1+ \norm{u_\lambda}^2_{L^\infty(0,T; V_1)}\norm{u_\lambda}_{L^2(0,T; V_2)}
\]
with implicit constant independent of $\lambda$ and $\eps$. Since 
$(u_\lambda)_\lambda$ is uniformly bounded in $L^p(\Omega; L^2(0,T; V_2))$
and $L^{p}(\Omega; L^\infty(0,T; V_1))$, we deduce by H\"older inequality that 
the right-hand is bounded in $L^r(\Omega)$, with $\frac1r=\frac2p+\frac1p=\frac3p$,
i.e.~for $r=p/3$.

\subsection{The second result}
Let us turn the attention to Theorem~\ref{thm5}.
As we have anticipated, the idea is to write a It\^o-type formula for 
the free-energy functional associated to the system.
We start with the viscous case $\eps>0$.

Let $(u,w,\xi)$ be the strong solution to the problem, as in the setting of Theorem~\ref{thm5}.
Note that in this framework there is uniqueness of all the three solution components 
since $\beta$ is assumed to be single-valued, hence the uniqueness of $u$
implies the uniqueness of $\xi$, and consequently of $w$.
From Section~\ref{sec:WP} we know that 
$(u,w,\xi)$ can be obtained as
limit in suitable topologies of some approximated solutions $(u_\lambda, w_\lambda, \xi_\lambda)$
solving the problem where $\beta$ is replaced by its Yosida approximation $\beta_\lambda$
and $\xi_\lambda=\beta_\lambda(u_\lambda)$.
If we denote the action of the resolvent $(I-\sigma\Delta)^{-2}$
by the superscript $\sigma$, for every $\sigma>0$, we have that 
\[
  d(R_\eps u_\lambda^\sigma) -\Delta w_\lambda^\sigma\,dt =
  B^\sigma\,dW\,, \qquad u^\sigma_\lambda(0)=u_0^\sigma\,,
\]
in the strong sense on $H$.
Recall that $w_\lambda^\sigma=-\Delta u_\lambda^\sigma + \beta_\lambda(u_\lambda)^\sigma
+\pi(u_\lambda)^\sigma-g^\sigma$.
We define similarly ${\bf w}_\lambda^\sigma:=-\Delta u_\lambda^\sigma 
+ \beta_\lambda(u_\lambda^\sigma)
+\pi(u_\lambda^\sigma)-g$.

We show here some further uniform estimates on $(u_\lambda, w_\lambda, \beta_\lambda(u_\lambda))$
using the Ginzburg-Landau free-energy functional.
It is natural to consider the regularized version of the 
functional $\mathcal E$ defined as
\[
  \mathcal E_\lambda: V_1^*\to[0,+\infty)\,, \qquad
  \mathcal E_\lambda(y):=\frac12\int_D|\nabla R_\eps^{-1}y|^2 
  + \int_D\widehat\beta_\lambda(R_\eps^{-1}y) + \int_D\widehat\pi(R_\eps^{-1}y)\,,
  \quad y\in V_1^*\,.
\]
Let us show that $\mathcal E_\lambda \in C^2(V_1^*)$. It is clear that $\mathcal E_\lambda$
is Fr\'echet-differentiable with 
\[
  D\mathcal E_\lambda: V^*_1\to V_1\,, \qquad
  D\mathcal E_\lambda(y)=
  R_\eps^{-1}(-\Delta R_\eps^{-1} y + \beta_\lambda(R_\eps^{-1}y) + \pi(R_\eps^{-1}y))\,, 
  \quad y\in V_1^*\,,
\]
from which it follows that $\mathcal E_\lambda\in C^1(V_1^*)$. Moreover, 
using the fact that $V_1\embed L^4(D)$, it is not difficult to check that 
$D\mathcal E_\lambda$ is Fr\'echet-differentiable with 
$D^2\mathcal E_\lambda:V_1^*\to \cL(V_1^*,V_1)$ given by
\[
  D^2\mathcal E_\lambda(y)=R_\eps^{-1}(-\Delta R_\eps^{-1} + \left[
  h\mapsto(\beta_\lambda'(y)+\pi'(y))R_\eps^{-1}h\,,\; h\in V_1^*\right])
\]
It follows in particular that $\mathcal E_\lambda$ and $D \mathcal E_\lambda$
are bounded on bounded subsets of $V_1^*$, and that 
$D\mathcal E_\lambda$ has linear growth.
Moreover, from the equation it also follows that
\[
  D\mathcal E_\lambda(R_\eps u_\lambda^\sigma)=
  R_\eps^{-1}(-\Delta u_\lambda^\sigma + \beta_\lambda(u_\lambda^\sigma))
  +\pi(u_\lambda^\sigma)=R_\eps^{-1}({\bf w}_\lambda^\sigma + g)\,.
\]
Taking these remarks into account, 
It\^o's formula for $\mathcal E_\lambda(R_\eps u_\lambda^\sigma)$ yields, for every $t\in[0,T]$,
\begin{align*}
  \frac12\int_D|\nabla u_\lambda^\sigma(t)|^2 &+ \int_D\widehat\beta_\lambda(u_\lambda^\sigma(t)) 
  +\int_D\widehat\pi(u_\lambda^\sigma(t)) 
  + \int_{Q_t}\nabla w_\lambda^\sigma\cdot\nabla R_\eps^{-1}({\bf w}_\lambda^\sigma + g)\\
  &=\frac12\int_D|\nabla u_0^\delta|^2 + \int_D\widehat\beta(u_0^\sigma) 
  +\int_D\widehat\pi(u_0^\sigma)\\
  &+\int_0^t\left(R_\eps^{-1}({\bf w}_\lambda^\sigma+g)(s),B^\sigma(s)\,dW(s)\right)_{H}\\
  &+\int_0^t\sum_{k=0}^\infty\int_D|\nabla R_\eps^{-1}B^\sigma(s)e_k|^2\,ds\\
  &+ \int_0^t\sum_{k=0}^\infty\int_D(\pi'(u_\lambda^\sigma(s))
  +\beta_\lambda'(u_\lambda^\sigma(s)))|R_\eps^{-1}B^\sigma(s)e_k|^2\,ds\,,
\end{align*}
where $(e_k)_k$ is a complete orthonormal system of $U$.
Taking into account the Lipschitz-continuity of $\pi$ and
rearranging the terms, by the Young inequality we infer that, for every $\eta>0$,
\begin{align*}
  &\frac12\int_D|\nabla u_\lambda^\delta(t)|^2 + \int_D\widehat\beta_\lambda(u_\lambda^\sigma(t)) 
  +\int_D\widehat\pi(u_\lambda^\sigma(t)) 
  + \int_{Q_t}\nabla w_\lambda^\sigma\cdot\nabla R_\eps^{-1}w_\lambda^\sigma\\
  &\qquad\lesssim1+\norm{u_0}_{V_1}^2
  + \int_D\widehat\beta(u_0^\sigma)
   + \eta\int_{Q_t}|\nabla R_\eps^{-1}w_\lambda^\sigma|^2
   +\int_Q|\nabla g|^2
   + \int_{Q_t}|\nabla ({\bf w}_\lambda^\sigma - w_\lambda^\sigma)|^2\\
  &\qquad+ \norm{R_\eps^{-1}B}^2_{L^2(0,T; \cL^2(U, V_1))}
  +\sum_{k=0}^\infty\int_{Q_t}
  \beta_\lambda'(u_\lambda^\sigma)|R_\eps^{-1}B^\sigma e_k|^2\\
  &\qquad+\int_0^t(R_\eps^{-1}({\bf w}_\lambda^\sigma+g)(s),B^\sigma(s)\,dW(s))_H
\end{align*}
where the implicit constant is independent of $\lambda$, $\sigma$ and $\eps$.
On the left hand side, a direct computation based on integration by parts and the
definition of $R_\eps^{-1}$ yields
\[
  \int_{Q_t}\nabla w_\lambda^\sigma\cdot\nabla R_\eps^{-1}w_\lambda^\sigma=
  \int_{Q_t}|\nabla R_\eps^{-1}w_\lambda^\sigma|^2 +
   \eps\int_{Q_t}|\Delta R_\eps^{-1}w_\lambda^\sigma|^2\,.
\]
Let us show how to control the stochastic integral.
To this end, note that for every $k\in\enne$
\begin{align*}
  &\ip{R_\eps^{-1}({\bf w}_\lambda^\sigma+g)}{B^\sigma e_k}_{V_1}=
  \ip{({\bf w}_\lambda^\sigma+g)}{R_\eps^{-1}B^\sigma e_k}_{V_1}\\
  &=\ip{R_\eps^{-1}w_\lambda^\sigma-(w_\lambda^\sigma)_D}{B^\sigma e_k}_{V_1}
  +(w_\lambda^\sigma)_D(Be_k)_D
  + \ip{{\bf w}_\lambda^\sigma-w_\lambda^\sigma}{R_\eps^{-1}B^\sigma e_k}_{V_1}+
  \ip{R_\eps^{-1}g}{B^\sigma e_k}_{V_1}\,,
\end{align*}
so that
thanks to the Burkholder-Davis-Gundy, Poicar\'e and Young inequalities, we deduce that,
for every $\eta>0$,
\begin{align*}
  &\E\sup_{r\in[0,t]}\left|
  \int_0^r\ip{R_\eps^{-1}({\bf w}_\lambda^\sigma+g)(s)}{B^\sigma(s)\,dW(s)}_{V_1}\right|^{q/2}\\
  &\lesssim \E\left(\int_0^t\norm{\nabla R_\eps^{-1}w_\lambda^\sigma(s)}^2_{H}
  \norm{B(s)}^2_{\cL^2(U,V_1^*)}\,ds\right)^{q/4}
  +\norm{B}^{q/2}_{L^\infty(\Omega\times(0,T); \cL^2(U,V_1^*))}
  \E\norm{(w_\lambda^\sigma)_D}^{q/2}_{L^2(0,t)} \\
  &\qquad+\E\left(\int_0^t\norm{({\bf w}_\lambda^\sigma - w_\lambda^\sigma)(s)}^2_{V_1}
  \norm{R_\eps^{-1}B(s)}^2_{\cL^2(U,V_1^*)}\,ds\right)^{q/4}\\
  &\qquad+\E\left(\int_0^t\norm{R_\eps^{-1}g(s)}^2_{V_1}
  \norm{B(s)}^2_{\cL^2(U,V_1^*)}\,ds\right)^{q/4}\\
  &\lesssim \eta\E\norm{\nabla R_\eps^{-1}w_\lambda^\sigma}_{L^2(Q_t)}^q
  + \norm{{\bf w}_\lambda^\sigma-w_\lambda^\sigma}^q_{L^q(\Omega;L^2(0,T; V_1))}
  +\norm{R_\eps^{-1}g}^q_{L^q(\Omega; L^2(0,T; V_1))} \\
  &\qquad+ \norm{B}^q_{L^q(\Omega; L^\infty(0,T; \cL^2(U, V_1^*)))}
  +t^{q/4}\norm{B}^{q/2}_{L^\infty(\Omega\times(0,T); \cL^2(U,V_1^*))}\E\norm{(w_\lambda)_D}^{q/2}_{L^\infty(0,t)}\,.
\end{align*}
Taking into account these last computations, 
it is clear that if $B$ takes values in $\cL^2(U, V_{1,0}^*)$ as in \eqref{ip_reg4} then 
we do not have the contribution given by $(w_\lambda)_D$ on the right-hand side.
Consequently, choosing $\eta$ sufficiently small, 
taking supremum in time, power $\frac{q}{2}$ and expectations in It\^o's formula,
rearranging the terms and recalling \eqref{est7},
since $u_0\in L^q(\Omega;V_1)$ 
and $\widehat\beta(u_0)\in L^{q/2}(\Omega; L^1(D))$ we get
\begin{align*}
  &\E\sup_{s\in[0,t]}\norm{u_\lambda^\sigma(s)}_{V_1}^q
  + \E\sup_{s\in[0,t]}\norm{\widehat\beta_\lambda(u_\lambda^\sigma(s))}_{L^1(D)}^{q/2}\\
  &\qquad+ \norm{\nabla R_\eps^{-1}w_\lambda^\sigma}_{L^q(\Omega; L^2(0,t; H))}^q
  +\eps^{q/2}\norm{\Delta R_\eps^{-1}w_\lambda^\sigma}^q_{L^q(\Omega; L^2(0,T; H))}\\
  &\lesssim
  1+\norm{g}^q_{L^q(\Omega; L^2(0,T; V_1))} 
  +\norm{R_\eps^{-1}B}^q_{L^q(\Omega; L^2(0,T; \cL^2(U, V_1)))}
  +\norm{B}^q_{L^q(\Omega; L^\infty(0,T; \cL^2(U, V_1^*)))}\\
  &\qquad+\norm{{\bf w}_\lambda^\sigma-w_\lambda^\sigma}^q_{L^q(\Omega; L^2(0,T; V_1))}
  +t^{q/4}\norm{B}^{q/2}_{L^\infty(\Omega\times(0,T); \cL^2(U,V_1^*))}\E\sup_{s\in[0,t]}|(w_\lambda)_D|^{q/2}\\
  &\qquad+\E\left(\sum_{k=0}^\infty\int_{Q_t}
  \beta_\lambda'(u_\lambda^\sigma)|R_\eps^{-1}B^\sigma e_k|^2\right)^{q/2}\,,
\end{align*}
where again the implicit constant is independent of $\lambda$, $\sigma$ and $\eps$.
Let us estimate now the last term according to the different assumptions
of Theorem~\ref{thm5}: we do not go through the details as the 
argument is similar to the one performed in \cite[\S~5]{scar-SCH}.
Under assumption \eqref{ip_reg1}, we can write 
$R_\eps^{-1}B=B^\eps_1+B^\eps_2$ for some
$B_1^\eps\in L^\infty(\Omega\times(0,T); \cL^2(U, H))$ and 
$B_2^\eps \in L^q(0,T; L^\infty(\Omega; \cL^2(U, V_1)))$. Hence, using the fact that 
$V_2\embed L^\infty(D)$ and that $\beta'$ has quadratic growth, we get
\[
  \E\left(\sum_{k=0}^\infty\int_{Q_t}
  \beta_\lambda'(u_\lambda^\sigma)|(B_1^\eps)^\sigma e_k|^2\right)^{q/2}
  \lesssim\left(1+\norm{u_\lambda}^q_{L^q(\Omega; L^2(0,T;V_2))}\right)
  \norm{B_1^\eps}^q_{L^\infty(\Omega\times(0,T); \cL^2(U,H))}\,,
\]
while by the H\"older inequality and
the fact that $V_{\frac{N}6}\embed L^3(D)$
\begin{align*}
  &\E\left(\sum_{k=0}^\infty\int_{Q_t}
  \beta_\lambda'(u_\lambda^\sigma)|(B_2^\eps)^\sigma e_k|^2\right)^{q/2}\\
  &\qquad\lesssim\norm{B_2^\eps}^q_{L^q(\Omega;L^2(0,T;\cL^2(U,H)))} 
  +\E\left(\int_0^t\norm{u_\lambda^\sigma(s)}^2_{L^6(D)}
  \sum_{k=0}^{\infty}\norm{B_2^\eps(s)e_k}^2_{L^3(D)}\,ds\right)^{q/2}\\
  &\qquad\lesssim\norm{B_2^\eps}^q_{L^q(\Omega; L^2(0,T;\cL^2(U,H)))} +
  \int_0^t\norm{B_2^\eps(s)}^q_{L^\infty(\Omega; \cL^2(U,V_{N/6}))}
  \E\sup_{r\leq s}\norm{u^\sigma_\lambda(r)}^q_{V_1}\,ds\,.
\end{align*}
Otherwise, if \eqref{ip_reg2} is in order, 
using the fact that $V_s\embed L^\infty(D)$ for $s>\frac{N}2$ thanks to the Sobolev embeddings,
by the growth assumption on $\beta'$
we have that
\begin{align*}
  &\E\left(\sum_{k=0}^\infty\int_{Q_t}\beta_\lambda'(u_\lambda^\sigma)
  |R_\eps^{-1}B^\sigma e_k|^2\,ds\right)^{q/2}
  \lesssim \norm{R_\eps^{-1}B}_{L^q(\Omega;L^2(0,T;\cL^2(U,H)))}^q\\
  &\qquad+
  \int_0^t\norm{R_\eps^{-1}B(s)}^q_{L^\infty(\Omega; \cL^2(U,V_s))}
  \E\sup_{r\leq s}\left(\int_D\widehat\beta_\lambda(u_\lambda^\sigma(r))\right)^{q/2}\,ds\,.
\end{align*}
Hence, it is clear that in both cases the terms on the right-hand side can be handled 
using the Gronwall lemma and the terms on the left-hand side.
Finally, recalling the definition of ${\bf w}^\sigma_\lambda$, by the growth assumption on $\beta$
and the Lipschitz-continuity of $\pi$ we have
\begin{align*}
  (w_\lambda^\sigma)_D&= ({\bf w}^\sigma_\lambda)_D + (w_\lambda^\sigma-{\bf w}^\sigma_\lambda)_D=
  \left(\beta_\lambda(u_\lambda^\sigma)\right)_D + (\pi(u^\sigma_\lambda))_D - g_D
  + (w_\lambda^\sigma-{\bf w}^\sigma_\lambda)_D\\
  &\lesssim 1+ \int_D\widehat\beta_\lambda(u^\sigma_\lambda) + \norm{u_\lambda^\sigma}_H^2
  +\norm{g}_H^2 + \norm{w_\lambda^\sigma-{\bf w}^\sigma_\lambda}_H^2\,.
\end{align*}
Going back then to It\^o's inequality and using the Gronwall lemma, we deduce that there exists 
$T_0\in(0,T]$ sufficiently small, independent of $\lambda$, $\sigma$ and $\eps$, such that 
\begin{align*}
  &\E\sup_{s\in[0,T_0]}\norm{u_\lambda^\sigma(s)}_{V_1}^q 
  + \E\sup_{s\in[0,T_0]}\norm{\widehat\beta_\lambda(u_\lambda^\sigma(s))}_{L^1(D)}^{q/2}\\
  &\qquad+\E\sup_{s\in[0,T_0]}|(w_\lambda^\sigma)_D|^{q/2}
  + \norm{\nabla R_\eps^{-1}w_\lambda^\sigma}_{L^q(\Omega; L^2(0,T_0; H))}^q
  +\eps^{q/2}\norm{\Delta R_\eps^{-1}w_\lambda^\sigma}^q_{L^q(\Omega; L^2(0,T; H))}\\
  &\lesssim C_\eps\left(
  1+\norm{g}^q_{L^q(\Omega;L^2(0,T; V_1))} 
  +\norm{R_\eps^{-1}B}^q_{L^q(\Omega; L^2(0,T; \cL^2(U, V_1)))}\right.\\
  &\qquad+\norm{B}^q_{L^q(\Omega; L^\infty(0,T; \cL^2(U, V_1^*)))}
  \left.+\norm{{\bf w}_\lambda^\sigma-w_\lambda^\sigma}^q_{L^q(\Omega;L^2(0,T; V_1))}\right)\,,
\end{align*}
where $C_\eps$ is a positive constant, independent of $\lambda$ and $\sigma$, depending only 
on the norms of $R_\eps^{-1}B$ in the spaces given by the assumptions \eqref{ip_reg1} or \eqref{ip_reg2}.

Now, let us fix $\eps,\lambda>0$: the only dependence on $\sigma$ is contained in 
the last term on the right-hand side. In particular, we have
\[
  {\bf w}^\sigma_\lambda-w^\sigma_\lambda
  =\beta_\lambda(u_\lambda)^\sigma - \beta_\lambda(u_\lambda^\sigma)
  +\pi(u_\lambda)^\sigma - \pi(u_\lambda^\sigma)
  +g^\sigma - g.
\]
By the regularity of $g$ we have that $g^\sigma\to g$ in $L^q(\Omega;L^2(0,T; V_1))$,
while the Lipschitz-continuity of $\beta_\lambda$ and $\pi$ imply that 
${\bf w}^\sigma_\lambda-w^\sigma_\lambda\to 0$ in $L^q(\Omega; L^2(0,T; H))$, as $\sigma\searrow0$.
Furthermore since $\lambda$ is fixed, it is not difficult to check that 
$\beta_\lambda(u_\lambda^\sigma)\to\beta_\lambda(u_\lambda)$ in $L^q(\Omega; L^2(0,T; V_1))$
provided that $\beta_\lambda\in C^1_b(\erre)$: in general this is not granted by the definition 
of Yosida approximation. However, it can be obtained by a further regularization
on the problem (for example considering a smoothed version of the Yosida approximation
which preserves monotonicity). Since we are still arguing with $\lambda$ fixed, 
a further approximation would not be restrictive, hence  we omit it for brevity.
A similar argument holds for the term in $\pi$.
Taking these remarks into account and letting $\sigma\searrow0$, we get
by lower semicontinuity 
\begin{align*}
  &\E\sup_{s\in[0,T_0]}\norm{u_\lambda(s)}_{V_1}^q
  + \E\sup_{s\in[0,T_0]}\norm{\widehat\beta_\lambda(u_\lambda(s))}_{L^1(D)}^{q/2}\\
  &\qquad+\E\sup_{s\in[0,T_0]}|(w_\lambda)_D|^{q/2}
  + \norm{\nabla R_\eps^{-1}w_\lambda}_{L^q(\Omega; L^2(0,T_0; H))}^q
  +\eps^{q/2}\norm{\Delta R_\eps^{-1}w_\lambda^\sigma}^q_{L^q(\Omega; L^2(0,T; H))}\\
  &\lesssim C_\eps\left(
  1+\norm{R_\eps^{-1}B}^2_{L^q(\Omega; L^2(0,T; \cL^2(U, V_1)))}
  \right)\,,
\end{align*}
with implicit constant independent of $\lambda$ and $\eps$, and $C_\eps$ as before.
Recalling that $T_0$ is independent of both $\lambda$ and $\eps$, by a classical patching 
argument we infer that
\begin{gather*}
  \norm{u_\lambda}^q_{L^q(\Omega; L^\infty(0,T; V_1))} 
  +\norm{\widehat\beta(u_\lambda)}_{L^{q/2}(\Omega; L^\infty(0,T; L^1(D)))}\leq M_\eps\,,\\
  \norm{(w_\lambda)_D}_{L^{q/2}(\Omega; L^\infty(0,T))}
  +\norm{\nabla R_\eps^{-1}w_\lambda}^q_{L^q(\Omega; L^2(0,T; H))}
  +\eps^{q/2}\norm{\Delta R_\eps^{-1}w_\lambda^\sigma}^q_{L^q(\Omega; L^2(0,T; H))}\leq M_\eps\,,
\end{gather*}
where $M_\eps>0$ only depends on the norms of $R_\eps^{-1}B$
in the spaces given the respective assumptions in Theorem~\ref{thm5}.
Recalling that $\norm{\cdot}_{1}$ is an equivalent norm in $V_1$, 
we deduce in particular that 
\[
  \norm{R_\eps^{-1}w_\lambda}_{L^{q/2,q}(\Omega; L^2(0,T; V_1))}\leq M_\eps\,.
\]
Moreover, using the growth assumption on $\beta$, we also deduce by comparison that 
\[
  \norm{\beta_\lambda(u_\lambda)}_{L^{q/2}(\Omega; L^\infty(0,T; L^1(D)))\cap 
  L^{q/2}(\Omega; L^2(0,T; H))}\leq M_\eps\,.
\]
Completing now the proof of existence as in Section~\ref{sec:WP} taking into account the 
estimates above yields the desired regularity result.

In order to prove the result for the pure case,
it is immediate to check that if $\eps=0$ then \eqref{ip_reg3}--\eqref{ip_reg2}
imply that $M_\eps$ is uniformly bounded in $\eps$, so that we can conclude easily 
thanks to the convergence result in Theorem~\ref{thm3}.

Finally, let us prove the last sentence of Theorem~\ref{thm5}.
By the results already proved, we know in particular that $u\in L^\infty(0,T; V_1)$
$\P$-almost surely. Hence, if \eqref{ip_reg1} is in order we have that 
$\beta_\lambda(u)\in L^\infty(0,T; V_1)$ and 
\[
  |\nabla\beta_\lambda(u)|=\beta_\lambda'(u)|\nabla u| \lesssim (1+|u|^2)|\nabla u|\,,
\] 
from which the H\"older inequality and the Sobolev embedding theorems yield
\begin{align*}
  \norm{\nabla\beta_\lambda(u)}_{L^2(0,T; H)}^2&\lesssim1+\int_Q|u|^4|\nabla u|^2
  \leq 1+\int_0^T\norm{|u|^4}_{L^{3/2}(D)}\norm{|\nabla u|^2}_{L^3(D)}\\
  &\leq1+\int_0^T\norm{u}^4_{L^6(D)}\norm{\nabla u}^2_{L^6(D)}
  \lesssim1+\norm{u}^4_{L^\infty(0,T; V_1)}\norm{u}^2_{L^2(0,T; V_2)}\,,
\end{align*}
where the right-hand side is finite $\P$-almost surely. We deduce by H\"older inequality that 
the family $(\nabla \beta_\lambda(u(\omega)))_\lambda$
is uniformly bounded in $L^{q/3}(\Omega; L^2(0,T; H))$, 
which implies that $\nabla\xi \in L^{q/3}(\Omega; L^2(0,T; H))$,
so that $\xi \in L^{q/3}(\Omega; L^2(0,T; V_1))$ since $\frac{q}2>\frac{q}3$.
Furthermore, if $\eps=0$, we have already proved that $w\in L^{q/3}(\Omega; L^2(0,T; V_1))$,
and $g\in L^{q/3}(\Omega; L^2(0,T; V_1))$ as a consequence of
assumption \eqref{reg3}. In addition, since $\pi$ is Lipschitz-continuous
we also have that $\pi(u)\in L^{q/3}(\Omega; L^2(0,T; V_1))$.
Hence, by comparison in the equation we infer that
$-\Delta u \in L^{q/3}(\Omega; L^2(0,T; V_1))$, from which
the thesis follows by elliptic regularity.


\bibliographystyle{abbrv}

\def\cprime{$'$}

\end{document}